\pgfplotsset{width=10cm,compat=1.9}
    \newcolumntype{L}{>{\raggedright\arraybackslash}X}
    \newcolumntype{R}{>{\raggedleft\arraybackslash}X}
\theoremstyle{definition}
\theoremstyle{plain}
\theoremstyle{definition}
\theoremstyle{plain}
\theoremstyle{plain}
\theoremstyle{plain}
\theoremstyle{definition}
\newtheorem*{theorem*}{Main result}
\newcommand{\rawdiaplus}{%
  \begin{tikzpicture}
    \useasboundingbox (-0.7ex, -0.9ex) rectangle (0.7ex, 0.9ex);
    \node (w) at (-0.7ex,0) {};
    \node (e) at (+0.7ex,0) {};
    \node (s) at (0,-0.9ex) {};
    \node (n) at (0,+0.9ex) {};
    \draw (n.center) -- (e.center) -- (s.center) -- (w.center) -- (n.center);
    \draw (n.center) -- (s.center);
    \draw (e.center) -- (w.center);
  \end{tikzpicture}}
\newsavebox{\diamondplusbox}
\savebox{\diamondplusbox}{\rawdiaplus}
\newcommand{\rawdiaminus}{%
  \begin{tikzpicture}
    \useasboundingbox (-0.7ex, -0.9ex) rectangle (0.7ex, 0.9ex);
    \node (w) at (-0.7ex,0) {};
    \node (e) at (+0.7ex,0) {};
    \node (s) at (0,-0.9ex) {};
    \node (n) at (0,+0.9ex) {};
    \draw (n.center) -- (e.center) -- (s.center) -- (w.center) -- (n.center);
    \draw (e.center) -- (w.center);
  \end{tikzpicture}}
\newsavebox{\diamondminusbox}
\savebox{\diamondminusbox}{\rawdiaminus}
\newcommand{\Var}{\mathbb{V}}
\newcommand{\VP}{\mathbb{VPE}}
\newcommand{\VPE}{\mathbb{VPE}}
\newcommand{\tudparagraph}[2]{%
\vspace*{#1}
\noindent
{\bf #2}
}
\newcommand{\cE}{\mathcal{E}}
\newcommand{\cM}{\mathcal{M}}
\newcommand{\eqdef}{\ensuremath{\stackrel{\text{\tiny def}}{=}}}
\newcommand{\Ende}{\hfill ${\scriptscriptstyle \blacksquare}$}
\renewcommand{\Pr}{\mathrm{Pr}}
\newcommand{\after}[2]{\residual{#1}{#2}}
\newcommand{\sinit}{s_{\mathit{\scriptscriptstyle init}}}
\newcommand{\Act}{\mathit{Act}}
\newcommand{\act}{\alpha}
\newcommand{\fpath}{\pi}
\newcommand{\last}{\mathit{last}}
\newcommand{\prefix}[2]{\mathit{pref}(#1,#2)}
\newcommand{\Cyl}{\mathit{Cyl}}
\newcommand{\sched}{\mathfrak{S}}
\newcommand{\tsched}{\mathfrak{T}}
\newcommand{\msched}{\mathfrak{M}}
\newcommand{\vsched}{\mathfrak{V}}
\newcommand{\rsched}{\mathfrak{R}}
\newcommand{\Min}{\mathfrak{Min}}
\newcommand{\residual}[2]{#1 {\uparrow} {#2}}
\newcommand{\wgt}{\mathit{wgt}}
\newcommand{\goal}{\mathit{goal}}
\newcommand{\Rational}{\mathbb{Q}}
\newcommand{\CiteAppendix}[1]{}
\newcommand{\MeanPayoff}{\mathbb{MP}}
\author{Jakob Piribauer}{Technische Universit\"at Dresden, Germany}{jakob.piribauer@tu-dresden.de}{0000-0003-4829-0476}{}
\author{Ocan Sankur}{Univ Rennes, Inria, CNRS, IRISA, France}{ocan.sankur@irisa.fr}{0000-0001-8146-4429}{}
\author{Christel Baier}{Technische Universit\"at Dresden, Germany}{Christel.Baier@tu-dresden.de}{0000-0002-5321-9343}{}
\authorrunning{ J. Piribauer,  O. Sankur, and C. Baier}
\keywords{Markov decision process, variance, stochastic shortest path problem}
\begin{document}


\title{The variance-penalized stochastic shortest path problem}

\maketitle

\begin{abstract}

The stochastic shortest path problem (SSPP) asks to resolve 
the non-deterministic choices in a Markov decision process (MDP) such that the expected accumulated weight  before reaching a target state is maximized.
This paper addresses the optimization of the variance-penalized expectation (VPE) of the accumulated weight, which is a variant of the SSPP in which a multiple of the  variance of accumulated weights is incurred as a penalty. It is shown that the optimal  VPE in MDPs with non-negative weights as well as an optimal deterministic finite-memory scheduler can be computed in exponential space.
The threshold problem whether the maximal VPE exceeds a given rational is shown to be EXPTIME-hard and to lie in NEXPTIME.
Furthermore, a result of interest in its own right obtained on the way is that a variance-minimal scheduler among all expectation-optimal schedulers can be computed in polynomial time.

\end{abstract}


\section{Introduction}

Markov decision processes (MDPs) are a standard operational model comprising randomization and non-determinism and are widely used  in verification, articifical intelligence, robotics, and operations research. 
In each state of an MDP, there is a non-deterministic choice from a set of actions.  Each action is equipped with a weight and a probability distribution according to which the successor state is chosen randomly. 
In  the analysis of systems modelled as MDPs, one typically  is interested in the worst- or best-case behavior, where worst and best case range over all resolutions of the non-determinism.
So, the resulting algorithmic problems on MDPs  usually  ask to resolve non-deterministic choices by specifying a \emph{scheduler}  such that the resulting probabilistic behavior is optimized with respect to an objective function.
If the weights are used to model one of various quantitative aspects of a system such as 
costs, resource consumption, rewards, or utility, a frequently encountered such optimization problem is the 
 \emph{stochastic shortest path problem} (SSPP) \cite{BerTsi91,deAlf99}.
It asks to optimize the expected value of the accumulated weight before reaching a target state.
Example applications include the analysis of worst-case expected termination times of probabilistic programs or finding the optimal controls in a motion planning scenario with random external influences.

While a solution to the SSPP provides guarantees on the behavior of a system in all environments or indicates the optimal control to maximize expected rewards, it completely disregards all other aspects of the resulting probability distribution of the accumulated weight besides the expected value.
In almost all practical applications, however, the uncertainty coming with the probabilistic behavior cannot be neglected.
In traffic control systems or energy grids, for example, large variability in the throughput comes at a high cost due to the risk of traffic jams or the difficulty of storing surplus energy. Also a probabilistic program employed in a complex environment might be of more use with a higher  expected termination time in exchange for a lower chance of extreme termination times.

To overcome these shortcomings of the SSPP, various additional optimization problems have been studied in the literature:
Optimizing conditional expected accumulated weights under the condition that certain system states are reached allows for a more fine-grained system analysis by making it possible to determine the worst- or best-case expectation in different scenarios   \cite{tacas2017,fossacs2019}.
Given a probability $p$, quantiles on the accumulated weight in MDPs, also called \emph{values-at-risk} in the context of risk analysis, are the best bound $B$ such that the accumulated weight exceeds $B$ with probability at  most $p$ in the worst or best case \cite{HaaseKiefer15,UB13}. 
The \emph{conditional value-at-risk} and the \emph{entropic value-at-risk} are more involved measures that have been studied in this context \cite{ahmadi2021,kretinsky2018}. They quantify how far the probability mass of  the tail of the probability distribution lies above the {value-at-risk}.
The arguably most prominent measure for the deviation of a random variable from its expected value is the \emph{variance}. The computation of the variance of accumulated weights has been studied in Markov chains \cite{verhoeff2004reward} and in MDPs \cite{mandl1971variance,MannorTsitsiklis2011}. The investigations of variance in MDPs in the literature is discussed in more detail in the `Related Work' section below.

\tudparagraph{1ex}{Variance-penalized expectation (VPE).}
In this paper, we investigate a variant of the SSPP in which the costs caused by probabilistic uncertainty are priced in to the objective function:
We study the optimization of the \emph{variance-penalized expectation} (VPE), a well-known measure that combines the expected 
value $\mu$ and the variance $\sigma^2$ into the single objective function $\mu - \lambda \cdot \sigma^2$ where  $\lambda$ is a parameter that can be varied to aim for different tradeoffs between expectation and variance.
In the context of optimization problems on MDPs, the VPE has been studied, e.g., in \cite{filar1989variance,collins1997finite}.

Furthermore, the VPE   finds use in an area of research primarily concerned with the tradeoffs between expected performance and risks, namely, the theory of financial markets and investment decision-making:
In 1952, 
Harry Markowitz introduced \emph{modern portfolio theory}  that evaluates portfolios in terms
of expected returns and variance of the returns  \cite{markowitz52},  for which he was later awarded the Nobel Prize in economics. 
 A portfolio lies on the \emph{Markowitz efficient frontier} if the expected return cannot be increased without increasing the variance and, vice versa,  the variance cannot be decreased without decreasing the expectation. The final choice  of a portfolio on the efficient frontier depends on the investors preferences.
 In this context, the VPE $\mu - \lambda \cdot \sigma^2$ is a simple, frequently used way to express the preference of an investor using the single parameter $\lambda$ capturing the risk-aversion of the investor (see, e.g., \cite{goetzmann2014modern}).
 In more involved accounts, the investor's preference  is described in terms of a utility function mapping returns to utilities. For the commonly used exponential utility function $u(x)=-e^{-\alpha x}$ and normally distributed returns, the objective of an investor trying to maximize expected utility turns  out to be equivalent   to the maximization  of the VPE with parameter $\lambda=\alpha/2$ \cite{arrow70,pratt64}.

For an illustration of the VPE, consider the following example:

\begin{figure}[t]
  \begin{subfigure}[b]{.5\textwidth}
    \resizebox{1\textwidth}{!}{%
      \begin{tikzpicture}[scale=1,auto,node distance=8mm,>=latex]
        \tikzstyle{round}=[thick,draw=black,circle]

        \node[round, draw=black,minimum size=10mm] (sinit) {$\sinit$};
        \node[above=35mm of sinit] (input) {};
        \node[round,right=35mm of input,minimum size=10mm] (d) {$d$};
        \node[round, right=10mm of input, minimum size=10mm] (c) {$c$};
        \node[round, left=10mm of input, minimum size=10mm] (b) {$b$};
        \node[round,left=35mm of input,minimum size=10mm] (a) {$a$};
        
        \node[round,above=35mm of input,minimum size=10mm] (goal) {$\goal$};

        \draw[color=black ,->,very thick] (sinit) edge  node [pos=0.5,left] {$\alpha$} (a) ;
        \draw[color=black ,->,very thick] (sinit) edge  node [pos=0.5,left] {$\beta$} (b) ;
         \draw[color=black ,->,very thick] (sinit) edge  node [pos=0.5,right] {$\delta$} (d) ;
        \draw[color=black ,->,very thick] (sinit) edge  node [pos=0.5,right] {$\gamma$} (c) ;

        \draw[color=black ,->,very thick] (a) edge [bend left] node [pos=0.16,left] {$+0$} (goal) ;

        \draw[color=black ,->,very thick] (b)  edge  node [very near start, anchor=center] (m5) {} node [pos=0.2,right] {$2/3$} (goal) ;
        \draw[color=black ,->,very thick] (b) edge [loop, out=120, in=180, min distance=15mm]  node [very near start, anchor=center] (m6) {} node [pos=0.3,below] {$1/3$} (b) ;
        \draw[color=black , very thick] (m5.center) edge [bend right=35] node [pos=0.5,above=1pt] {$+1$} (m6.center);

        \draw[color=black ,->,very thick] (c)  edge  node [very near start, anchor=center] (h5) {} node [pos=0.2,left] {$9/10$} (goal) ;
        \draw[color=black ,->,very thick] (c) edge [loop, out=60, in=0, min distance=15mm]  node [very near start, anchor=center] (h6) {} node [pos=0.27,below=2pt] {$1/10$} (c) ;
        \draw[color=black , very thick] (h5.center) edge [bend left=35] node [pos=0.5,above=1pt] {$+3$} (h6.center);
        
        \draw[color=black ,->,very thick] (d)  edge [bend right] node [pos=.08, anchor=center] (l5) {} node [pos=0.16,left] {$3/4$} (goal) ;
        \draw[color=black ,->,very thick] (d) edge [loop, out=60, in=0, min distance=15mm]  node [very near start, anchor=center] (l6) {} node [pos=0.3,below] {$1/4$} (d) ;
        \draw[color=black , very thick] (l5.center) edge [bend left=35] node [pos=0.5,above=1pt] {$+3$} (l6.center);

      \end{tikzpicture}
    }
  \end{subfigure}
  \hspace{-12pt}
  \begin{subfigure}[b]{.5\textwidth}
    \begin{tikzpicture}[yscale=0.8,xscale=.8]
      \begin{axis}[
          axis lines = middle,
          xlabel={$\mu$},
          ylabel={$\sigma^2$},
          ymin=-1, ymax=6.5,
          xmin=-.2, xmax=5,
          xticklabels={0,1,2,3,4},
          yticklabels={0,1,2,3,4,5,6},
          xtick={0,1,2,3,4},
          ytick={0,1,2,3,4,5,6},
        ]
        \addplot [name path=A,domain=0:1.5,
          samples=300,
          color=black,thick]
        {0.5*x+x*(1.5-x)};
         \addplot [name path=D,domain=0:1.5,
          samples=300,
          color=black,thick]
        {x+x*(4-x)};
        \addplot [name path=B,domain=1.5:10/3,
          samples=300,
          color=black,thick]
        {.75+(x-1.5)*13/66+(x-1.5)*(10/3-x)};
         \addplot [name path=E,domain=1.5:10/3,
          samples=300,
          color=black,thick]
        {x+x*(4-x)};
        \addplot [name path=C,domain=10/3:4,
          samples=300,
          color=black, thick]
        {10/9+(x-10/3)*13/3+(x-10/3)*(4-x)};
        \addplot [name path=F,domain=10/3:4,
          samples=300,
          color=black,thick]
        {x+x*(4-x)};

        \addplot[black!8] fill between[of=A and D];
        \addplot[black!8] fill between[of=B and E];
        \addplot[black!8] fill between[of=C and F];

        \node[label={315:{$\alpha$}},circle,fill,inner sep=2pt] at (axis cs:0,0) {};
        \node[label={270:{$\beta$}},circle,fill,inner sep=2pt] at (axis cs:1.5,.75) {};
        \node[label={270:{$\gamma$}},circle,fill,inner sep=2pt] at (axis cs:10/3,10/9) {};
        \node[label={270:{$\delta$}},circle,fill,inner sep=2pt] at (axis cs:4,4) {};

        \draw[dashed,thick,color=blue] (axis cs:2.222,0) -- (axis cs:5,2.777);
      \draw[thick,color=blue,->] (axis cs:3.666,1.444) -- (axis cs:4,.5);
      \node[label={0:{\textcolor{blue}{$\mu-1\cdot \sigma^2$}}}] at (axis cs:3.7,1.3) {};
      \end{axis}
    \end{tikzpicture}
  \end{subfigure}
  \caption{The left hand side shows the  MDP $\cM$ for Example \ref{ex:intro}. On the right hand side, all possible combinations of expected accumulated weight and variance for schedulers for $\cM$ are depicted. The points corresponding to the four deterministic schedulers are marked by the corresponding action. Furthermore, the blue line indicates  all points at which $\mu-1\cdot \sigma^2=20/9$ and the arrow indicates the direction in which the value of this objective function increases.}
  \label{fig:ex_intro}
\end{figure}
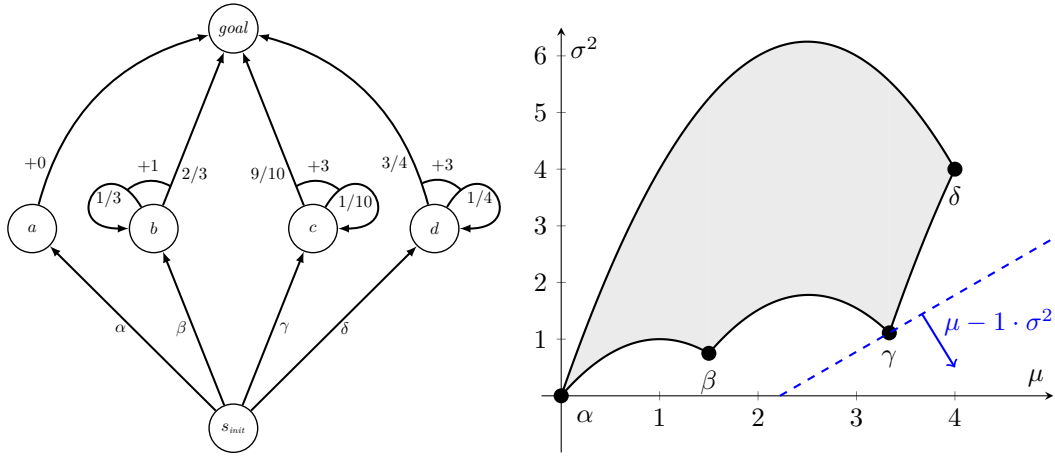

\begin{example}\label{ex:intro}
  Consider the MDP $\cM$ depicted in Figure \ref{fig:ex_intro} where non-trivial probability values as well as the weights accumulated are denoted next to the transitions. We want to analyze the possible trade-offs between the variance and the expected value of the accumulated weight that we can achieve in this MDP.

  The only non-deterministic choice is in the state $\sinit$. Choosing action $\alpha$ leads to $\goal$ with expected weight and variance $0$. For the remaining actions, the accumulated weight follows a geometric distribution where in each step some weight $k$ is accumulated and $\goal$ is reached with some probability $p$ after the step.
  For such a distribution, it is well-known that the expected accumulated weight is $k/p$ and the variance is $(k/p)^2\cdot (1-p)$.
  Plugging in the respective values for the distributions reached after actions $\beta$, $\gamma$, and $\delta$, we obtain the pairs of expectations and variances as depicted on the right-hand side of Figure \ref{fig:ex_intro}.
  In particular, choosing $\gamma$ leads to an expectation of $10/3$ and a variance of $10/9$.

 Making use of randomization over two different actions $\tau$ and $\sigma$ with probability $p$ and $1-p$, respectively, for some $p\in(0,1)$, we will see in Remark \ref{rem:parabolic} in Section \ref{sec:VPE} that the expected values and variances under the resulting schedulers lie on a parabolic line segment depicted in black that is uniquely determined by the expected values and variances  under $\tau$ and $\sigma$.  
 By further randomization over multiple actions, combinations of expectation and variance in the gray region 
 in Figure \ref{fig:ex_intro}  can be realized.
 
 Consider now the VPE with parameter $\lambda=1$. The dashed blue line in Figure \ref{fig:ex_intro}
 marks all points at which $\mu- 1\cdot \sigma^2=20/9$. The arrow indicates in which direction the value of the VPE increases.
 So, it turns out that choosing action $\gamma$ maximizes the VPE in this case; the slightly lower expectation compared to $\delta$ is compensated by a significantly lower variance.
 Geometrically, we can observe that the optimal point for the VPE for any parameter will always  lie on the border of the convex hull of the region of feasible points in the $\mu$-$\sigma^2$-plane as the VPE is a linear function of expectation and variance. 
 For varying values of $\lambda$,  also $\alpha$ (for $\lambda \geq 3$) and
  $\delta$ (for $\lambda\leq 1/13$) can constitute the optimal choice in $\sinit$ for the maximization of the VPE, while $\beta$ is not optimal for any choice of $\lambda$ as it lies in the interior of the convex hull of the feasible region.
  The results of  Section \ref{sec:VPE} will show that in general, the optimal point for the VPE can be achieved by a deterministic finite-memory scheduler.
  \Ende
\end{example}


\tudparagraph{1ex}{Contribution.} 
The main results of this paper are the following:
\begin{enumerate}
\item
Among all schedulers that optimize the expected accumulated weight before reaching a target, a variance-minimal scheduler can be computed in polynomial time and chosen to be memoryless and deterministic (Section \ref{sec:minimal_variance}).
\item
The maximal VPE in MDPs with non-negative weights can be computed in exponential space.
The maximum is obtained by a deterministic scheduler that can be computed in exponential space as well (Section \ref{sec:VPE}).
As memory, an optimal scheduler only needs to keep track of the accumulated weight up to a bound computable in polynomial time. As soon as the bound is reached, optimal schedulers can switch to the behavior of a variance-minimal scheduler among the expectation-minimal schedulers  that can be computed by result 1.
\item
The threshold problem whether the maximal VPE is greater or equal to a rational $\vartheta$ is in NEXPTIME and EXPTIME-hard (Section \ref{sec:VPE}).
\end{enumerate}

\tudparagraph{1ex}{Related work.}
\indent\textit{Accumulated rewards.}
In \cite{mandl1971variance}, a characterization
of variance-minimal scheulders among the schedulers maximizing the expected accumulated weight in MDPs is given.
Here, we provide a simpler proof based on the calculations of \cite{verhoeff2004reward}; we moreover show
how to compute such schedulers in polynomial time. \cite{mandl1971variance} also contains hints for a similar
characterization of discounted reward, and developments for mean payoff.
Another closely related work is \cite{MannorTsitsiklis2011} which study the following multi-objective problem for the accumulated weight in finite-horizon MDPs:
given $\eta,\nu$ is there a scheduler achieving an expectation of at least~$\eta$, and a variance of at most~$\nu$?
This problem is shown to be NP-hard,
and exact pseudo-polynomial time algorithm is given for the existence of a scheduler with expectation $\eta$ and variance $\leq \nu$.
Furthermore, pseudo-polynomial approximation algorithms are given for optimizing the expectation under a constraint on the variance,
and optimizing the variance under a constraint on the expectation.

\textit{Discounted rewards.}
In \cite{Jaquette-1973}, the author proves that memoryless \emph{moment-optimal} schedulers exist for the discounted reward, that is, schedulers
that maximize the expectation, minimize the variance, maximize the third moment, and so on. Moreover, an algorithm is described
to compute such schedulers.
In \cite{sobel1982}, a formula for the variance of the discounted reward is given for memoryless schedulers and for the finite-horizon case,
in MDPs and semi-MDPs.
Variance-minimal schedulers among those maximizing the expected discounted reward until a target set is reached
are studied in \cite{wu_guo_2015} for MDPs with varying discount factors.
\cite{xia2018mean} presents a policy iteration algorithm to minimize variance of the discounted weight among schedulers achieving
an expectation equal to a given constant.
%

\textit{Mean payoff.}
For mean payoff objectives, variance was studied in \cite{sobel1994} for memoryless strategies, and
algorithms were given to compute schedulers that achieve given bounds on the expectation and the variance~\cite{brazdil2017trading}.
The latter paper also considers the minimization of the variability, which is the average of the squared differences between the expected mean-payoff and each observed one-step reward.
In \cite{kurano1987markov}, the author considers optimizing the expected mean payoff and the average variance.
Average variance is defined as the limsup of the variances of the partial sums. They show how to minimize average variance among $\epsilon$-optimal strategies for the expected mean payoff.
Policy iteration algorithms were given in \cite{xia2016optimization,xia2018variance} to minimize variance or variability of the mean payoff (without  constraints on the expectation).

\textit{Variance-penalized expectation.}
The VPE was studied for finite-horizon  MDPs with terminal rewards in \cite{collins1997finite}.
In \cite{filar1989variance}, this notion was studied for the expectation and the variability of both mean payoff and discounted rewards.
\cite{xia2020risk} presents a policy iteration algorithm converging against \emph{local} optima for a similar measure.

\section{Preliminaries}\label{sec:prelim}

We give basic definitions and present our notation (for  details, see, e.g., \cite{Puterman}). Afterwards, we provide auxiliary results on expected frequencies  used in the subsequent sections.

\subsection{Notation and definitions}

\tudparagraph{.3ex}{Notations for Markov decision processes.}
A \emph{Markov decision process} (MDP) is a tuple $\mathcal{M} = (S,\Act,P,\sinit,\goal,\wgt)$
where $S$ is a finite set of states,
$\Act$ a finite set of actions,
$P \colon S \times \Act \times S \to [0,1] \cap \Rational$  the
transition probability function,
$\sinit \in S$ the initial state, $\goal\in S$ a designated target state,
and
$\wgt \colon S \times \Act \to \mathbb{Z}$ the weight function.
We require that
$\sum_{t\in S}P(s,\act,t) \in \{0,1\}$
for all $(s,\alpha)\in S\times \Act$.
We say that action $\alpha$ is enabled in state $s$ iff $\sum_{t\in S}P(s,\act,t) =1$ and denote the set of all actions that are enabled in state $s$ by $\Act(s)$.
In this paper, for all MDPs, we assume that $\goal$ is the only \emph{trap} state in which no actions are enabled,
that $\goal$ is reachable from all other states $s$,  and
that all states are reachable from $s_{\mathit{init}}$.
The paths of $\cM$ are finite or
infinite sequences $s_0 \, \act_0 \, s_1 \, \act_1  \ldots$
where states and actions alternate such that
$P(s_i,\act_i,s_{i+1}) >0$ for all $i\geq0$.
For $\fpath =
    s_0 \, \act_0 \, s_1 \, \act_1 \,  \ldots \act_{k-1} \, s_k$,
$\wgt(\fpath)=
    \wgt(s_0,\act_0) + \ldots + \wgt(s_{k-1},\act_{k-1})$
denotes the accumulated weight of $\pi$,
$P(\fpath) =
    P(s_0,\act_0,s_1)
    \cdot \ldots \cdot P(s_{k-1},\act_{k-1},s_k)$
its probability, and
$\last(\fpath)=s_k$ its last state. A path is called \emph{maximal} if it is infinite or ends in the trap state $\goal$.
The \emph{size} of $\cM$
is the sum of the number of states
plus the total sum of the logarithmic lengths of the non-zero
probability values
$P(s,\alpha,s')$ as fractions of co-prime integers and the weight values $\wgt(s,\alpha)$.

An \emph{end component} of $\cM$ is a strongly connected sub-MDP formalized by a subset $S^\prime\subseteq S$ of states and a non-empty subset $\mathfrak{A}(s)\subseteq \Act(s)$  for each state $s\in S^\prime$ such that for each $s\in S^\prime$, $t\in S$ and $\alpha\in \mathfrak{A}(s)$ with $P(s,\alpha,t)>0$, we have $t\in S^\prime$ and such that in the resulting sub-MDP all states are reachable from each other.
An end-component is a $0$-end-component if
it only contains cycles whose accumulated weight is $0$ (so-called $0$-cycles) so that the accumulated weight is bounded on all (infinite) paths in the end component.
We will further use the \emph{mean payoff} measure as tool to classify end-components.
For an infinite path $\zeta$, the mean payoff is defined as $\MeanPayoff(\zeta) = \lim\inf_{n\rightarrow \infty} \frac{1}{n} \wgt(\prefix{\zeta}{n})$ where $\prefix{\zeta}{n}$ is the prefix of length $n$ of $\zeta$.


\tudparagraph{.3ex}{Scheduler.}
A \emph{scheduler} for $\cM$
is a function $\sched$ that assigns to each non-maximal path $\fpath$
a probability distribution over $\Act(\last(\fpath))$.
If the choice of a scheduler $\sched$ depends only on the current state, i.e., if $\sched(\fpath)=\sched(\fpath^\prime)$ for all non-maximal paths $\fpath$ and $\fpath^\prime$ with $\last(\fpath)=\last(\fpath^\prime)$,
we say that $\sched$ is \emph{memoryless}. In this case, we also view schedulers as functions mapping states $s\in S$ to probability distributions over $\Act(s)$.
A scheduler $\sched$ that satisfies $\sched(\fpath)=\sched(\fpath^\prime)$ for all pairs of finite paths $\fpath$ and $\fpath^\prime$ with $\last(\fpath)=\last(\fpath^\prime)$ and $\wgt(\fpath)=\wgt(\fpath^\prime)$ is called \emph{weight-based} and can be viewed as a function from state-weight pairs $S\times \mathbb{Z}$ to probability distributions over actions.
If there is a finite set $X$ of memory modes and a memory update function $U:S\times \Act \times S \times X \to X$ such that the choice of $\sched$ only depends on the current state after a finite path and the memory mode obtained from updating the memory mode according to $U$ in each step, we say that $\sched$ is a finite-memory scheduler.
A scheduler $\sched$ is called deterministic if $\sched(\fpath)$ is a Dirac distribution
for each path $\fpath$ in which case we also view the scheduler as a mapping to actions in $\Act(\last(\fpath))$.
Given a scheduler $\sched$,
$\zeta \, = \, s_0 \, \act_0 \, s_1 \, \act_1 \ldots$
is a $\sched$-path iff $\zeta$ is a path and
$\sched(s_0 \, \act_0  \ldots \act_{k-1} \, s_k)(\act_k)>0$
for all $k \geq 0$.
Given a scheduler $\sched$ and a finite $\sched$-path $\pi$, we define the residual scheduler $\after{\sched}{\pi}$ by
$
    \after{\sched}{\pi} (\rho) = \sched (\pi\circ \rho)
$
for each finite path $\rho$ starting in $\last(\pi)$.

\tudparagraph{.3ex}{Probability measure.}
We write $\Pr^{\sched}_{\cM,s}$ 
to denote the probability measure induced by a scheduler $\sched$ and a state $s$ of an MDP $\cM$.
It is defined on the $\sigma$-algebra generated by the {cylinder sets} $\Cyl(\pi)$  of all maximal extensions of a finite path  $\pi =
    s_0 \, \act_0 \, s_1 \, \act_1 \,  \ldots \act_{k-1} \, s_k$ starting in state $s$, i.e., $s_0=s$, by assigning  to $\Cyl(\pi)$ the probability that $\pi$ is realized under $\sched$, which is
   $ \sched(s_0)(\act_0) \cdot P(s_0,\act_0,s_1) \cdot \sched(s_0\act_0s_1)(\act_1)
    \cdot \ldots \cdot \sched(s_0\act_0 \dots s_{k-1})(\act_{k-1}) \cdot P(s_{k-1},\act_{k-1},s_k)$.
For details, see \cite{Puterman}.

For a random variable $X$ that is defined on (some) maximal paths in $\cM$, we denote the expected value of $X$ under the probability measure induced by a scheduler $\sched$ and state $s$ by $\mathbb{E}^{\sched}_{\cM,s}(X)$.
We define
$\mathbb{E}^{\min}_{\cM,s}(X) = \inf_{\sched} \mathbb{E}^{\sched}_{\cM,s}(X)$
and
$\mathbb{E}^{\max}_{\cM,s}(X) = \sup_{\sched} \mathbb{E}^{\sched}_{\cM,s}(X)$
where $\sched$ ranges over all schedulers for $\cM$ under which $X$ is defined almost surely.
The variance of $X$ under the probability measure determined by $\sched$ and $s$ in $\cM$ is denoted by $\Var^{\sched}_{\cM,s}(X)$ and defined by
\[
    \Var^{\sched}_{\cM,s}(X)\eqdef\mathbb{E}^{\sched}_{\cM,s}((X-\mathbb{E}^{\sched}_{\cM,s}(X))^2)=\mathbb{E}^{\sched}_{\cM,s}(X^2) -\mathbb{E}^{\sched}_{\cM,s}(X)^2.
\]
Furthermore, for a measurable set of paths $\psi$ with positive probability, $\mathbb{E}^{\sched}_{\cM,s}(X|\psi)$ denotes the conditional expectation of $X$ under $\psi$.
If $s=\sinit$, we sometimes drop the subscript $s$.

These notations are extended to end-components of a given MDP, which are themselves seen as MDPs.
We may, for instance, write $\mathbb{E}^{\min}_{\cE,s}(X)$ where~$\cE$ is an end-component of~$\cM$,
and~$s$ is a state in~$\cE$, and the minimization ranges over schedulers of~$\cM$ that do not leave~$\cE$.

\tudparagraph{1ex}{Accumulated weight.}
For maximal paths $\zeta$ of $\cM$, we define the following random variable $\rawdiaplus \goal$:
\[
    \rawdiaplus\goal(\zeta)=
    \begin{cases}
        \wgt(\zeta)        & \text{ if }\zeta \vDash \Diamond \goal, \\
        \mathit{undefined} & \text{otherwise}.
    \end{cases}
\]
Recall that we only take schedulers under which a random variable is defined almost surely into account when addressing minimal or maximal expected values. For the expected value of $\rawdiaplus\goal$ to be defined, it is necessary that $\goal$ is reached almost surely.
We call a scheduler $\sched$ with $\Pr^{\sched}_{\cM}(\lozenge \goal)=1$ \emph{proper}. So, in the definition of the maximal (or minimal) expected accumulated weight
$\mathbb{E}^{\max}_{\cM}(\rawdiaplus \goal)=\sup_{\sched}\mathbb{E}^{\sched}_{\cM}(\rawdiaplus \goal)$, $\sched$ ranges over all proper schedulers.

\subsection{Auxiliary conclusions from results on expected frequencies}

In this section, we present conclusions from well-known results on the expected frequencies of state-weight pairs in MDPs in the formulation in which we use them in the paper. 
Let $\cM=(S,\Act, P, \sinit,\goal, \wgt)$ be an MDP with weights in $\mathbb{Z}$ and let $\sched$ be a scheduler.
For each state-weight pair $(s,w)\in S\times\mathbb{Z}$, we define the \emph{expected frequency} $\vartheta_{s,w}^{\sched}$ under $\sched$ by
\[
    \vartheta_{s,w}^{\sched}\eqdef \mathbb{E}^{\sched}_{\cM} (\text{number of visits to $s$ with accumulated weight $w$})
\]
where the random variable  ``number of visits to $s$ with accumulated weight $w$'' counts the number of prefixes $\pi$ of a maximal paths $\zeta$
with $\last(\pi)=s$ and $\wgt(\pi)=w$. Note also that in MDPs $\cM$ in which all end components have negative maximal expected mean-payoff, the expected frequencies of all state-weight pairs are finite under any scheduler.

\begin{restatable}{lemma}{lemmafrequencyweightbased}
    Let $\cM$ be an MDP and let $\sched$ be a scheduler such that the expected frequency $\vartheta_{s,w}^{\sched}$ are finite for all state-weight pairs $(s,w)\in S\times\mathbb{Z}$. Then, there is a weight-based (randomized) scheduler $\tsched$ with
    $
        \vartheta_{s,w}^{\sched}=\vartheta_{s,w}^{\tsched}
    $
    for all  $(s,w)\in S\times\mathbb{Z}$.
\end{restatable}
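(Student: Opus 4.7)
The plan is to unfold the accumulated weight into the state space and then apply the classical reduction that replaces a general scheduler with a memoryless one of the same expected visit frequencies. Consider the countably infinite MDP $\cM'$ with state space $S\times\mathbb{Z}$, initial state $(\sinit,0)$, and transitions $(s,w)\overto{\alpha}(s',w+\wgt(s,\alpha))$ of probability $P(s,\alpha,s')$. Every scheduler $\sched$ for $\cM$ lifts canonically to a scheduler $\sched'$ for $\cM'$ with matching frequencies $\vartheta^{\sched}_{s,w}=\vartheta^{\sched'}_{(s,w)}$, and the memoryless schedulers of $\cM'$ correspond bijectively to the weight-based schedulers of $\cM$. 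It therefore suffices to prove: whenever a scheduler of $\cM'$ has finite state visit frequencies everywhere, some memoryless scheduler achieves the same state visit frequencies.

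The construction of $\tsched'$ is the standard randomization-by-ratios. Let $\vartheta^{\sched'}_{(s,w),\alpha}$ denote the expected number of times $\alpha$ is played at $(s,w)$ under $\sched'$, which is at most $\vartheta^{\sched'}_{(s,w)}$ and hence finite, and set
\[
    \tsched'(s,w)(\alpha) \,:=\, \vartheta^{\sched'}_{(s,w),\alpha}\,/\,\vartheta^{\sched'}_{(s,w)}
\]
whenever the denominator is positive (and pick an arbitrary distribution otherwise). The key algebraic observation is the flow-balance identity
\[
    \vartheta^{\rsched}_{(s,w)} \,=\, \mathds{1}[(s,w){=}(\sinit,0)] \,+ \!\! \sum_{(t,v),\beta:\, v+\wgt(t,\beta)=w}\!\! \vartheta^{\rsched}_{(t,v),\beta}\cdot P(t,\beta,s),
\]
valid for any scheduler $\rsched$ in $\cM'$ by decomposing a non-initial visit to $(s,w)$ according to the preceding state-action-successor triple. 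Substituting $\vartheta^{\tsched'}_{(t,v),\beta} = \tsched'(t,v)(\beta)\,\vartheta^{\tsched'}_{(t,v)}$ (because $\tsched'$ is memoryless) and $\vartheta^{\sched'}_{(t,v),\beta} = \tsched'(t,v)(\beta)\,\vartheta^{\sched'}_{(t,v)}$ (by the definition of $\tsched'$) shows that both $\vartheta^{\sched'}$ and $\vartheta^{\tsched'}$ solve the same linear fixed-point equation
\[
    x_{(s,w)} \,=\, \mathds{1}[(s,w){=}(\sinit,0)] \,+\, \sum_{(t,v)} x_{(t,v)}\cdot \widehat{P}((t,v),(s,w))
\]
where $\widehat{P}$ is the transition kernel of the Markov chain $\MC{\tsched'}$ on $\cM'$.

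It remains to identify both vectors with the unique minimal non-negative solution. Iterating the right-hand side of the fixed-point equation from the zero vector yields a monotone sequence of lower bounds whose limit is this minimal solution. The limit equals $\vartheta^{\tsched'}_{(s,w)}$ by the standard time-step unrolling of the Markov chain $\MC{\tsched'}$. On the other hand, a parallel telescoping argument applied to $\sched'$ shows that its finite frequency vector is also dominated by (and therefore, once both are non-negative solutions, equal to) this minimal fixed point: the finiteness hypothesis rules out any additional mass on recurrent classes unreachable from $(\sinit,0)$. Consequently $\vartheta^{\sched'}=\vartheta^{\tsched'}$, and pushing $\tsched'$ back down to the weight-based scheduler $\tsched$ for $\cM$ yields $\vartheta^{\tsched}_{s,w}=\vartheta^{\sched}_{s,w}$ for all $(s,w)$.

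The main obstacle I anticipate is the minimality step in the infinite state space of $\cM'$, since the balance equations there in general admit further non-negative solutions supported on recurrent components disjoint from $(\sinit,0)$. The finiteness hypothesis on $\vartheta^{\sched}_{s,w}$ is precisely what is needed to exclude such extraneous components on both sides, and thus to pin down $\vartheta^{\sched'}$ and $\vartheta^{\tsched'}$ as the common minimal solution.
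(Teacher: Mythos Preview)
Your construction is exactly the paper's: define the weight-based scheduler by the ratios $\tsched(s,w)(\alpha)=\vartheta^{\sched}_{s,w,\alpha}/\vartheta^{\sched}_{s,w}$, which the paper records in one line and attributes to Puterman, Theorem~5.5.1. The unfolding to $\cM'$ is a harmless repackaging of the same idea.

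The gap is in your equality step. You correctly show that both $\vartheta^{\sched'}$ and $\vartheta^{\tsched'}$ satisfy the balance system $x=e_{(\sinit,0)}+x\widehat P$, and that iterating from $0$ yields the \emph{least} non-negative solution, which is $\vartheta^{\tsched'}$. This gives $\vartheta^{\tsched'}\le\vartheta^{\sched'}$, not the other direction. Your sentence ``a parallel telescoping argument applied to $\sched'$ shows that its finite frequency vector is also dominated by \ldots\ this minimal fixed point'' asserts $\vartheta^{\sched'}\le\vartheta^{\tsched'}$, but no such argument is available: the finite-horizon truncations of $\sched'$ do \emph{not} obey the recursion with kernel $\widehat P$, because the conditional action frequencies of $\sched'$ over the first $N$ steps generally differ from the limiting ratios defining $\tsched'$. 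So the ``parallel'' iteration does not exist. Likewise, the remark that ``the finiteness hypothesis rules out any additional mass on recurrent classes unreachable from $(\sinit,0)$'' does not settle the matter: the difference $d:=\vartheta^{\sched'}-\vartheta^{\tsched'}\ge 0$ is a $\widehat P$-invariant measure supported on the \emph{reachable}, \emph{transient} part of $\MC{\tsched'}$, and transient chains can carry nonzero $\sigma$-finite invariant measures (e.g.\ counting measure for simple random walk on $\mathbb Z^3$), so transience alone does not force $d=0$.

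What does work is to argue at the level of state--action frequencies rather than state frequencies. The vector $(\vartheta^{\sched'}_{(s,w),\alpha})$ satisfies the flow constraints $\sum_\alpha y_{(s,w),\alpha}=\mathds 1[(s,w){=}(\sinit,0)]+\sum_{(t,v),\beta}y_{(t,v),\beta}\,P(t,\beta,s)\,\mathds 1[v{+}\wgt(t,\beta){=}w]$, and so does $(\vartheta^{\tsched'}_{(s,w),\alpha})$; both have the same total outflow into the absorbing states $(\goal,\cdot)$ (namely the probability of ever reaching $\goal$ under $\sched'$, since $\tsched'$ only plays actions with positive $\sched'$-frequency). This extra conservation law, together with minimality, pins down the solution; alternatively one can run the Puterman argument on the finite truncations $S\times\{-W,\ldots,W\}$ with an artificial absorbing boundary and pass to the limit using monotone convergence. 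Either route closes the gap, but the sentence you wrote does not.
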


\begin{proof}[Proof sketch]
    Analogous to \cite[Theorem 5.5.1]{Puterman}: For each state-weight pair $(s,w)$ and each action $\alpha\in \Act(s)$, let $\vartheta_{s,w,\alpha}^{\sched}$ be the expected number of times that $\alpha$ is chosen under $\sched$ after   finite path ending in state $s$ with weight $w$.
    Define the scheduler $\tsched$ as a function from $S\times \mathbb{Z} \to \mathrm{Distr}(\Act)$ by letting
    \[
        \tsched(s,w)(\alpha) \eqdef \frac{\vartheta_{s,w,\alpha}^{\sched}}{\vartheta_{s,w}^{\sched}}.\qedhere
    \]
\end{proof}

\begin{corollary}\label{cor:weight-based}
    Let $\cM$ be an MDP. Let $\sched$ be a scheduler for which $\mathbb{E}^{\sched}_{\cM} (\rawdiaplus \goal)$ and $\Var^{\sched}_{\cM} (\rawdiaplus \goal)$ are defined and for which the expected frequency $\vartheta_{s,w}^{\sched}$ are finite for all state-weight pairs $(s,w)\in S\times\mathbb{Z}$. Then, there is a weight-based scheduler $\tsched$ with
    \[
        \mathbb{E}^{\sched}_{\cM} (\rawdiaplus \goal) = \mathbb{E}^{\tsched}_{\cM} (\rawdiaplus \goal) \qquad \text{ and } \qquad \Var^{\sched}_{\cM} (\rawdiaplus \goal) = \Var^{\tsched}_{\cM} (\rawdiaplus \goal).
    \]
\end{corollary}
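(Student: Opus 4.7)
The plan is to apply the previous lemma directly and then express both the expected value and the second moment of $\rawdiaplus\goal$ purely in terms of the expected frequencies of state-weight pairs of the form $(\goal,w)$. Since $\goal$ is a trap state, each maximal path visits $\goal$ at most once, so the random variable ``number of visits to $\goal$ with accumulated weight $w$'' is the indicator of the event $\{\rawdiaplus\goal = w\}$. Consequently, for any proper scheduler~$\qsched$,
\[
\vartheta_{\goal,w}^{\qsched} \;=\; \Pr^{\qsched}_{\cM}(\rawdiaplus\goal = w).
\]

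Using this identity, I would rewrite
\[
\mathbb{E}^{\qsched}_{\cM}(\rawdiaplus\goal) \;=\; \sum_{w\in\mathbb{Z}} w\cdot\vartheta_{\goal,w}^{\qsched}
\qquad\text{and}\qquad
\mathbb{E}^{\qsched}_{\cM}(\rawdiaplus\goal^{\,2}) \;=\; \sum_{w\in\mathbb{Z}} w^2\cdot\vartheta_{\goal,w}^{\qsched},
\]
noting that the assumption that the expected value and the variance of $\rawdiaplus\goal$ under $\sched$ are defined guarantees absolute convergence of both series, and in particular $\sum_w \vartheta_{\goal,w}^{\sched} = 1$.

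Now apply Lemma~2.1 to obtain a weight-based scheduler $\tsched$ with $\vartheta_{s,w}^{\tsched} = \vartheta_{s,w}^{\sched}$ for every state-weight pair. Specialising to $s=\goal$ yields termwise equality of the two series above for $\sched$ and $\tsched$, so both moments agree; in particular $\sum_w\vartheta_{\goal,w}^{\tsched}=1$, so $\tsched$ is proper and $\rawdiaplus\goal$ is almost surely defined under $\tsched$. Equality of the variances then follows from
\[
\Var^{\qsched}_{\cM}(\rawdiaplus\goal) \;=\; \mathbb{E}^{\qsched}_{\cM}(\rawdiaplus\goal^{\,2}) - \mathbb{E}^{\qsched}_{\cM}(\rawdiaplus\goal)^2.
\]

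The only real subtlety I anticipate is checking that the expected frequencies preserved by $\tsched$ really do determine the distribution of $\rawdiaplus\goal$, and not merely its marginal at each weight in some weaker sense; this is exactly where the fact that $\goal$ is an absorbing trap is used, collapsing the ``number of visits'' into an indicator and turning the expected frequency directly into a probability. The finiteness hypothesis on all $\vartheta_{s,w}^{\sched}$ is needed to invoke Lemma~2.1, while definedness of $\mathbb{E}^{\sched}(\rawdiaplus\goal)$ and $\Var^{\sched}(\rawdiaplus\goal)$ is needed to ensure the rearrangement of the series into the form $\sum_w w^k\,\vartheta_{\goal,w}^{\sched}$ is valid.
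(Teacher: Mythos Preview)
Your proposal is correct and follows exactly the paper's approach: the paper's proof is the single sentence that the expected value and the variance of $\rawdiaplus\goal$ depend only on the expected frequencies $\vartheta^{\sched}_{\goal,w}$, and you have spelled out precisely why this is so (using that $\goal$ is a trap so the frequency is the indicator probability). Your added remarks on properness and convergence are reasonable justifications the paper leaves implicit.
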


\begin{proof}
    The expected value and the variance of $\rawdiaplus \goal$ under a scheduler $\sched$ depend only on the expected frequencies $\vartheta^{\sched}_{\goal,w}$ with $w\in \mathbb{Z}$.
\end{proof}

In this paper, we address questions concerning the possible combinations of expected value and variance of the random variable $\rawdiaplus\goal$. Due to this corollary, we can restrict our attention to weight-based schedulers for all investigations in the sequel.

Given two scheduler $\sched$ and $\tsched$, our definition of schedulers does not directly allow us to define a new scheduler $\rsched$ that behaves according to $\sched$ with probability $p\in (0,1)$ and according to $\tsched$ with probability $1-p$. For each state-weight pair $(s,w)$ the expected frequency under the hypothetical scheduler $\rsched$ would be $p\cdot \vartheta^{\sched}_{s,w}+(1-p)\cdot \vartheta^{\tsched}_{s,w}$. The following lemma states that a scheduler achieving these frequencies exists:

\begin{restatable}{lemma}{lemmafrequencyconvex}
    Let $\cM$ be an MDP as above and let $\sched$ and $\tsched$ be  schedulers such that the expected frequency $\vartheta_{s,w}^{\sched}$ and $\vartheta_{s,w}^{\tsched}$ are finite for all state-weight pairs $(s,w)\in S\times\mathbb{Z}$. Further, let $p\in (0,1)$. Then, there exists a scheduler $\rsched$ such that
    $
        \vartheta_{s,w}^{\rsched}=p\cdot \vartheta^{\sched}_{s,w}+(1-p)\cdot \vartheta^{\tsched}_{s,w}
    $
    for all state-weight pairs $(s,w)$.
\end{restatable}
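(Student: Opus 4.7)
The plan is to construct $\rsched$ as a history-dependent randomized scheduler that performs a Bayesian mixture of $\sched$ and $\tsched$, weighted by the probability of the observed path prefix under each of the two branches. The identity on expected frequencies will then follow from the stronger fact that $\rsched$ induces precisely the convex combination $p\cdot \Pr^{\sched}_{\cM} + (1-p)\cdot \Pr^{\tsched}_{\cM}$ on cylinders of finite paths; summing this convex combination over all finite paths reaching a given state-weight pair $(s,w)$ immediately yields the desired relation on $\vartheta_{s,w}$.

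Concretely, for $\psched \in \{\sched,\tsched\}$ and a finite path $\pi$ I write $q^{\psched}(\pi) \eqdef \Pr^{\psched}_{\cM}(\Cyl(\pi))$, and define
\[
\rsched(\pi)(\alpha) \eqdef \frac{p\, q^{\sched}(\pi)\, \sched(\pi)(\alpha) + (1-p)\, q^{\tsched}(\pi)\, \tsched(\pi)(\alpha)}{p\, q^{\sched}(\pi) + (1-p)\, q^{\tsched}(\pi)}
\]
whenever the denominator is positive, and as an arbitrary distribution over $\Act(\last(\pi))$ otherwise (such $\pi$ will be unreachable under $\rsched$). This is a well-defined probability distribution because summing the numerator over $\alpha \in \Act(\last(\pi))$ reproduces the denominator.

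The key step is to prove by induction on the length of $\pi$ that
\[
q^{\rsched}(\pi) \;=\; p\, q^{\sched}(\pi) + (1-p)\, q^{\tsched}(\pi).
\]
The base case $\pi = \sinit$ is immediate since all three quantities equal $1$. For the inductive step, the standard factorisation $q^{\psched}(\pi\,\alpha\,s) = q^{\psched}(\pi)\cdot \psched(\pi)(\alpha)\cdot P(\last(\pi),\alpha,s)$ combined with the induction hypothesis yields $q^{\rsched}(\pi)\cdot \rsched(\pi)(\alpha) = p\,q^{\sched}(\pi)\sched(\pi)(\alpha) + (1-p)\,q^{\tsched}(\pi)\tsched(\pi)(\alpha)$, since $q^{\rsched}(\pi)$ is exactly the denominator of $\rsched(\pi)(\alpha)$; multiplying by $P(\last(\pi),\alpha,s)$ gives $p\,q^{\sched}(\pi\,\alpha\,s) + (1-p)\,q^{\tsched}(\pi\,\alpha\,s)$. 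Summing this identity over all finite paths $\pi$ ending in $s$ with accumulated weight $w$, and using $\vartheta^{\psched}_{s,w} = \sum_{\pi} q^{\psched}(\pi)$ (the sums converge by the finiteness hypothesis on $\vartheta^{\sched}$ and $\vartheta^{\tsched}$), yields $\vartheta^{\rsched}_{s,w} = p\,\vartheta^{\sched}_{s,w} + (1-p)\,\vartheta^{\tsched}_{s,w}$.

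The main conceptual obstacle is spotting the right definition of $\rsched$: the naive pointwise mixture $\pi \mapsto p\,\sched(\pi) + (1-p)\,\tsched(\pi)$ does not reproduce the convex combination of path measures, since the scheduler must reweight its two sources by the posterior likelihood $q^{\psched}(\pi)$ of each branch given the observed history. Once this Bayesian reweighting is in place, the computation is routine. If a weight-based $\rsched$ is desired, one can subsequently invoke the preceding lemma to replace $\rsched$ by a weight-based scheduler with identical expected frequencies.
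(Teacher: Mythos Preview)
Your proof is correct. The inductive argument that $q^{\rsched}(\pi) = p\,q^{\sched}(\pi) + (1-p)\,q^{\tsched}(\pi)$ is clean, the edge case where the denominator vanishes is handled properly (both $q^{\sched}(\pi)$ and $q^{\tsched}(\pi)$ then vanish since $p\in(0,1)$, so the identity persists trivially), and the passage from cylinder probabilities to expected frequencies via $\vartheta^{\psched}_{s,w} = \sum_{\pi} q^{\psched}(\pi)$ is justified by non-negativity (Tonelli), with finiteness guaranteed by hypothesis.

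Your route differs from the paper's. The paper constructs $\rsched$ directly as a \emph{weight-based} scheduler, setting
\[
\rsched(s,w)(\alpha) \;=\; \frac{p\cdot \vartheta^{\sched}_{s,w,\alpha} + (1-p)\cdot \vartheta^{\tsched}_{s,w,\alpha}}{p\cdot \vartheta^{\sched}_{s,w} + (1-p)\cdot \vartheta^{\tsched}_{s,w}},
\]
and appeals to a standard result on expected-frequency vectors (Kallenberg) for correctness. You instead build a fully history-dependent scheduler via the Bayesian posterior mixture and prove the stronger statement that the induced measure on cylinders is exactly the convex combination of the two original measures; the frequency claim then drops out by summation. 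Your argument is more elementary and self-contained, and yields more (equality at the level of path distributions, not just aggregated frequencies). The paper's construction has the minor advantage of producing a weight-based scheduler in one step, which is the form used downstream; you correctly note that the preceding lemma recovers this if needed.
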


\begin{proof}[Proof sketch]
Let $\vartheta_{s,w,\alpha}^{\sched}$ be defined as in the proof above.
    We define the weight-based scheduler $\rsched$ as follows: For all state-weight pairs $(s,w)$ and all $\alpha\in \Act(s)$, let
    \[
        \rsched(s,w)(\alpha)=\frac{p\cdot \vartheta^{\sched}_{s,w,\alpha}+(1-p)\cdot \vartheta^{\tsched}_{s,w,\alpha}}{p\cdot \vartheta^{\sched}_{s,w}+(1-p)\cdot \vartheta^{\tsched}_{s,w}}. 
    \]
    The proof of the correctness is analogous to \cite[Theorem 9.12]{Kallenberg}.
\end{proof}

This lemma allows us to introduce the following notation:

\begin{definition}\label{def:convex_scheduler}
    Given $\cM$, $\sched$ and $\tsched$ as in the previous lemma, we denote the scheduler $\rsched$ whose existence is stated in the lemma by $p\cdot \sched \oplus (1-p)\cdot \tsched$.
\end{definition}

\def\MD{\mathsf{MD}}
\section{Minimal variance among expectation-optimal schedulers}
\label{sec:minimal_variance}

Let us call a scheduler \emph{expectation-optimal} if it maximizies the expectation
of~$\rawdiaplus\goal$ from a given state~$s$. 
In this section, we prove a result that is of interest in its own right and that will play a crucial role in our investigation of the optimization of the VPE in the following section.
Namely, we show how to compute a scheduler that minimizes the variance
among expectation-optimal schedulers in polynomial time.
Note that in MDPs with weights in $\mathbb{Z}$, the mimimization of the expectation of $\rawdiaplus\goal$
can be reduced to the maximization by multiplying all weights with $-1$. This change of weights does not affect the variance and hence all results of this section also apply to expectation-minimal schedulers.

We  assume that in a given MDP $\cM=(S,\Act,P,\sinit,\wgt,\goal)$, the maximal achievable expectation of $\rawdiaplus\goal$ is finite.
This can be checked in polynomial time~\cite{BaiBerDubGbuSan-LICS18} and, when this value is finite, it is achievable
by memoryless deterministic strategies. 
By~\cite{BaiBerDubGbuSan-LICS18}, all end components~$E$ of $\cM$  are then either $0$-end components or satisfy $\mathbb{E}^{\max}_{E}(\MeanPayoff)<0$.

The algorithm proceeds as follows. First, a transformation is applied so as to ensure that
the only end-components in~$\cM$ are such that the maximal achievable expected mean payoff
is negative; while preserving the expectation and the variance of $\rawdiaplus\goal$ (Lemma~\ref{lemma:spider}).
We then prune the MDP so that all actions are optimal for maximizing the expected $\rawdiaplus\goal$
(Lemma~\ref{lemma:pruned-cM'}). It follows that all schedulers then achieve the same expected $\rawdiaplus\goal$.
We then derive an equation system in which the variances at each state are unknowns, while the expectations
are known constants (Lemma~\ref{lemma:variance-eqn-system}). We conclude by showing that this equation system admits a unique solution
and is solvable in polynomial time.
Omitted proofs can be found in Appendix \ref{app:minimal_variance}.

\begin{restatable}[\cite{BaiBerDubGbuSan-LICS18}]{lemma}{lemmapreprocessing}
    \label{lemma:spider}
    Let $\cM=(S,\Act,P,\sinit,\wgt,\goal)$ be an MDP with $\mathbb{E}^{\max}_{\cM}(\rawdiaplus \goal)<\infty$.
    There is a polynomial transformation which outputs an MDP~$\cM'$
    with the following properties:
    \begin{enumerate}
        \item $\cM'$ has no $0$-end-components,
        \item there is a mapping~$f$ from schedulers
              of~$\cM$ to those of~$\cM'$ such that for all \emph{proper} schedulers $\sched$ for~$\cM$,
              $\mathbb{E}^{\sched}_{\cM}(\rawdiaplus \goal) = \mathbb{E}^{f(\sched)}_{\cM'}(\rawdiaplus \goal)$,
              and $\mathbb{V}^{\sched}_{\cM}(\rawdiaplus \goal) = \mathbb{V}^{f(\sched)}_{\cM'}(\rawdiaplus \goal)$.
        \item there is a mapping~$g$ from schedulers
              of~$\cM'$ to those of~$\cM$ such that for all \emph{proper} schedulers $\sched$ for~$\cM'$,
              $\mathbb{E}^{g(\sched)}_{\cM}(\rawdiaplus \goal) = \mathbb{E}^{\sched}_{\cM'}(\rawdiaplus \goal)$,
              and $\mathbb{V}^{g(\sched)}_{\cM}(\rawdiaplus \goal) = \mathbb{V}^{\sched}_{\cM'}(\rawdiaplus \goal)$.
    \end{enumerate}
\end{restatable}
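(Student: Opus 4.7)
The plan is to apply a ``spider'' construction that flattens every maximal $0$-end-component of $\cM$. Since $\mathbb{E}^{\max}_{\cM}(\rawdiaplus\goal) < \infty$, every end-component of $\cM$ is either a $0$-end-component or has negative maximal expected mean-payoff, so it suffices to eliminate the former. I would begin by computing the maximal $0$-end-components $E_1,\ldots,E_k$ of $\cM$ in polynomial time. A key structural observation is that within any $E=E_i$ every cycle has weight $0$, so for any two states $s,t\in E$ all internal paths from $s$ to $t$ accumulate the same weight $w_E(s,t)$, readable off any spanning tree rooted at $t$. In $\cM'$ I would keep the same state space but replace, for each $s\in E$, each action originally enabled in $s$ by a bouquet of ``exit'' actions $\alpha_{s,t}$: for every $t\in E$ and every action $\alpha$ enabled at $t$ in $\cM$ whose successor distribution has positive mass outside $E$, introduce $\alpha_{s,t}$ at $s$ with weight $w_E(s,t)+\wgt(t,\alpha)$ and with transition probabilities obtained from $P(t,\alpha,\cdot)$ restricted to $S\setminus E$ and renormalized by the total exit mass.

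For the scheduler correspondence, observe that for any proper scheduler $\sched$ of $\cM$ entering a component $E$ at some state $s$, the almost-sure eventual exit induces a well-defined distribution over exit tuples $(t,\alpha,u)$ with $u\notin E$; the scheduler $f(\sched)$ emulates this distribution in $\cM'$ by firing $\alpha_{s,t}$ once and landing at $u$. Conversely, $g(\sched)$ expands each choice of $\alpha_{s,t}$ in $\cM'$ into a fixed deterministic path from $s$ to $t$ inside $E$, followed by $\alpha$. Because the cumulative weight along any such detour is exactly $w_E(s,t)+\wgt(t,\alpha)$, the random variable $\rawdiaplus\goal$ has the same distribution under $\sched$ and $f(\sched)$, and similarly under $\sched$ and $g(\sched)$. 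The two identities of items~2 and~3 then follow immediately, since expectation and variance are both functionals of the distribution of $\rawdiaplus\goal$; this is the one place where upgrading from the expectation-only statement of the cited paper to the variance-preserving statement requires comment.

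The step I expect to be the main obstacle is verifying that $\cM'$ truly contains no $0$-end-component. On the one hand, from any $s$ in a former $E_i$ every new action of $\cM'$ leaves $E_i$ in one step, so no cycle of $\cM'$ stays inside $E_i$. On the other hand, any other cycle of $\cM'$ unfolds, via the $\alpha_{s,t}$ actions, to a cycle of $\cM$ with identical weight; a hypothetical $0$-cycle in $\cM'$ would therefore yield a $0$-cycle in $\cM$ that either extends some $E_i$ (contradicting its maximality) or lies inside a negative-mean-payoff end-component (impossible by the classification of end-components under the finiteness hypothesis). Polynomiality is then immediate: the number of new actions is at most $|S|^2\cdot|\Act|$, each $w_E(s,t)$ is obtained by a single spanning-tree traversal per component, and the renormalization of transition probabilities uses only arithmetic on the original rationals.
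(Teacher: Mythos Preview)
Your overall plan matches the paper's: invoke the spider construction of the cited reference, and observe that since it preserves the full distribution of $\rawdiaplus\goal$ (via the events $\phi_k = \{\rawdiaplus\goal \geq k\}$, which are shown there to be preserved), it preserves every moment including the variance. The paper's proof is essentially just this citation plus that one observation, so your level of detail already exceeds what the paper provides.

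Your explicit reconstruction, however, has a genuine error in the renormalization step. The claim that ``the cumulative weight along any such detour is exactly $w_E(s,t)+\wgt(t,\alpha)$'' fails when an exit action $\alpha$ at $t\in E$ has some successors \emph{inside} $E$: the transition $(t,\alpha,t')$ with $t'\in E$ is not an internal transition of the $0$-EC (since $\alpha\notin\mathfrak{A}_E(t)$), so its weight need not equal $w_E(t,t')$, and a proper scheduler in $\cM$ that takes $\alpha$, falls back into $E$, and exits later can accumulate a different total. Concretely, take $E=\{t,t'\}$ with internal actions $\beta\colon t\to t'$ of weight $1$ and $\beta'\colon t'\to t$ of weight $-1$, and one exit action $\alpha$ at $t$ with $\wgt(t,\alpha)=5$ going to $\goal$ and to $t'$ each with probability $\tfrac12$. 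The scheduler playing $\alpha$ at $t$ and $\beta'$ at $t'$ reaches $\goal$ with weight $5+4k$ with probability $2^{-(k+1)}$, so its expectation is $9$; yet in your $\cM'$ the only action at $t$ is $\alpha_{t,t}$ of weight $5$ leading to $\goal$ with probability $1$, so every $\cM'$-scheduler gets weight exactly $5$ and no $f(\sched)$ can exist. Symmetrically, your $g$ is underspecified in this situation (you never say what happens when $\alpha$ lands in $E$). The fix is to drop the renormalization and let $\alpha_{s,t}$ keep all successors of $(t,\alpha)$, including those in $E$: every action at a former $0$-EC state still has positive probability of leaving, so no subset of $E$ is an end component of $\cM'$, while the weight accumulated step by step now matches $\cM$ exactly and your unfolding argument for the absence of $0$-ECs elsewhere goes through.
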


\medskip 

From now on, by the previous lemma, we assume that $\cM$ only has end-components~$E$
with $\mathbb{E}^{\max}_{\cM}(\MeanPayoff)<0$.
We start by computing $\mathbb{E}^{\max}_{\cM}(\rawdiaplus \goal)$ with the following equation:
\begin{equation*}
    \label{eqn:mu}
    \mu_s = \left\{\begin{array}{ll}
        0                                                                  & \text{ if } s = \goal, \\
        \max_{a \in \Act(s)}\sum_{s' \in S}P(s,a,s')(\wgt(s,a) + \mu_{s'}) & \text{ otherwise.}
    \end{array}\right. \tag{$\ast$}
\end{equation*}

By \cite{BerTsi91}, \eqref{eqn:mu} has the unique solution $\mu_s = \mathbb{E}^{\max}_{\cM}(\rawdiaplus \goal)$ and this solution is computable in polynomial time via linear programming.
Let us define $\Act^{\max}(s)$ as the set of actions from~$s$ which satisfy~\eqref{eqn:mu} with equality,
i.e. $\Act^{\max}(s) \eqdef \{ a \in \Act(s) \mid \mu_s = \wgt(s,a)+\sum_{s' \in S}P(s,a,s') \mu_{s'}\}$,
and let~$\cM'$ be obtained by restricting~$\cM$ to actions from~$\Act^{\max}$.
By standard arguments (see Appendix \ref{app:minimal_variance}), we can show the following lemma:

\begin{restatable}{lemma}{lemmapruned}
    \label{lemma:pruned-cM'}
    Let~$(\mu_s)_{s \in S}$ be the solution of~\eqref{eqn:mu} for an MDP~$\cM$.
    Let~$\cM'$ obtained from~$\cM$ as above.
    Then, $\cM'$ has no end-components. Moreover, for all $s \in S$,
    all schedulers~$\sched$ of $\cM'$ achieve $\mathbb{E}^\sched_{\cM'}[\rawdiaplus\goal] = \mu_s$.
\end{restatable}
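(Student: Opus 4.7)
The plan is to prove the two claims in order, using the Bellman equation characterizing $\mu_s$ as the unifying tool. First, I want to observe that the defining equation of $\Act^{\max}$ makes $W_n + \mu_{S_n}$ a martingale under every scheduler of $\cM'$, where $W_n$ is the weight accumulated along the first $n$ steps and $S_n$ the state after $n$ steps. Indeed, for any action $a \in \Act^{\max}(s)$ one has $\mu_s = \wgt(s,a)+\sum_{s'}P(s,a,s')\mu_{s'}$, which is precisely the martingale identity. This single identity will drive both parts.

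For the first claim, I would argue by contradiction: suppose $E$ is an end component of $\cM'$. Since $\cM'$ is obtained by restricting actions, $E$ is also an end component of $\cM$, and by the preprocessing of Lemma~\ref{lemma:spider} we have $\mathbb{E}^{\max}_E(\MeanPayoff)<0$. Now pick any memoryless scheduler $\sched$ of $\cM'$ that stays in $E$. The martingale property gives $\mathbb{E}^\sched_{\cM,s}[W_n]=\mu_s-\mathbb{E}^\sched_{\cM,s}[\mu_{S_n}]$, whose absolute value is bounded by $2\max_{t\in S}|\mu_t|$, a constant independent of $n$. Dividing by $n$ and taking $\liminf$ shows $\mathbb{E}^\sched_{E,s}(\MeanPayoff)\geq 0$, contradicting $\mathbb{E}^{\max}_E(\MeanPayoff)<0$. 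Hence no such $E$ exists.

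For the second claim, since $\cM'$ has no end components (other than the trap $\{\goal\}$), a standard result guarantees that every scheduler of $\cM'$ reaches $\goal$ almost surely, and moreover the hitting time $T$ of $\goal$ has exponential tails uniformly over schedulers. In particular, $W_T=\rawdiaplus\goal$ is integrable. Applying optional stopping to the bounded-increment martingale $W_n+\mu_{S_n}$, or equivalently passing to the limit in $\mathbb{E}^\sched_{\cM',s}[W_n+\mu_{S_n}]=\mu_s$ and using dominated convergence (justified by the exponential tail of $T$ together with $\mu_\goal=0$), yields $\mathbb{E}^\sched_{\cM',s}[\rawdiaplus\goal]=\mu_s$.

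The only delicate step is the passage to the limit / optional stopping in the second part, which requires the uniform exponential tail of the hitting time. This is a classical consequence of the absence of end components in a finite MDP, so I expect it to be quotable rather than re-proved; everything else reduces to the one-line Bellman identity for $\Act^{\max}$.
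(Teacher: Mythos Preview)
Your argument for the first claim is essentially the paper's: both exploit that $\mu_s = \wgt(s,a)+\sum_{s'}P(s,a,s')\mu_{s'}$ for every $a\in\Act^{\max}(s)$ to conclude that the expected accumulated weight after $k$ steps inside a putative end component is bounded uniformly in~$k$, which contradicts $\mathbb{E}^{\max}_E(\MeanPayoff)<0$. You phrase this via the martingale $W_n+\mu_{S_n}$, the paper writes out $\mu_s=\sum_{\rho\in B_k}\Pr^\sched(\rho)(\wgt(\rho)+\mu_{\last(\rho)})$ by induction on~$k$; the content is identical. One small point: from $|\mathbb{E}[W_n]|\leq 2\max_t|\mu_t|$ you conclude ``$\mathbb{E}^\sched_{E,s}(\MeanPayoff)\geq 0$''; strictly this gives $\lim_n\frac1n\mathbb{E}[W_n]=0$, and you still need the standard ergodic fact that for a memoryless scheduler on a finite state space this limit equals the expected mean payoff. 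The paper silently uses the same fact in the reverse direction.

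For the second claim your route is genuinely different. The paper argues indirectly: every memoryless deterministic scheduler of $\cM'$ yields values satisfying~\eqref{eqn:mu}, hence equals $\mu$ by uniqueness; then it invokes \cite{BerTsi91} to say that the supremum and infimum over all schedulers are attained by memoryless deterministic ones, so every scheduler gives~$\mu_s$. Your optional-stopping argument is more direct and self-contained: it handles arbitrary (history-dependent, randomized) schedulers in one stroke without quoting the MD-optimality result. The price is the integrability justification you flag (exponential tail of the hitting time in an end-component-free finite MDP), but that is indeed a standard quotable fact. Both approaches are short; yours avoids an external reference, the paper's avoids the analytic stopping-time technicality.
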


\medskip

So, in order to find the variance-minimal scheduler among expectation optimal schedulers for $\cM$, it is sufficient to find a variance-minimal scheduler for $\cM'$. 
We derive the following lemma by adapting \cite{verhoeff2004reward} to MDPs.

\begin{restatable}{lemma}{lemmavarianceequation}
    \label{lemma:variance-eqn-system}
    Consider an MDP~$\cM$, and assume that there is a vector $(\mu_s)_{s \in S}$ of values
    such that all schedulers~$\sched$ satisfy
    $\forall s \in S, \mathbb{E}_{\cM,s}^{\sched}(\rawdiaplus\goal) = \mu_s$.
    Then, $(\mathbb{V}^{\inf}_{\cM,s}(\rawdiaplus\goal))_{s \in S}$ is the unique solution of the following equation:
    \begin{equation*}\label{eqn:variance}
        V_s = \left\{\begin{array}{ll}
            0                                                                                           & \text{ if } s = \goal, \\
            \min_{a \in \Act(s)} \sum_{t \in S}P(s,a,t)\left((\wgt(s,a) + \mu_t - \mu_s)^2 + V_t\right) & \text{ otherwise.}
        \end{array}\right. \tag{$\ast\ast$}
    \end{equation*}
\end{restatable}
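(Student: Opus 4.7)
The plan is to derive a one-step variance decomposition valid for every scheduler and then recognize the resulting fixed-point equation as the Bellman equation of an auxiliary stochastic shortest path problem with non-negative transition costs, to which classical SSP theory applies.

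First, observe that the hypothesis $\mathbb{E}^{\sched}_{\cM,s}(\rawdiaplus\goal) = \mu_s$ for \emph{every} scheduler $\sched$ forces every scheduler to be proper, since otherwise $\rawdiaplus\goal$ would not be defined almost surely. Hence $\goal$ is reached almost surely under every scheduler of $\cM$, so $\cM$ contains no end-component other than $\{\goal\}$. Now fix a scheduler $\sched$ and a state $s\neq\goal$. Conditioning on the first action $a \in \Act(s)$ chosen by $\sched$ and on the sampled successor $t$, write $\rawdiaplus\goal = \wgt(s,a) + X_t$, where $X_t$ is the accumulated weight from $t$ under the residual scheduler $\after{\sched}{s\,a\,t}$. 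Setting $d = \wgt(s,a)+\mu_t-\mu_s$ and expanding
\[
(\rawdiaplus\goal - \mu_s)^2 \;=\; d^2 + 2d(X_t - \mu_t) + (X_t - \mu_t)^2,
\]
the cross term vanishes in expectation under $\after{\sched}{s\,a\,t}$, because by hypothesis every scheduler --- in particular the residual one --- attains expectation $\mu_t$ from $t$. Summing over $a$ and $t$ then yields
\[
\mathbb{V}^{\sched}_{\cM,s}(\rawdiaplus\goal) = \sum_{a}\sched(s)(a)\sum_{t}P(s,a,t)\Bigl[(\wgt(s,a)+\mu_t-\mu_s)^2 + \mathbb{V}^{\after{\sched}{s\,a\,t}}_{\cM,t}(\rawdiaplus\goal)\Bigr].
\]

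Next, I reduce variance minimization to a standard SSP. Let $\cM^\star$ be obtained from $\cM$ by keeping the same state/action structure and transition probabilities but replacing $\wgt$ with the non-negative transition costs $c(s,a,t)=(\wgt(s,a)+\mu_t-\mu_s)^2$ (with zero cost at $\goal$). Iterating the one-step decomposition and using that $\goal$ is reached almost surely, one obtains $\mathbb{V}^{\sched}_{\cM,s}(\rawdiaplus\goal) = \mathbb{E}^{\sched}_{\cM^\star,s}[\text{total cost accumulated before reaching }\goal]$, so variance minimization in $\cM$ coincides exactly with the SSP from $s$ to $\goal$ in $\cM^\star$. Since $\cM^\star$ inherits the transition structure of $\cM$, $\goal$ is reached almost surely under every scheduler of $\cM^\star$, and the costs are non-negative, the classical SSP theorem of~\cite{BerTsi91} applies: the Bellman optimality equation, which is exactly the recursion (**), admits a unique solution in $\Real^{S}$, this solution is the vector of optimal expected costs, and it is attained by a memoryless deterministic scheduler. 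Together this shows that $(\mathbb{V}^{\inf}_{\cM,s}(\rawdiaplus\goal))_{s \in S}$ solves (**) and that the solution is unique.

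The main conceptual obstacle is the variance decomposition itself: expressing the variance as the expected sum of per-step squared deviations hinges crucially on the hypothesis that \emph{every} scheduler --- not only $\sched$ --- reaches expectation $\mu_t$ from $t$, which is what kills the cross term under the residual scheduler. This is precisely the structural feature that turns the variance-minimization problem over $\cM$ into an additive, non-negative-cost SSP over $\cM^\star$, making the polynomial-time algorithmics and uniqueness statements of \cite{BerTsi91} directly available.
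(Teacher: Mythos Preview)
Your proposal is correct and follows essentially the same approach as the paper: both derive the one-step variance decomposition by conditioning on the first action and successor, use the hypothesis that every scheduler attains $\mu_t$ from $t$ to eliminate the cross term (the paper packages this as the identity $\mathbb{E}[(c+X)^2]=(c+\mathbb{E}[X])^2+\mathbb{V}[X]$ with $\mathbb{E}[X]=\mu_t$ constant), and then recognize the resulting system as an SSP with non-negative transition costs $\wgt'(s,a,t)=(\wgt(s,a)+\mu_t-\mu_s)^2$, invoking~\cite{BerTsi91} for uniqueness. Your presentation is slightly more explicit in isolating the ``cross-term vanishes'' mechanism and in phrasing the reduction as an auxiliary MDP $\cM^\star$, but the mathematical content is the same.
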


Note that the equation system (\ref{eqn:variance}) is the same as the equation system used to minimize the expected accumulated weight before reaching 
$\goal$ under the weight function $\wgt'$ that assigns the non-negative weight $(\wgt(s,a) + \mu_t - \mu_s)^2$ to the transition $(s,\alpha,t)$.
So, this equation system is solvable in polynomial time \cite{deAlf99}. Using that all schedulers in $\cM'$ achieve an expected accumulated weight of $\mu_s$ when starting in state $s$,
the results of this section can be combined to the following theorem.
\begin{theorem}
    Given an MDP~$\cM$  such that $\mathbb{E}^{\max}_{\cM}[\rawdiaplus\goal] < \infty$,
     a memoryless deterministic, expectation-optimal scheduler~$\sched$ such that
    $\mathbb{V}_{\cM,s}^{\sched}[\rawdiaplus\goal]$  is minimal  among all expectation-optimal schedulers for any state $s$ is computable in polynomial time.
\end{theorem}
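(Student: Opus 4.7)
The plan is to stitch together the three preceding lemmas. First I would invoke Lemma~\ref{lemma:spider} to reduce to an MDP $\cM'$ whose only end components have strictly negative maximal expected mean payoff (so all state-weight expected frequencies are finite and every proper scheduler has well-defined expectation and variance of $\rawdiaplus\goal$). The mappings $f$ and $g$ preserve both moments, and inspection of the construction underlying Lemma~\ref{lemma:spider} shows that they also preserve the memoryless deterministic character of a scheduler, so it suffices to produce a memoryless deterministic variance-minimal expectation-optimal scheduler on $\cM'$ and translate it back via $g$.

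On $\cM'$, I would next compute the vector $(\mu_s)_{s \in S}$ of maximal expected accumulated weights by solving the linear program associated with equation $(\ast)$; by~\cite{BerTsi91} this takes polynomial time and yields the unique solution $\mu_s = \mathbb{E}^{\max}_{\cM',s}(\rawdiaplus\goal)$. Then I would read off the sets $\Act^{\max}(s)$ of actions attaining equality in $(\ast)$ and form the pruned MDP $\cM''$ keeping only those actions. By Lemma~\ref{lemma:pruned-cM'}, $\cM''$ has no end components and \emph{every} scheduler of $\cM''$ achieves $\mathbb{E}^{\sched}_{\cM'',s}(\rawdiaplus\goal) = \mu_s$. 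Hence a scheduler is expectation-optimal in $\cM'$ if and only if (modulo behavior on a null set of paths) it is a scheduler of $\cM''$, and minimizing the variance among expectation-optimal schedulers of $\cM'$ reduces to minimizing $\mathbb{V}_{\cM'',s}^{\sched}(\rawdiaplus\goal)$ over all schedulers of $\cM''$.

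For the variance minimization on $\cM''$ I would appeal to Lemma~\ref{lemma:variance-eqn-system}: the values $V_s = \mathbb{V}^{\inf}_{\cM'',s}(\rawdiaplus\goal)$ are the unique solution of $(\ast\ast)$. Crucially, $(\ast\ast)$ is exactly the Bellman equation for the minimal expected accumulated weight in $\cM''$ under the non-negative transition weight $\wgt'(s,a,t) \eqdef (\wgt(s,a) + \mu_t - \mu_s)^2$ and target $\goal$. Since $\cM''$ has no end components, this is a standard stochastic shortest path problem with non-negative weights, solvable in polynomial time by the LP of~\cite{deAlf99}. Reading off a minimizing action at each state yields a memoryless deterministic scheduler $\sched^\ast$ for $\cM''$ attaining $\mathbb{V}_{\cM'',s}^{\sched^\ast}(\rawdiaplus\goal) = V_s$ simultaneously for all $s$; pulling back through $g$ gives the desired scheduler on $\cM$.

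The only subtle point, which I consider the main obstacle, is justifying that a single memoryless deterministic scheduler simultaneously minimizes the variance from every starting state and that the reduction to the non-negative SSPP is legitimate. For the first, one uses that $(\ast\ast)$ is a Bellman equation in which the choice at $s$ only affects the summand at $s$, so picking an argmin action per state yields a uniformly optimal memoryless deterministic policy (the standard argument for SSPPs with non-negative weights). For the second, one must verify that although $\wgt'$ depends on the precomputed constants $\mu_s$, the random variable being minimized on $\cM''$ really is $\mathbb{E}^{\sched}_{\cM'',s}((\rawdiaplus\goal - \mu_s)^2) = \mathbb{V}_{\cM'',s}^{\sched}(\rawdiaplus\goal)$, which follows from Lemma~\ref{lemma:pruned-cM'} because the expectation of $\rawdiaplus\goal$ is the constant $\mu_s$ under every scheduler of $\cM''$. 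All steps run in polynomial time, so the overall algorithm does as well.
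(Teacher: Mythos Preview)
Your proposal is correct and follows essentially the same route as the paper: apply Lemma~\ref{lemma:spider} to remove $0$-end-components, solve~\eqref{eqn:mu} for $(\mu_s)$, prune to $\Act^{\max}$ so that all schedulers share the same expectation (Lemma~\ref{lemma:pruned-cM'}), and then observe that~\eqref{eqn:variance} is the Bellman system for a non-negative SSPP with transition weights $(\wgt(s,a)+\mu_t-\mu_s)^2$, solvable in polynomial time via~\cite{deAlf99}. Your added remarks on why a single memoryless deterministic argmin policy is uniformly optimal and on pulling the scheduler back through $g$ are exactly the details the paper leaves implicit.
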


\section{Variance-penalized expectation}\label{sec:VPE}
The goal of this section is to develop an algorithm to compute the optimal  \emph{variance-penalized expectation} (VPE).
Given a rational $\lambda>0$, we define the VPE with parameter $\lambda$ under a scheduler $\sched$
as
\[
	\VP[\lambda]^{\sched}_{\cM} \eqdef \mathbb{E}^{\sched}_{\cM}(\rawdiaplus \goal) - \lambda \cdot \Var^{\sched}_{\cM} (\rawdiaplus \goal)
	= \mathbb{E}^{\sched}_{\cM}(\rawdiaplus \goal) - \lambda \cdot \mathbb{E}^{\sched}_{\cM}(\rawdiaplus \goal^2) + \lambda\cdot (\mathbb{E}^{\sched}_{\cM}(\rawdiaplus \goal))^2.
\]

\tudparagraph{1ex}{Task: }
Compute the maximal variance-penalized expectation
\[
	\VP[\lambda]_{\cM}^{\max}\eqdef \sup_{\sched} \VP[\lambda]^{\sched}_{\cM}
\]
where the supremum ranges over all proper schedulers.
Furthermore, compute an optimal  scheduler $\sched$ with $\VP[\lambda]_{\cM}^{\sched}=\VP[\lambda]_{\cM}^{\max}$.
\vspace{12pt}

Throughout this section,
we  will restrict ourselves to MDPs $\cM=(S,\Act,P,\sinit,\wgt,\goal)$  with a weight function $\wgt \colon S\times \Act \to \mathbb{N}$, i.e., we only consider MDPs with non-negative weights.
Key results established in this section do not hold in the general setting with arbitrary weights and further complications arise. In the conclusions we will briefly discuss these complications.

As before, we  are only interested in schedulers that reach the goal with probability $1$. If the maximal expectation $\mathbb{E}^{\max}_{\cM}(\rawdiaplus \goal)<\infty$, it is well-known that in this case of non-negative weights, all end components of $\cM$ are $0$-end components \cite{deAlf99,BaiBerDubGbuSan-LICS18}.
Hence, w.l.o.g., we can assume  that $\cM$ has no end components throughout this section by Lemma \ref{lemma:spider}.
In this case, $\rawdiaplus \goal$ is defined on almost all paths under any scheduler.
So, in particular the values $\mathbb{E}^{\sched}_{\cM}(\rawdiaplus \goal)$ and $\Var^{\sched}_{\cM}(\rawdiaplus \goal)$ are defined for all schedulers $\sched$. Furthermore, as we have seen in Corollary \ref{cor:weight-based}, it is sufficient to consider weight-based schedulers for the optimization of VPEs.
The main result of this section is the following:

\begin{theorem*}
Given an MDP $\cM$ and $\lambda$ as above, the optimal value $\VP[\lambda]_{\cM}^{\max}$ and an optimal scheduler $\sched$ can be computed 
in exponential space. Given a rational $\vartheta$, the threshold problem whether $\VP[\lambda]_{\cM}^{\max}\geq \vartheta$ is in NEXPTIME and EXPTIME-hard.
\end{theorem*}

To obtain the main result, we will first prove that the maximal VPE is obtained by a deterministic scheduler (Section \ref{sub:deterministic}).
This result can then  be used for the EXPTIME-hardness proof for the threshold problem (Section \ref{sub:hardness}).
The key step to obtain the upper bounds of the main result is to show that optimal schedulers have to \emph{minimize} the weight that is expected to still be accumulated after a computable bound of accumulated weight has been exceeded. We call such a bound a \emph{saturation point} (Section \ref{sub:saturation}). Finally, we show how to utilize the saturation point result to solve the threshold problem and to compute the optimal VPE (Section \ref{sub:computation}). Proofs omitted in this section can be found in Appendix \ref{app:VPE}.

\begin{remark}\label{rem:minimal_expectation_VPE}
In the formulation presented here, the goal is to maximize the expected accumulated weight with a penalty for the variance.
All results and proofs in this section, however, hold analogously for the variant $\sup_{\sched} - \mathbb{E}^{\sched}_{\cM}(\rawdiaplus \goal) - \lambda \cdot \Var^{\sched}_{\cM} (\rawdiaplus \goal)$ of the maximal VPE in which the goal is to minimize the expected accumulated weight while receiving  a penalty for the variance. In particular, the same saturation point works and optimal schedulers still have to minimize the expected accumulated weight as soon as the accumulated weight exceeds the saturation point.
\Ende
\end{remark}

\subsection{Existence of optimal deterministic schedulers} \label{sub:deterministic}

We begin this section with a lemma describing how  the variance of accumulated weight behaves under convex combinations of schedulers.
This will allow us to show that the maximal VPE can be approximated by deterministic schedulers with the help of Lemma \ref{lem:ranomization_convex} describing a connection between randomization and convex combinations.
This first lemma follows via basic arithmetic form the fact that the expected values of $\rawdiaplus \goal$ and $\rawdiaplus \goal^2$ depend linearly on the expected frequencies of the state-weight pairs $(\goal,w)$ with $w\in\mathbb{N}$.

\begin{restatable}{lemma}{lemparabolic}\label{lem:parabolic}
	Let $\cM=(S,\Act,P,\sinit,\wgt,\goal)$ be an MDP with non-negative weights and no end components. Let $\sched$ and $\tsched$ be two schedulers for $\cM$. Let $p\in (0,1)$.
	The scheduler $\rsched\eqdef p\cdot\sched\oplus(1-p)\cdot\tsched$  satisfies
	\[
		\Var^{\rsched}_{\cM}(\rawdiaplus\goal) = p\cdot \Var^{\sched}_{\cM}(\rawdiaplus\goal)+(1-p)\cdot \Var^{\tsched}_{\cM}(\rawdiaplus\goal) + p\cdot (1-p)\cdot (\mathbb{E}^{\sched}_{\cM}(\rawdiaplus\goal) - \mathbb{E}^{\tsched}_{\cM}(\rawdiaplus\goal))^2.
	\]
\end{restatable}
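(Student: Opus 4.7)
The key observation suggested by the hint is that both $\mathbb{E}^{\sigma}_{\cM}(\rawdiaplus\goal)$ and $\mathbb{E}^{\sigma}_{\cM}(\rawdiaplus\goal^2)$ are \emph{linear} functionals of the vector of expected frequencies $(\vartheta^{\sigma}_{\goal,w})_{w\in\mathbb{N}}$. Since $\rsched = p\cdot\sched\oplus(1-p)\cdot\tsched$ is defined precisely so that its frequencies are the convex combinations of those of $\sched$ and $\tsched$ (Definition~\ref{def:convex_scheduler} and the lemma preceding it), the first two moments of $\rawdiaplus\goal$ under $\rsched$ will be convex combinations of those under $\sched$ and $\tsched$. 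The variance, however, is quadratic in the first moment, and this is exactly where the additional cross-term $p(1-p)(E_\sched - E_\tsched)^2$ arises.

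\textbf{Step 1: moments are linear in $(\vartheta^{\sigma}_{\goal,w})_w$.} Since $\goal$ is an absorbing trap, the number of visits to $(\goal,w)$ on any maximal path is the indicator of the event that $\goal$ is reached with accumulated weight exactly $w$; hence $\vartheta^{\sigma}_{\goal,w} = \Pr^{\sigma}_{\cM}(\rawdiaplus\goal = w)$. Because the weights are non-negative and $\cM$ has no end components, $\rawdiaplus\goal$ is almost surely defined and takes values in $\mathbb{N}$, so
\[
\mathbb{E}^{\sigma}_{\cM}(\rawdiaplus\goal) \;=\; \sum_{w\in\mathbb{N}} w\cdot \vartheta^{\sigma}_{\goal,w},
\qquad
\mathbb{E}^{\sigma}_{\cM}(\rawdiaplus\goal^2) \;=\; \sum_{w\in\mathbb{N}} w^2\cdot \vartheta^{\sigma}_{\goal,w},
\]
for every scheduler $\sigma$.

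\textbf{Step 2: apply the defining property of $\rsched$.} By the choice of $\rsched$ we have $\vartheta^{\rsched}_{\goal,w} = p\cdot \vartheta^{\sched}_{\goal,w} + (1-p)\cdot \vartheta^{\tsched}_{\goal,w}$ for every $w$. Substituting into the formulas of Step~1 gives
\[
\mathbb{E}^{\rsched}_{\cM}(\rawdiaplus\goal) \;=\; p\cdot\mathbb{E}^{\sched}_{\cM}(\rawdiaplus\goal) + (1-p)\cdot\mathbb{E}^{\tsched}_{\cM}(\rawdiaplus\goal),
\]
and the analogous identity for $\rawdiaplus\goal^2$; in particular, the second moment under $\rsched$ is finite whenever it is under $\sched$ and $\tsched$, so all quantities below are well-defined.

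\textbf{Step 3: assemble the variance.} Write $E_\sigma := \mathbb{E}^{\sigma}_{\cM}(\rawdiaplus\goal)$ and $V_\sigma := \Var^{\sigma}_{\cM}(\rawdiaplus\goal) = \mathbb{E}^{\sigma}_{\cM}(\rawdiaplus\goal^2) - E_\sigma^2$ for $\sigma\in\{\sched,\tsched\}$. Using $\Var^{\rsched}_{\cM}(\rawdiaplus\goal) = \mathbb{E}^{\rsched}_{\cM}(\rawdiaplus\goal^2) - (\mathbb{E}^{\rsched}_{\cM}(\rawdiaplus\goal))^2$ and the convex-combination identities of Step~2, expand
\[
\Var^{\rsched}_{\cM}(\rawdiaplus\goal) \;=\; p(V_\sched + E_\sched^2) + (1-p)(V_\tsched + E_\tsched^2) - \bigl(pE_\sched + (1-p)E_\tsched\bigr)^2,
\]
and observe that $p E_\sched^2 + (1-p)E_\tsched^2 - (pE_\sched + (1-p)E_\tsched)^2 = p(1-p)(E_\sched - E_\tsched)^2$, yielding the claimed formula.

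\textbf{Main obstacle.} There is no genuine conceptual difficulty; the proof is basic arithmetic once Step~1 is in hand. The only subtlety worth spelling out is the identity $\vartheta^{\sigma}_{\goal,w} = \Pr^{\sigma}_{\cM}(\rawdiaplus\goal = w)$, which relies on $\goal$ being a trap (so each state-weight pair $(\goal,w)$ is visited at most once on any path) and on the absence of end components (so that the goal is reached almost surely and the sums above are well-defined with $w$ ranging over $\mathbb{N}$).
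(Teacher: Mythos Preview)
Your proof is correct and follows essentially the same approach as the paper: establish linearity of the first two moments in the expected frequencies $(\vartheta^{\sigma}_{\goal,w})_w$, deduce that both moments under $\rsched$ are convex combinations, and then expand $\Var = \mathbb{E}[X^2]-\mathbb{E}[X]^2$. Your Step~3 is in fact tidier than the paper's, which carries out the same computation via a longer explicit expansion rather than invoking the identity $p\,a^2+(1-p)\,b^2-(pa+(1-p)b)^2=p(1-p)(a-b)^2$.
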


\begin{proof}[Proof sketch]
The claim follows from straight-forward calculations using that the expected values of $\rawdiaplus\goal$ and $\rawdiaplus\goal^2$ depend linearly on the expected frequencies of the state-weight pairs $(\goal,w)$ with $w\in\mathbb{N}$.
\end{proof}

\begin{remark}\label{rem:parabolic}
	Given two schedulers $\sched$ and $\sched^\prime$ under which the expectation and variance are $(\eta,\nu)$ and $(\eta^\prime,\nu^\prime)$, respectively, such that $\eta<\eta^\prime$,
	there is a unique convex combination $\tsched$ of the two schedulers with expectation $x$ for all $x\in [\eta,\eta^\prime]$.
	Viewing the variance of these convex combinations as a function $V:[\eta,\eta^\prime]\to \mathbb{R}$, we can observe the following using the previous Lemma \ref{lem:parabolic}:
	\[
		V(x)= \nu+\frac{x-\eta}{\eta^\prime-\eta}\cdot(\nu^\prime-\nu) + \frac{x-\eta}{\eta^\prime-\eta}\cdot \frac{\eta^\prime-x}{\eta^\prime-\eta} \cdot (\eta^\prime-\eta)^2 = \nu+\frac{x-\eta}{\eta^\prime-\eta}\cdot(\nu^\prime-\nu)  + (x-\eta)\cdot (\eta^\prime - x).
	\]
	The coefficient before $x^2$ in this quadratic polynomial hence is always $-1$.
	\Ende
\end{remark}

The following lemma stating the continuity of the VPE will be useful in several ways: If we manipulate schedulers at one state-weight pair at a time, we can reason about the scheduler we obtain in the limit after manipulating the scheduler at all state-weight pairs, e.g., in the proof of Theorem \ref{thm:determinist_scheduler} below. Further, it will allow us to prove that   there is  an optimal scheduler, i.e., that the supremum in the definition of 
$\VPE[\lambda]^{\max}_{\cM}$ is in fact a maximum.

\begin{restatable}[Continuity of VPE]{lemma}{lemmacontinuity}
	\label{lem:continuous}
	Let $\cM$ and $\lambda>0$ be as above.
	The variance-penalized expectation as a function from weight-based schedulers to $\mathbb{R}$ is (uniformly) continuous in the following sense:
	Given  $\varepsilon>0$, there is a natural number $N_\varepsilon$ such that for all weight-based schedulers $\sched$ and $\tsched$ that agree on all state-weight pairs $(s,w)$ with $w\leq N_\varepsilon$, we have
	\[
		\Big| \VPE[\lambda]^{\sched}_{\cM} - \VPE[\lambda]^{\tsched}_{\cM} \Big| < \varepsilon.
	\]
\end{restatable}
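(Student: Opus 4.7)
The proof plan is to split the random variable $X \eqdef \rawdiaplus\goal$ into the parts where $X \leq N$ and $X > N$, exploit the fact that weight-based schedulers agreeing on state-weight pairs of weight $\leq N$ induce identical distributions on the event $\{X \leq N\}$, and then control the tails uniformly in the scheduler.

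The central observation enabling the uniform tail bound is that $\cM$ has no end components (by assumption). Therefore there exists $k\in\Nat$ and $p>0$ such that from every state, under every scheduler, the goal $\goal$ is reached within $k$ steps with probability at least $p$. Iterating this yields, for the number of steps $T$ until $\goal$ is reached, a uniform geometric bound $\Pr^{\sched}_{\cM}(T > kn) \leq (1-p)^n$ for all schedulers $\sched$ and all $n\in\Nat$. Setting $W \eqdef \max_{(s,a)}\wgt(s,a)$, we have $X \leq W\cdot T$ almost surely, so $\Pr^{\sched}_{\cM}(X > N) \leq (1-p)^{\lfloor N/(kW)\rfloor}$, also uniformly in $\sched$. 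Consequently, $\mathbb{E}^{\sched}_{\cM}(X^2\cdot \mathds{1}_{X>N})$ and $\mathbb{E}^{\sched}_{\cM}(X\cdot \mathds{1}_{X>N})$ can both be bounded by a quantity $\varepsilon_1(N)$ that is independent of $\sched$ and tends to $0$ as $N\to\infty$, while $\mathbb{E}^{\sched}_{\cM}(X)$ and $\mathbb{E}^{\sched}_{\cM}(X^2)$ are bounded by a constant $C$ independent of $\sched$.

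Next, I would observe the crucial agreement property: if $\sched$ and $\tsched$ are weight-based and agree on every state-weight pair $(s,w)$ with $w\leq N$, then every finite path $\fpath$ with $\wgt(\fpath)\leq N$ visits only such state-weight pairs (because weights are non-negative), and hence $\Pr^{\sched}_{\cM}(\Cyl(\fpath))=\Pr^{\tsched}_{\cM}(\Cyl(\fpath))$ for every finite path $\fpath$ ending in $\goal$ with $\wgt(\fpath)\leq N$. In particular,
\[
	\mathbb{E}^{\sched}_{\cM}(X\cdot \mathds{1}_{X\leq N}) = \mathbb{E}^{\tsched}_{\cM}(X\cdot \mathds{1}_{X\leq N}), \qquad \mathbb{E}^{\sched}_{\cM}(X^2\cdot \mathds{1}_{X\leq N}) = \mathbb{E}^{\tsched}_{\cM}(X^2\cdot \mathds{1}_{X\leq N}).
\]
Combining this with the tail bounds yields $|\mathbb{E}^{\sched}_{\cM}(X) - \mathbb{E}^{\tsched}_{\cM}(X)| \leq 2\varepsilon_1(N)$ and an analogous inequality for $X^2$. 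For the squared-expectation term, the factorization $a^2-b^2 = (a-b)(a+b)$ together with the uniform bound $|\mathbb{E}^{\sched}_{\cM}(X)+\mathbb{E}^{\tsched}_{\cM}(X)|\leq 2C$ gives $|(\mathbb{E}^{\sched}_{\cM}(X))^2 - (\mathbb{E}^{\tsched}_{\cM}(X))^2|\leq 4C\varepsilon_1(N)$.

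Putting the three estimates together using the identity $\VPE[\lambda]^{\sched}_{\cM}=\mathbb{E}^{\sched}_{\cM}(X) - \lambda \mathbb{E}^{\sched}_{\cM}(X^2) + \lambda (\mathbb{E}^{\sched}_{\cM}(X))^2$ yields
\[
	\bigl|\VPE[\lambda]^{\sched}_{\cM} - \VPE[\lambda]^{\tsched}_{\cM}\bigr| \leq (2+2\lambda + 4\lambda C)\cdot \varepsilon_1(N).
\]
Since $\varepsilon_1(N)\to 0$ as $N\to\infty$, one can pick $N_\varepsilon$ large enough for the right-hand side to be smaller than $\varepsilon$. The main obstacle is the uniform tail estimate; every other step is purely arithmetic once the no-end-component assumption is leveraged to extract a geometric decay rate that does not depend on the scheduler.
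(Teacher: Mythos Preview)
Your proof is correct and follows essentially the same approach as the paper: both exploit the no-end-component assumption to obtain a uniform geometric tail bound on the accumulated weight, use the non-negativity of weights to conclude that schedulers agreeing on state-weight pairs up to $N$ induce identical distributions on $\{\rawdiaplus\goal\leq N\}$, and then show continuity of the first two moments, from which continuity of the VPE follows. Your treatment is slightly more explicit in handling the squared-expectation term via the factorisation $a^2-b^2=(a-b)(a+b)$ and in bounding the truncated tail moments directly rather than via conditional expectations, but these are presentational differences only.
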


\begin{proof}[Proof sketch]
The claim follows from the fact that the probability that a high amount of weight $w$ is accumulated under any scheduler decreases exponentially as $w$ tends to $\infty$. 
\end{proof}

For the final ingredient to show that deterministic schedulers approximate the optimal VPE, we take a closer look at the relation of randomization to convex combinations of schedulers. For the following lemma,  let $\sched$ be a weight-based scheduler for an MDP $\cM$ as before. Assume that there is a state-weight pair $(s,w)\in S\times \mathbb{N}$ reachable under $\sched$ such that $\sched$ chooses two different actions $\alpha$ and $\beta$ with probabilities $q$ and $1-q$, respectively, for some $q\in (0,1)$.
    Let $\sched_\alpha$ be the scheduler that agrees with $\sched$ on all state-weight pairs except for $(s,w)$ and that chooses $\alpha$ with probability $1$ at $(s,w)$. Define $\sched_\beta$ analogously. 
    The technical proof of the following lemma can be found  in Appendix \ref{app:VPE}.

\begin{restatable}{lemma}{lemmaRandomizationConvex}\label{lem:ranomization_convex}
    Let $\cM$, $\sched$, $\sched_\alpha$, $\sched_\beta$, and $q$ be as above.
    There is a value $p\in(0,1)$ such that the expected frequencies of all state-weight pairs are the same under $\sched$ and
    $p\cdot \sched_\alpha \oplus (1-p)\cdot \sched_{\beta}$.
\end{restatable}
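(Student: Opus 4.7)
The natural guess is $p = q$, and the plan is to verify that under this choice, $\sched$ and $p \cdot \sched_\alpha \oplus (1-p) \cdot \sched_\beta$ induce the same expected frequencies $\vartheta_{t,v}$ at every state-weight pair $(t,v) \in S \times \mathbb{N}$. Since the frequencies of $p \cdot \sched_\alpha \oplus (1-p) \cdot \sched_\beta$ are defined as $p \cdot \vartheta^{\sched_\alpha}_{t,v} + (1-p)\cdot \vartheta^{\sched_\beta}_{t,v}$, it suffices to show
\[
   \vartheta^{\sched}_{t,v} \;=\; q \cdot \vartheta^{\sched_\alpha}_{t,v} + (1-q)\cdot \vartheta^{\sched_\beta}_{t,v}.
\]

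The key structural observation is that because weights are non-negative and $\cM$ has no end components (hence no $0$-cycles), every state-weight pair is visited at most once on every infinite path. Consequently, for any scheduler $\rsched$ the frequency $\vartheta^{\rsched}_{t,v}$ equals the probability $\Pr^{\rsched}_{\cM}(\text{visit } (t,v))$, and we may split this probability along the first visit to $(s,w)$ into two disjoint contributions: (i) the probability of reaching $(t,v)$ without ever visiting $(s,w)$, and (ii) the probability of first visiting $(s,w)$ and subsequently reaching $(t,v)$.

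For contribution (i), only the scheduler's behavior on paths that avoid $(s,w)$ matters, and $\sched$, $\sched_\alpha$, $\sched_\beta$ all agree on such paths (they agree everywhere except at $(s,w)$ itself). Hence contribution (i) is the same for all three schedulers. For contribution (ii), factor it as $\Pr^{\rsched}(\text{visit } (s,w)) \cdot \Pr^{\rsched}(\text{visit } (t,v) \mid \text{visit } (s,w))$; the first factor again coincides for the three schedulers. For the conditional factor under $\sched$, one observes that once $(s,w)$ is reached, $\sched$ picks $\alpha$ with probability $q$ or $\beta$ with probability $1-q$; because weights are non-negative and $(s,w)$ is visited at most once, the post-$(s,w)$ behavior after choosing $\alpha$ under $\sched$ coincides with that of $\sched_\alpha$, and analogously for $\beta$. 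This yields $\Pr^{\sched}(\text{visit }(t,v) \mid \text{visit }(s,w)) = q \cdot \Pr^{\sched_\alpha}(\cdots) + (1-q)\cdot \Pr^{\sched_\beta}(\cdots)$, and summing (i) and (ii) gives the desired identity with $p = q$.

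The main obstacle is the careful justification that, conditional on visiting $(s,w)$, the distribution over continuation paths under $\sched$ is the $q$-to-$(1{-}q)$ mixture of those under $\sched_\alpha$ and $\sched_\beta$. This relies crucially on the no-revisit property of $(s,w)$ (guaranteed by non-negative weights plus absence of end components), together with $\sched$ being weight-based so that the behavior at any state-weight pair other than $(s,w)$ is the same under all three schedulers. Once these ingredients are in place the verification is a short direct computation of the decomposition above, and the lemma follows.
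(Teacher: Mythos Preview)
Your argument rests on the claim that ``no end components'' implies ``no $0$-cycles'', and hence that every state-weight pair is visited at most once. This implication is false. Consider states $s,t$ where the unique action from $s$ has weight $0$ and leads to $t$ with probability $1/2$ and to $\goal$ with probability $1/2$, and the unique action from $t$ has weight $0$ and returns to $s$ with probability $1$. The set $\{s,t\}$ is not an end component (the only action at $s$ leaves the set with positive probability), yet $s\to t\to s$ is a $0$-cycle, so the state-weight pair $(s,0)$ can be visited arbitrarily many times along a path. The preprocessing by the spider construction eliminates $0$-end-components, not $0$-cycles.

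Once $(s,w)$ can be revisited, your choice $p=q$ no longer works. Writing $r_\alpha$ (resp.\ $r_\beta$) for the probability that, after taking $\alpha$ (resp.\ $\beta$) at $(s,w)$, the process returns to $(s,w)$ under the common behaviour of the three schedulers, and $t$ for the probability of the first visit to $(s,w)$, one gets $\vartheta^{\sched}_{s,w}=t/(1-qr_\alpha-(1{-}q)r_\beta)$, $\vartheta^{\sched_\alpha}_{s,w}=t/(1-r_\alpha)$, $\vartheta^{\sched_\beta}_{s,w}=t/(1-r_\beta)$, and a short computation yields
\[
q\,\vartheta^{\sched_\alpha}_{s,w}+(1{-}q)\,\vartheta^{\sched_\beta}_{s,w}-\vartheta^{\sched}_{s,w}
=\frac{t\,q(1{-}q)(r_\alpha-r_\beta)^2}{(1-r_\alpha)(1-r_\beta)\bigl(1-qr_\alpha-(1{-}q)r_\beta\bigr)}\,>\,0
\]
whenever $r_\alpha\neq r_\beta$. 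So the correct $p$ is in general different from $q$. The paper's proof proceeds exactly by computing these return probabilities, solving for the unique $p$ that matches $\vartheta_{s,w}$, and then checking that this same $p$ also matches the expected action frequencies $\vartheta_{s,w,\alpha}$ and $\vartheta_{s,w,\beta}$; agreement at all other state-weight pairs then follows since the three schedulers coincide away from $(s,w)$. Your before/after-$(s,w)$ decomposition is the right intuition, but it has to be applied to the \emph{first} visit and iterated geometrically, which is precisely what $r_\alpha,r_\beta$ encode.
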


\begin{restatable}[Deterministic schedulers approximate optimal VPE]{theorem}{theoremsupremumdeterministic}\label{thm:determinist_scheduler}
	Let $\cM$ be an MDP with non-negative weights and without end components and let $\lambda>0$. For each scheduler $\sched$, there is a deterministic weight-based scheduler $\tsched$ with
	\[
		\VPE[\lambda]^{\tsched}_{\cM} \geq \VPE[\lambda]^{\sched}_{\cM}.
	\]
\end{restatable}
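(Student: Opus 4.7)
The plan is to start from a weight-based representative of $\sched$ (which exists by Corollary~\ref{cor:weight-based}) and iteratively derandomize one state-weight pair at a time, using Lemma~\ref{lem:ranomization_convex} together with Lemma~\ref{lem:parabolic}, and then to pass to the limit using the continuity result of Lemma~\ref{lem:continuous}. The conceptual driver is that the quadratic penalty term in the variance formula for a convex combination of two schedulers forces the VPE of the mixture to lie no higher than the maximum of the VPEs of the two combined schedulers.

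For the local step, I would isolate a state-weight pair $(s,w)$ at which $\sched$ randomizes, pick two actions $\alpha,\beta$ that both receive positive probability at $(s,w)$, and let $\sched_\alpha,\sched_\beta$ be the two modifications of $\sched$ that pick $\alpha$ resp.\ $\beta$ deterministically at $(s,w)$ and agree with $\sched$ everywhere else. Lemma~\ref{lem:ranomization_convex} would then yield $p\in(0,1)$ such that $\sched$ has the same state-weight frequencies as $p\cdot\sched_\alpha\oplus(1-p)\cdot\sched_\beta$. Because the expected value and the variance of $\rawdiaplus\goal$ depend only on the expected frequencies of the pairs $(\goal,w')$, Lemma~\ref{lem:parabolic} combined with linearity of the expectation gives
\[
\VPE[\lambda]^{\sched}_{\cM} = p\cdot\VPE[\lambda]^{\sched_\alpha}_{\cM} + (1-p)\cdot\VPE[\lambda]^{\sched_\beta}_{\cM} - \lambda\, p(1-p)\bigl(\mathbb{E}^{\sched_\alpha}_{\cM}(\rawdiaplus\goal) - \mathbb{E}^{\sched_\beta}_{\cM}(\rawdiaplus\goal)\bigr)^2,
\]
and hence $\VPE[\lambda]^{\sched}_{\cM}\leq\max\{\VPE[\lambda]^{\sched_\alpha}_{\cM},\VPE[\lambda]^{\sched_\beta}_{\cM}\}$ since $\lambda>0$. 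When $\sched$ randomizes over more than two actions at $(s,w)$, I would reduce to the two-action case iteratively, removing one action from the support each round until $(s,w)$ carries a deterministic choice. I expect this reduction (matching the precise two-action hypothesis of Lemma~\ref{lem:ranomization_convex}) to be the most delicate bookkeeping step.

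Next I would fix an enumeration $(s_1,w_1),(s_2,w_2),\ldots$ of all state-weight pairs by non-decreasing weight and apply the local step successively, producing a sequence of weight-based schedulers $\sched=\sched_0,\sched_1,\sched_2,\ldots$ such that $\sched_i$ is deterministic at $(s_j,w_j)$ for every $j\leq i$, coincides with $\sched_{i-1}$ at every pair distinct from $(s_i,w_i)$, and satisfies $\VPE[\lambda]^{\sched_i}_{\cM}\geq\VPE[\lambda]^{\sched}_{\cM}$. Since the choice at $(s_j,w_j)$ is never revisited for $i>j$, it is consistent to define a deterministic weight-based scheduler $\tsched$ by $\tsched(s_j,w_j)\eqdef\sched_j(s_j,w_j)$; by construction $\tsched$ agrees with $\sched_i$ on every pair processed up to step $i$.

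To transfer the inequality to the limit, I would fix $\varepsilon>0$, take $N_\varepsilon$ from Lemma~\ref{lem:continuous}, and let $k$ be the largest index with $w_k\leq N_\varepsilon$; this is finite since only finitely many state-weight pairs have weight at most $N_\varepsilon$. Then $\sched_k$ and $\tsched$ coincide on every state-weight pair of weight at most $N_\varepsilon$, so by continuity $|\VPE[\lambda]^{\sched_k}_{\cM}-\VPE[\lambda]^{\tsched}_{\cM}|<\varepsilon$, and combining with $\VPE[\lambda]^{\sched_k}_{\cM}\geq\VPE[\lambda]^{\sched}_{\cM}$ yields $\VPE[\lambda]^{\tsched}_{\cM}>\VPE[\lambda]^{\sched}_{\cM}-\varepsilon$. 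Since $\varepsilon$ was arbitrary, this gives the theorem. Beyond the multi-action case already flagged above, everything else is straightforward plumbing between the three cited lemmas.
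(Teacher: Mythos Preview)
Your proposal is correct and follows essentially the same route as the paper: reduce to a weight-based scheduler via Corollary~\ref{cor:weight-based}, derandomize one state-weight pair at a time using Lemma~\ref{lem:ranomization_convex} and Lemma~\ref{lem:parabolic} to conclude that $\VPE[\lambda]^{\sched}_{\cM}\leq\max\{\VPE[\lambda]^{\sched_\alpha}_{\cM},\VPE[\lambda]^{\sched_\beta}_{\cM}\}$, handle the multi-action case by iterated reduction, and pass to the limit with Lemma~\ref{lem:continuous}. The only cosmetic difference is that the paper treats the multi-action case by letting the second ``action'' $\beta$ stand for the residual distribution over $\Act(s)\setminus\{\alpha\}$ (arguing that the proof of Lemma~\ref{lem:ranomization_convex} goes through verbatim for such a pseudo-action), whereas you phrase it as repeatedly peeling off one action; both amount to the same finite induction and you rightly flag it as the only nontrivial bookkeeping.
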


\begin{proof}[Proof sketch]
W.l.o.g., we can assume that $\sched$ is weight-based by Corollary \ref{cor:weight-based}. 
At a single state-weight pair $(s,w)$ at which $\sched$ makes use of randomization between, we can (potentially repeatedly) apply Lemma \ref{lem:ranomization_convex} and Lemma \ref{lem:parabolic} to find a scheduler $\sched^\prime$ that does not make use of this randomization but satisfies $\VPE[\lambda]^{\sched^\prime}_{\cM} \geq \VPE[\lambda]^{\sched}_{\cM}$. Going through all state-weight pairs in this fashion, we can construct an infinite sequence of schedulers with non-decreasing VPE in which randomization is successively removed at all state-weight pairs. In the limit, we obtain a well defined deterministic weight-based scheduler $\tsched$.  Lemma \ref{lem:continuous} allows us to conclude that $\VPE[\lambda]^{\tsched}_{\cM} \geq \VPE[\lambda]^{\sched}_{\cM}$.
\end{proof}

\noindent
In the definition of the maximal variance-penalized expectation
$
	\VPE[\lambda]^{\max}_{\cM} = \sup_{\sched}\VPE[\lambda]^{\sched}_{\cM}
$,
it is  sufficient to let the supremum range over deterministic weight-based schedulers $\sched$ in the light of this theorem.
For the proof of the existence of optimal schedulers, we make use of an analytic argument: Continuous functions on compact space obtain their maximum.
The continuity shown in Lemma \ref{lem:continuous} applied to the space of  deterministic  weight-based schedulers can be reformulated as continuity with respect to a metric on this space.
	Namely, we define the metric $d_{\cM}$ on the set of deterministic weight-based schedulers as follows: Given two deterministic weight-based schedulers $\sched$ and $\tsched$ for $\cM$, first let
	\[
		m(\sched,\tsched) \eqdef \min \{w \mid \text{there is a state $s\in S$ with } \sched(s,w)\not=\tsched(s,w)\}.
	\]
	We then define $d_{\cM}(\sched,\tsched) \eqdef 2^{-m(\sched,\tsched)}$.
This metric indeed turns the set of deterministic weight-based schedulers into a compact space as shown in \cite{fossacs2019}:

\begin{restatable}[Compactness of the space of deterministic weight-based schedulers \cite{fossacs2019}]{lemma}{lemmacompactness}
	\label{lem:compact}
	Let $\cM$ be as above. The space of all deterministic weight-based schedulers with the topology induced by the metric $d_{\cM}$ is compact.
\end{restatable}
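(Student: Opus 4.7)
The plan is to establish compactness via sequential compactness, which suffices since $d_\cM$ is a metric. A deterministic weight-based scheduler is a function $\sched\colon S\times\Nat\to\Act$ with $\sched(s,w)\in\Act(s)$ at every state-weight pair, so the space embeds into the product $\prod_{(s,w)\in S\times\Nat}\Act(s)$ of finite (hence compact) factors. One could simply check that $d_\cM$ induces the product topology and invoke Tychonoff, but I will spell out the direct diagonal argument since it is concrete and self-contained.

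Enumerate the state-weight pairs $(s,w)$ in order of non-decreasing $w$ (with some fixed ordering of $S$ used as tie-breaker). Given any sequence $(\sched_n)_{n\in\Nat}$ of deterministic weight-based schedulers, I would inductively build nested subsequences $(\sched^{(k)}_n)_{n\in\Nat}$ with the property that all schedulers in $(\sched^{(k)}_n)_{n\in\Nat}$ agree on the choices made at every state-weight pair with weight strictly less than~$k$. For the inductive step, there are only finitely many state-weight pairs with weight $k$, namely at most $|S|$ of them, and at each such pair there are at most $|\Act|$ possible deterministic choices; hence the $|S|$-tuple of choices at weight $k$ takes only finitely many values across $(\sched^{(k)}_n)_{n\in\Nat}$, and the pigeonhole principle yields a further subsequence $(\sched^{(k+1)}_n)_{n\in\Nat}$ that additionally stabilizes on these pairs. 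The diagonal subsequence $(\sched^{(k)}_k)_{k\in\Nat}$ then has the property that, for every state-weight pair $(s,w)$, the value $\sched^{(k)}_k(s,w)$ is constant for $k$ large enough. Define $\tsched(s,w)$ to be this eventually constant value; by construction $\tsched$ is a valid deterministic weight-based scheduler.

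To verify $d_\cM$-convergence $\sched^{(k)}_k\to\tsched$, given $\varepsilon>0$ pick $N$ with $2^{-N}<\varepsilon$. For $k\geq N$, the scheduler $\sched^{(k)}_k$ belongs to $(\sched^{(N)}_n)_{n\in\Nat}$ and therefore agrees with $\tsched$ on every state-weight pair of weight less than $N$, giving $m(\sched^{(k)}_k,\tsched)\geq N$ and hence $d_\cM(\sched^{(k)}_k,\tsched)\leq 2^{-N}<\varepsilon$. Since sequential compactness implies compactness in metric spaces, the claim follows. I do not foresee a genuine obstacle here: the only delicate point is the bookkeeping in the diagonal construction, and this is routine. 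As the result is attributed to \cite{fossacs2019}, the argument merely needs to be recorded for completeness.
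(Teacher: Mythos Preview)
Your argument is correct and entirely standard: the space of deterministic weight-based schedulers is a countable product of finite sets, and the metric $d_{\cM}$ is precisely the one inducing the product topology, so compactness follows either from Tychonoff or from the diagonal extraction you spell out. The paper does not actually supply its own proof of this lemma; it merely attributes the result to \cite{fossacs2019} and states it for reference. Your write-up therefore fills in what the paper leaves to the citation, and there is no discrepancy to discuss.
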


\begin{theorem}[Existence of an optimal deterministic weight-based scheduler]
	Let $\cM$ and $\lambda>0$ be as above. There is a deterministic weight-based scheduler $\sched$ with
	\[
		\VPE[\lambda]_{\cM}^{\sched}= \VPE[\lambda]_{\cM}^{\max}.
	\]
\end{theorem}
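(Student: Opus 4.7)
The plan is to combine the three ingredients the authors have just established: (i) the supremum is realized along a sequence of deterministic weight-based schedulers (Theorem \ref{thm:determinist_scheduler}); (ii) the space of such schedulers is compact in the metric $d_{\cM}$ (Lemma \ref{lem:compact}); and (iii) the functional $\sched \mapsto \VPE[\lambda]_{\cM}^{\sched}$ is continuous in the sense of Lemma \ref{lem:continuous}. With these in hand, the statement follows from the standard fact that a continuous function on a compact space attains its supremum.

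More concretely, I would proceed as follows. First, by Theorem \ref{thm:determinist_scheduler}, for every scheduler there is a deterministic weight-based scheduler achieving at least as large a VPE, so
\[
\VPE[\lambda]_{\cM}^{\max} \;=\; \sup_{\sched \text{ det.\ weight-based}} \VPE[\lambda]_{\cM}^{\sched}.
\]
Pick a sequence $(\sched_n)_{n \in \mathbb{N}}$ of deterministic weight-based schedulers with $\VPE[\lambda]_{\cM}^{\sched_n} \to \VPE[\lambda]_{\cM}^{\max}$. By compactness (Lemma \ref{lem:compact}), this sequence has a subsequence $(\sched_{n_k})_{k \in \mathbb{N}}$ converging in the metric $d_{\cM}$ to some deterministic weight-based scheduler $\sched$.

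Next, I would translate Lemma \ref{lem:continuous} into continuity with respect to $d_{\cM}$: given $\varepsilon > 0$, let $N_\varepsilon$ be as supplied by the lemma. If $d_{\cM}(\sched_{n_k}, \sched) < 2^{-N_\varepsilon}$, then by definition of $d_{\cM}$ the schedulers $\sched_{n_k}$ and $\sched$ agree on every state-weight pair $(s,w)$ with $w \leq N_\varepsilon$, so $\bigl| \VPE[\lambda]^{\sched_{n_k}}_{\cM} - \VPE[\lambda]^{\sched}_{\cM}\bigr| < \varepsilon$. Hence $\VPE[\lambda]^{\sched_{n_k}}_{\cM} \to \VPE[\lambda]^{\sched}_{\cM}$, and combining with $\VPE[\lambda]^{\sched_{n_k}}_{\cM} \to \VPE[\lambda]_{\cM}^{\max}$ yields $\VPE[\lambda]^{\sched}_{\cM} = \VPE[\lambda]_{\cM}^{\max}$, as required.

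There is no real obstacle here; the only subtlety is the bookkeeping step linking the quantifier-style continuity in Lemma \ref{lem:continuous} to topological continuity under $d_{\cM}$, which is handled by the choice $d_{\cM}(\sched,\tsched) = 2^{-m(\sched,\tsched)}$. Everything else is a direct invocation of the preceding results, so the proof should be quite short.
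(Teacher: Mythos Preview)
Your proposal is correct and follows exactly the paper's approach: the paper's proof is a one-liner invoking Lemma~\ref{lem:continuous}, Theorem~\ref{thm:determinist_scheduler}, and Lemma~\ref{lem:compact} together with the fact that continuous functions on compact spaces attain their maximum. You have simply spelled out the sequential-compactness and continuity bookkeeping in more detail than the paper does.
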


\begin{proof}
	The claim follows from  Lemma \ref{lem:continuous}, Theorem \ref{thm:determinist_scheduler}, and Lemma \ref{lem:compact}, as continuous functions on compact spaces obtain their maximum.
\end{proof}

\subsection{Hardness of the threshold problem} \label{sub:hardness}

The result that the maximal variance-penalized expectation can be achieved by a deterministic scheduler can be used for the following hardness result:

\begin{restatable}{theorem}{theoremhardnessVPE}
	Given an MDP $\cM$ with non-negative weights and two rationals $\lambda,\vartheta>0$, deciding whether
	$\VPE[\lambda]^{\max}_{\cM} \geq \vartheta$ is EXPTIME-hard. Furthermore, for acyclic MDPs $\cM$, the problem is PSPACE-hard.
\end{restatable}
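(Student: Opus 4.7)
The plan is to prove hardness by polynomial-time reductions from known hard threshold problems on MDPs with binary-encoded weight bounds. For the EXPTIME-hardness bound, a natural source is the quantile threshold problem for MDPs with non-negative integer weights: given an MDP $\cN$, a target set $T$, a probability $p\in(0,1)$, and a bound $B$ encoded in binary, decide whether there is a scheduler under which the probability of reaching $T$ with accumulated weight at most $B$ is at least $p$. This problem is known to be EXPTIME-hard, and its acyclic restriction is PSPACE-hard, supplying both statements.

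From such an instance the plan is to construct an MDP $\cM$ with non-negative weights in which the variance of $\rawdiaplus\goal$ faithfully tracks the quantile probability achieved in $\cN$. Concretely, I would compose $\cN$ with a deterministic ``finalization'' gadget attached at every state of $T$: upon entering $T$ with accumulated weight $w$, the run is directed through one of two deterministic chains depending on whether $w\leq B$ or $w>B$, and the chosen chain drives the accumulated weight up to one of two fixed final values $v_0$ and $v_1$ before reaching $\goal$. The resulting distribution of $\rawdiaplus\goal$ is then two-valued: it equals $v_0$ with probability $q$, where $q$ is the quantile probability achieved in $\cN$, and $v_1$ with probability $1-q$ (for schedulers that reach $\goal$ almost surely). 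Its variance is a simple quadratic in $q$ and its expectation is linear in $q$, so picking $\lambda$ and $\vartheta$ appropriately yields the equivalence $\VPE[\lambda]^{\max}_{\cM}\geq \vartheta \iff q\geq p$.

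Correctness of the reduction would rest on Theorem~\ref{thm:determinist_scheduler}: the supremum in $\VPE[\lambda]^{\max}_{\cM}$ is attained by a deterministic weight-based scheduler, which projects to a weight-based scheduler on $\cN$, and such schedulers suffice for the source problem since the accumulated-weight counter alone distinguishes ``$\leq B$'' from ``$>B$''. The construction is polynomial in the size of the input, uses only non-negative weights, and preserves acyclicity, so running the same reduction on the PSPACE-hard acyclic source yields the PSPACE-hardness statement for acyclic MDPs.

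The main obstacle I expect is quantitative. The gap between $\VPE[\lambda]^{\max}_{\cM}$ on yes- and no-instances must be bounded below by some positive $\delta$ of polynomially many bits, so that the rational $\vartheta$ can be specified succinctly; establishing this requires careful book-keeping of the variance contribution of the gadget and a suitable choice of $v_0$, $v_1$, and $\lambda$ ensuring that the two VPE regimes are separated by an inverse-polynomial margin. A secondary subtlety is preventing a scheduler from manufacturing additional variance within the gadget itself; this is handled by making the post-$T$ chains deterministic, so that all variance of $\rawdiaplus\goal$ is generated exclusively by the two-branch split determined by the scheduler's behavior in $\cN$.
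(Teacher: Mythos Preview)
Your reduction has a structural gap: the ``finalization gadget'' you describe cannot be built inside a polynomial-size MDP. You write that upon entering $T$ with accumulated weight $w$, the run is directed through one of two deterministic chains depending on whether $w\leq B$ or $w>B$, and that the chosen chain drives the accumulated weight up to a fixed final value $v_0$ or $v_1$. Both operations---branching on the comparison $w\leq B$ and topping the weight up to a prescribed target (which requires adding $v_0-w$ or $v_1-w$)---depend on the current accumulated weight $w$. In the MDP model, transitions depend only on the current state and chosen action, not on the path weight; the only way to implement weight-dependent behaviour is to encode $w$ in the state space. Since $B$ is given in binary, $w$ ranges over exponentially many values, so the gadget blows up the state space exponentially and the reduction is not polynomial. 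If instead you let the \emph{scheduler} pick the chain (exploiting that optimal schedulers are weight-based, via Theorem~\ref{thm:determinist_scheduler}), the scheduler is free to select whichever chain maximizes the VPE regardless of whether $w\leq B$, severing the link with the quantile probability $q$; and even then the top-up to the fixed values $v_0,v_1$ still cannot be realized, so the outcome distribution will not be two-valued and your variance formula breaks.

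The paper avoids this obstacle by reducing from a different Haase--Kiefer problem: given $\cM$ and $T$ in binary, decide whether some scheduler satisfies $\Pr^{\sched}_{\cM}(\rawdiaplus\goal = T)=1$. The gadget is a single probabilistic split at a fresh initial state: with probability $\tfrac12$ move to a state that reaches $\goal$ with weight exactly $T$, and with probability $\tfrac12$ enter the original MDP. If the source instance is positive, the variance is zero and $\VPE[\lambda]^{\sched}_{\cM'}=T$. If it is negative, the $\tfrac12$-mass pinned at weight $T$ forces strictly positive variance under every scheduler, and choosing $\lambda$ polynomially large (an explicit value in $n$, $W$, and the minimum transition probability) drives $\VPE[\lambda]^{\max}_{\cM'}$ strictly below $T$. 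The point is that no weight tracking is needed: the reduction asks whether the variance can be made \emph{exactly zero}, not whether a quantile threshold is met, so the gadget never has to inspect the accumulated weight.
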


\begin{proof}[Proof sketch]
We reduce from the following problem which is shown to be EXPTIME-hard in general and PSPACE-hard for acyclic MDPs in \cite{HaaseKiefer15}:
	Given an MDP $\cM$ and a natural number $T>0$ such that $\goal$ is reached in~$\cM$ almost surely under all schedulers, decide whether
	there is a scheduler $\sched$ such that $\Pr^{\sched}_{\cM}(\rawdiaplus \goal{=T})=1$. 
	
	The idea is to construct an MDP $\cM^\prime$ that reaches $\goal$ with weight $T$ with probability $1/2$ directly and otherwise behaves like $\cM$.
	By choosing $\lambda$ sufficiently large, we can show that $\VPE[\lambda]^{\max}_{\cM^\prime} \geq T$ is only possible if and only if there is a scheduler with 
	$\Var^{\sched}_{\cM}(\rawdiaplus \goal)=0$. This scheduler then has to reach $\goal$ with weight $T$ on all paths. The necessary technical calculations can be found in Appendix \ref{app:hardness}.
\end{proof}

\subsection{Saturation Point} \label{sub:saturation}

\begin{figure}[t]
\centering
    \resizebox{.35\textwidth}{!}{%
      \begin{tikzpicture}[scale=1,auto,node distance=8mm,>=latex]
        \tikzstyle{round}=[thick,draw=black,circle]

        \node[round, draw=black,minimum size=10mm] (goal2) {$c$};
        \node[below=18mm of goal2] (input) {};
        \node[round, right=11mm of input, minimum size=10mm] (c) {$s$};
        \node[round, left=11mm of input, minimum size=10mm] (b) {$\sinit$};

        \node[round,above=11mm of goal2,minimum size=10mm] (goal) {$\goal$};

        \draw[color=black ,->,very thick] (goal2) edge [bend left]  node [pos=0.5,left] {$\alpha:+1$} (goal) ;
        \draw[color=black ,->,very thick] (goal2) edge [bend right]  node [pos=0.5,right] {$\beta:+0$} (goal) ;

        \draw[color=black ,->,very thick] (b)  edge  node [very near start, anchor=center] (m6) {} node [pos=0.2,left] {$1/2$} (goal2) ;
        \draw[color=black ,->,very thick] (b) edge  node [very near start, anchor=center] (m5) {} node [pos=0.22,below] {$1/2$} (c) ;
        \draw[color=black , very thick] (m5.center) edge [bend right=35] node [pos=0.5,right=1pt] {$+0$} (m6.center);

        \draw[color=black ,->,very thick] (c)  edge  node [very near start, anchor=center] (h5) {} node [pos=0.2,left] {$1/2$} (goal2) ;
        \draw[color=black ,->,very thick] (c) edge [loop, out=60, in=0, min distance=15mm]  node [pos=.07, anchor=center] (h6) {} node [pos=0.27,below=2pt] {$1/2$} (c) ;
        \draw[color=black , very thick] (h5.center) edge [bend left=35] node [pos=0.5,above=1pt] {$+1$} (h6.center);

      \end{tikzpicture}
    }

  \caption{The MDP $\cM$ used in Example \ref{exam:saturation}.}
  \label{fig:ex_saturation}
\end{figure}
In the sequel, we will provide a series of results that allow us to further restrict the class of deterministic schedulers that we have to consider when  maximizing the variance-penalized expectation. In the end, we obtain a finite set of deterministic finite-memory schedulers among which there is a scheduler achieving the optimal variance-penalized expectation. In particular, this means that the optimum is computable.

The key step is the insight that we can provide a natural number $K$ computable in polynomial time such that an optimal scheduler $\sched$ for the variance-penalized expectation has to minimize the expected accumulated weight before reaching $\goal$ once a weight of at least $K$ has already been accumulated on a run. Furthermore, the behavior of $\sched$ after a weight of at least $K$ has been accumulated must minimize the variance of the weight that will still be accumulated
among all expectation-minimal schedulers. 
We call this value $K$ a \emph{saturation point}.

\begin{example}\label{exam:saturation}

The MDP $\cM$ in Figure \ref{fig:ex_saturation} aims to provide some intuition on the results of this section.
The state $c$ in this MDP is reached with accumulated weight $n$ with probability $(1/2)^{n+1}$ for all $n\in \mathbb{N}$. Then, the choice has to be made whether to collect weight $+1$ or $0$ before moving to $\goal$.
We want to take a closer look at a family of special weight-based deterministic finite-memory schedulers for $\cM$: Let $\sched_k$ be the scheduler that chooses action $\alpha $ in $c$ if the accumulated weight is less than $k$ and otherwise chooses action $\beta$.

For these schedulers, we can explicitly provide expectation and variance:
The probability that the scheduler $\sched_k$ chooses $\alpha$ in $c$ is  $1-(1/2)^k$. As the expected accumulated weight before reaching $c$ is $1$,
we obtain a total expected accumulated weight of 
\[
\mathbb{E}^{\sched_k}_{\cM}(\rawdiaplus \goal)= 2- \frac{1}{2^k}.
\]
To obtain the variance, we compute $\mathbb{E}^{\sched_k}_{\cM}(\rawdiaplus \goal^2)$:
\begin{align*}
&\mathbb{E}^{\sched_k}_{\cM}(\rawdiaplus \goal^2)
=\sum_{n=0}^{k-1} \frac{1}{2^{k+1}} \cdot (n+1)^2 + \sum_{n=k}^{\infty} \frac{1}{2^{k+1}} \cdot n^2  \\
=&\sum_{n=0}^{\infty} \frac{1}{2^{k+1}} \cdot (n+1)^2 -  \sum_{n=k}^{\infty} \frac{1}{2^{k+1}} \cdot ( 2n+1 ) = 6-\frac{2k+3}{2^k}.
\end{align*}
We can then easily compute the variance
\[
\Var^{\sched_k}_{\cM}(\rawdiaplus \goal) = \mathbb{E}^{\sched_k}_{\cM}(\rawdiaplus \goal^2) -(\mathbb{E}^{\sched_k}_{\cM}(\rawdiaplus \goal))^2 
= 2+\frac{1-2k}{2^k}-\frac{1}{4^k}.
\]
 For $\lambda=1$, we obtain the following VPE:
\[
\VPE[\lambda]^{\sched_k}_{\cM} = \frac{k-1}{2^{k-1}} + \frac{1}{4^k}.
\]
Comparing scheduler $\sched_k$ to $\sched_{k+1}$, we obtain:
$
\VPE[\lambda]^{\sched_{k+1}}_{\cM}-\VPE[\lambda]^{\sched_{k}}_{\cM} = {2-k}/{2^k}-{3}/{4^k}$.
This difference is  negative for $k\geq 2$ and positive for  $k=1$. We conclude that among the schedulers $\sched_k$, the scheduler $\sched_2$ is VPE-optimal. Interestingly, this means  choosing not to accumulate the additional weight $+1$ by choosing $\alpha$ is better already for small amounts of accumulated weight.
Intuitively, the reason is that  choosing $\alpha$ for an accumulated weight $\geq 2$ has a larger effect on the variance than on the expectation.
Increasing the expectation in particular also increases the squared deviation of the path that reach $\goal$ with weight $1$ which has probability $1/2$ under $\sched_k$ for $k\geq 2$. The saturation point result of this section will tell us that an optimal scheduler always has to minimize the weight that is expected to still be accumulated weight once sufficiently much weight has already been accumulated.
\Ende
\end{example}

Let $\cM=(S,\Act,P,\sinit,\wgt,\goal)$ be an MDP without end components and with non-negative weights as above and let $\lambda>0$ be a rational.
Before we define $K$ and show that it can be computed in polynomial time, we need some additional notation.

For each state $s\in S$, define
$
	e_s\eqdef \mathbb{E}^{\min}_{\cM,s}(\rawdiaplus \goal)$.
For each state $s\in S\setminus \{\goal\}$, we define the subset $\Act^{\min}(s) \subseteq \Act(s)$ of actions allowing to minimize the expectation analogously to $\Act^{\max}$ before:
$
	\Act^{\min}(s)\eqdef \{\alpha\in \Act(s) \mid e_s = \wgt(s,\alpha) + \sum_{t\in S} P(s,\alpha, t)\cdot e_t \}$.
Choosing an action not belonging to $\Act^{\min}(s)$ in state $s$ ensures that the expected  accumulated weight before reaching $\goal$ is higher than the minimal possible value. Further, we can define the minimal amount by which choosing a non-minimizing action increases the expectation:
\[
	\delta\eqdef \min \{(\wgt(s,\alpha)+ \sum_{t\in S} P(s,\alpha, t)\cdot e_t ) - e_s
	\mid s\in S\setminus\{\goal\} \text{ and }\alpha\in \Act(s)\setminus\Act^{\min}(s)\}.
\]
If the set on the right hand side is empty, all schedulers minimize the expected accumulated weight before reaching $\goal$ and the claims of this section hold trivially. So, we can assume that this set is non-empty. By the definition of $\Act^{\min}$, we observe that $\delta>0$.

 Next, we can compute an upper bound $U_1$ for $\mathbb{E}^{\max}_{\cM,s}(\rawdiaplus \goal)$ for all states $s$ by computing the 
	      maximal value $U_1 \eqdef \max_{s\in S}\mathbb{E}^{\max}_{\cM,s}(\rawdiaplus \goal)$.

Finally, let $\varepsilon$ be the minimal transition probability present in $\cM$.
As $\cM$ has no end components, the only trap state $\goal$ is reached within $n\eqdef |S|$ steps
under each scheduler with probability at least $\varepsilon^n$.
Let $W$ be the largest weight  in $\cM$. Within $n$ steps at most a weight of $n\cdot W$ is accumulated.
We use these observations for the following two values:
\begin{itemize}
	\item
	      First, we can provide a value $B_{1/2}$ such that the probability that a weight above $B_{1/2}$ is accumulated under any scheduler is at most $1/2$:
	      For this, let $b_{1/2}$ be such that
	      $
		      ((1-\varepsilon^n))^{b_{1/2}} \leq 1/2$.
	      This is the case if and only if $b_{1/2}$ is at least
	      \[
		      \frac{\log(1/2)}{ \log(1-\varepsilon^n)} = -\frac{1}{ \log(1-\varepsilon^n)}< \frac{1}{\varepsilon^n}.
	      \]
	      So,
	      we can choose $b_{1/2}$
	      to be  $ \frac{1}{ \varepsilon^n}$.
	      Then, with probability at most $1/2$, a path has length at least $n\cdot b_{1/2}$. This allows us to defined $B_{1/2}\eqdef b_{1/2}\cdot n \cdot W$.
	\item
	      Second, we compute an upper bound $U_2$ for $\max_{s\in S}\mathbb{E}^{\max}_{\cM,s}(\rawdiaplus \goal^2)$: 
	      With probability $\varepsilon^n$ a path has weight at most $n\cdot W$; with probability $(1-\varepsilon^n) \cdot \varepsilon^n$ it has weight at most  $2\cdot n\cdot W$;
	      with probability $(1-\varepsilon^n)^2 \cdot \varepsilon^n$ it has weight at most  $3\cdot n\cdot W$; and so on. So, we get that
	      $
		      \max_{s\in S}\mathbb{E}^{\max}_{\cM,s}(\rawdiaplus \goal^2) \leq  \sum_{i=0}^{\infty} (1-\varepsilon^n)^{i}\cdot \varepsilon^n \cdot ((i+1)\cdot n\cdot W)^2$.
	      This allows us to define
	      \[
		      U_2 \eqdef \frac{2\cdot n^2\cdot W^2}{\varepsilon^{2n}} \geq \frac{(2-\varepsilon^n)\cdot n^2\cdot W^2}{\varepsilon^{2n}}=\sum_{i=0}^{\infty} (1-\varepsilon^n)^{i}\cdot \varepsilon^n \cdot ((i+1)\cdot n\cdot W)^2.
	      \]
\end{itemize}
We are now in the position to define the saturation point $K$:
Let $K$ be the least natural number with
\[
	K\geq B_{1/2} = \frac{n\cdot W}{\varepsilon^n} \qquad \text{and} \qquad K\geq \frac{U_1/\lambda +  U_2 +  2U_1 +U_1^2/2}{\delta}+1.
\]
The definition of $K$ is arguably a bit cumbersome, but the choices will become clear in the proof of Theorem \ref{thm:saturation}.
All values involved except for $\delta$ and $U_1$ can be computed directly from $n$, $W$, and $\varepsilon$ in polynomial time. The values $\delta$ and $U_1$ require to maximize or minimize the expected value of $\rawdiaplus \goal$ from all states, i.e., to solve an SSPP which can be done in polynomial time by linear programming \cite{BerTsi91,deAlf99}.

\begin{restatable}[Saturation point]{theorem}{theoremsaturation} \label{thm:saturation}
	Let $\cM$, $\lambda>0$ and $K$ be as above. 
	Let $\sched$ be a scheduler with  $\VP[\lambda]_{\cM}^{\sched}=\VP[\lambda]_{\cM}^{\max}$.
	Then,  for each finite $\sched$-path $\pi$ with $\wgt(\pi)\geq K$, the residual scheduler $\after{\sched}{\pi}$ satisfies
	\[
		\mathbb{E}^{\after{\sched}{\pi}}_{\cM,\last(\pi)} (\rawdiaplus \goal) = \mathbb{E}^{\min}_{\cM,\last(\pi)} (\rawdiaplus \goal).
	\]
\end{restatable}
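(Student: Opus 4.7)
The plan is a proof by contradiction: suppose $\sched$ achieves the maximum VPE, but some $\sched$-path $\pi$ with $\wgt(\pi)\geq K$ satisfies that $\after{\sched}{\pi}$ is not expectation-minimal from $s:=\last(\pi)$. I would then construct a scheduler $\sched'$ with $\VP[\lambda]^{\sched'}_{\cM} > \VP[\lambda]^{\sched}_{\cM}$, contradicting optimality. By Corollary~\ref{cor:weight-based} and Theorem~\ref{thm:determinist_scheduler} one may restrict attention to the case where the optimum is attained by a deterministic weight-based scheduler, so that the residual behavior after any finite path is determined by the terminal state-weight pair.

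Since $\after{\sched}{\pi}$ is not expectation-minimal, Bellman's principle locates a reachable state-weight pair $(s^*,w^*)$ at which $\sched$ prescribes an action $a^*\notin\Act^{\min}(s^*)$; by non-negativity of the weights, $w^*\geq \wgt(\pi)\geq K$. Unfolding the expected residual weight $\mu_{R^*}$ from $(s^*,w^*)$ under $\sched$ one step and using that $\mathbb{E}^{\tsched}_{t}(\rawdiaplus\goal)\geq e_t$ holds for every proper scheduler $\tsched$, the definition of $\delta$ yields the key inequality $\alpha:=\mu_{R^*}-e_{s^*}\geq\delta$. I would then define $\sched'$ to agree with $\sched$ on paths that never visit $(s^*,w^*)$, and to switch, on paths that do, to the memoryless, expectation-minimal, variance-minimal scheduler $\msched$ from Section~\ref{sec:minimal_variance} immediately after the first visit. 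Since $\cM$ has no $0$-end components, every state-weight pair is visited at most once per path, so the probability $q>0$ of visiting $(s^*,w^*)$ is well defined and identical under $\sched$ and $\sched'$.

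Using the identity $\VP[\lambda]^{\tsched}_{\cM} = \mu_{\tsched} - \lambda\,\mathbb{E}^{\tsched}_{\cM}(\rawdiaplus\goal^2) + \lambda\mu_{\tsched}^2$ with $\mu_{\tsched}=\mathbb{E}^{\tsched}_{\cM}(\rawdiaplus\goal)$, decomposing both expectations by conditioning on reaching $(s^*,w^*)$, and expanding $(w^*+R)^2$ with $R$ the residual weight, a short calculation yields
\[
  \Delta := \VP[\lambda]^{\sched'}_{\cM} - \VP[\lambda]^{\sched}_{\cM}
  = q\alpha\bigl[2\lambda w^* - 1 + \lambda(\mu_{R^*}+e_{s^*}-2\mu+q\alpha)\bigr] - \lambda q\,(V^{**}-V_{R^*}),
\]
where $V_{R^*}$ is the residual variance under $\sched$ from $(s^*,w^*)$, $V^{**}$ is the variance of $\msched$ started in $s^*$, and $\mu=\mathbb{E}^{\sched}_{\cM}(\rawdiaplus\goal)$.

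The final step is to lower-bound $\Delta$ using only the quantities entering the definition of $K$: $\mu,\mu_{R^*},e_{s^*}\in[0,U_1]$, $q\alpha\in[0,U_1]$, $V_{R^*},V^{**}\in[0,U_2]$, and $\alpha\geq\delta$. The dominant term $2\lambda w^* q\delta$ then grows linearly in $w^*$, while every correction is bounded by a constant depending only on $\lambda,U_1,U_2,\delta$; plugging in $w^*\geq K$ forces $\Delta>0$, contradicting the optimality of $\sched$. The main obstacle will be the bookkeeping in this lower bound: the sign of $V^{**}-V_{R^*}$ is a priori unclear, which is why $\msched$ must be chosen variance-minimal among expectation-minimal schedulers (Section~\ref{sec:minimal_variance}); and the clean bound $\alpha\geq\delta$ requires choosing $(s^*,w^*)$ at a concretely non-minimizing action, rather than at an arbitrary pair where the residual expectation merely exceeds $e_s$.
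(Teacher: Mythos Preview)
Your overall strategy---locate a state along a high-weight path at which a non-minimizing action is taken, switch to an expectation-minimal scheduler there, and show the VPE strictly increases---is exactly the paper's. Your displayed identity for $\Delta$ is correct, and the intuition that the term $2\lambda w^* q\alpha$ dominates is the right one. Two points, however, deserve correction.

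First, the restriction to deterministic weight-based schedulers is not a valid ``w.l.o.g.'' for this statement. The theorem asserts that \emph{every} VPE-optimal scheduler is expectation-minimal after weight $K$; Corollary~\ref{cor:weight-based} and Theorem~\ref{thm:determinist_scheduler} only guarantee that \emph{some} optimal scheduler is deterministic weight-based, not that all are. The paper handles arbitrary (possibly randomized, history-dependent) optimal $\sched$ by locating a finite $\sched$-path $\pi$ after which $\sched$ chooses some $\alpha\notin\Act^{\min}(\last(\pi))$ with positive probability, and modifying $\sched$ only on the event ``$\pi$ is followed and $\alpha$ is chosen.'' The probability $p$ of this event satisfies $p\leq 1/2$ because $\wgt(\pi)\geq K\geq B_{1/2}$, and this bound is used to absorb a term of the form $-2\lambda p^2\wgt(\pi)\alpha$ into $+2\lambda p\wgt(\pi)\alpha$, leaving $+\lambda p\wgt(\pi)\alpha$; this is precisely why $K\geq B_{1/2}$ appears in the definition of $K$. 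Your formulation hides this term because you implicitly expand $\mu_{\sched'}^2-\mu_{\sched}^2$ differently, but you then never check that the paper's specific $K$ suffices for your inequality; with your estimates one needs roughly $w^*>\tfrac{1}{2\lambda}+U_1+\tfrac{U_2}{2\delta}$, which is not in general dominated by the paper's bound without invoking $K\geq B_{1/2}$.

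Second, the appeal to the \emph{variance-minimal} expectation-minimal scheduler $\msched$ is misplaced here. Your residual scheduler from $(s^*,w^*)$ under $\sched$ is by assumption \emph{not} expectation-minimal, so variance-minimality of $\msched$ among expectation-minimal schedulers says nothing about the sign of $V^{**}-V_{R^*}$. The paper simply uses an arbitrary expectation-minimal scheduler and bounds the second-moment difference crudely by $U_2$; the variance-minimal refinement is only needed later, in the proof of Theorem~\ref{thm:optimal_scheduler_above_K}, where both residual schedulers \emph{are} expectation-minimal. (Incidentally, your claim that ``every state-weight pair is visited at most once per path'' is not implied by the absence of end components---zero-weight cycles that probabilistically leak to $\goal$ are still possible---though this does not affect the argument.)
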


\begin{proof}[Proof sketch]
Let $\sched$ be a scheduler with $\VP[\lambda]^{\sched}_{\cM} = \VP[\lambda]^{\max}_{\cM} $.
	Suppose there is a $\sched$-path $\pi^\prime$ with $\wgt(\pi^\prime)\geq K$ such that
	$
		\mathbb{E}^{\after{\sched}{\pi^\prime}}_{\cM,\last(\pi^\prime)} 
		(\rawdiaplus \goal) > \mathbb{E}^{\min}_{\cM,\last(\pi^\prime)} (\rawdiaplus \goal)	$.
	Then, there must be an $\sched$-path $\pi$ that extends $\pi^{\prime}$ such that $\sched$ chooses an action $\alpha\not\in \Act^{\min}(\last(\pi))$ with positive probability. 
	
	The residual scheduler $\tsched$ of $\sched$ after $\pi$ in case $\sched$ chooses $\alpha$ then satisfies $\mathbb{E}^{\tsched}_{\cM,\last(\pi)} (\rawdiaplus \goal )\geq \mathbb{E}^{\min}_{\cM,\last(\pi)} (\rawdiaplus \goal ) + \delta$. 
	We let $\sched^\prime$ be a scheduler that behaves like $\sched$ unless $\sched$ chooses $\alpha$ after $\pi$. In this case, $\sched^\prime$ minimizes the expected value of $\rawdiaplus \goal$ from then on.
	We  consider  the difference $\VPE[\lambda]^{\sched^{\prime}}_{\cM}- \VPE[\lambda]^{\sched}_{\cM}$. Using the bounds $U_1$ and $U_2$ and that the probability of $\pi$ is at most $1/2$ as $K\geq B_{1/2}$, we obtain a lower bound for this difference that consists of  an expression in terms of $U_1$, $U_2$, and $\lambda$ plus the term 
	\[
	\lambda\cdot \wgt(\pi)\cdot (\mathbb{E}^{\tsched}_{\cM,\last(\pi)} (\rawdiaplus \goal )- \mathbb{E}^{\min}_{\cM,\last(\pi)} (\rawdiaplus \goal )).
	\]
	Observing that this term is greater or equal to $\lambda \cdot K \cdot \delta$, the definition of $K$ was chosen exactly so that we can conclude that
	$\VPE[\lambda]^{\sched^{\prime}}_{\cM}- \VPE[\lambda]^{\sched}_{\cM}>0$. So, $\sched$ was not VPE-maximal yielding a contradiction.
\end{proof}

By the results of Section \ref{sec:minimal_variance}, there is a memoryless deterministic scheduler $\vsched$ that minimizes the variance among all schedulers minimizing the expected accumulated weight before reaching $\goal$.
More precisely,  for all states $s$, the scheduler $\vsched$ satisfies
	$
		\mathbb{E}^{\vsched}_{\cM,s}(\rawdiaplus\goal) = \mathbb{E}^{\min}_{\cM,s}(\rawdiaplus\goal)
	$
	and
	$
		\mathbb{V}^{\vsched}_{\cM,s} (\rawdiaplus \goal) = \inf_{\msched}\mathbb{V}^{\msched}_{\cM,s} (\rawdiaplus \goal)
	$
	where the infimum ranges over all schedulers $\msched$ with
	$\mathbb{E}^{\msched}_{\cM,s}(\rawdiaplus\goal) = \mathbb{E}^{\min}_{\cM,s}(\rawdiaplus\goal)$.
 We use the existence of this scheduler in the following theorem.

\begin{restatable}{theorem}{theoremoptimalaboveK}\label{thm:optimal_scheduler_above_K}
	Let $\cM$, $\lambda>0$, $K$, and $\vsched$ be as above. 
	Let $\sched$ be a deterministic scheduler with $\VP[\lambda]_{\cM}^{\sched}=\VP[\lambda]_{\cM}^{\max}$.
	Let $\tsched$  be the scheduler that agrees with $\sched$ on all paths $\pi$ with weight less than $K$ and that chooses actions according to the memoryless deterministic scheduler $\vsched$ after paths $\pi^\prime$ with $\wgt(\pi^\prime)\geq K$. This scheduler $\tsched$ satisfies
	$
		\VP[\lambda]_{\cM}^{\tsched}=\VP[\lambda]_{\cM}^{\max}$, too.
\end{restatable}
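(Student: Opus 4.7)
The plan is to show that $\mathbb{E}^{\tsched}_{\cM}(\rawdiaplus \goal) = \mathbb{E}^{\sched}_{\cM}(\rawdiaplus \goal)$ and $\Var^{\tsched}_{\cM}(\rawdiaplus \goal) \leq \Var^{\sched}_{\cM}(\rawdiaplus \goal)$, which together yield $\VPE[\lambda]^{\tsched}_{\cM} \geq \VPE[\lambda]^{\sched}_{\cM} = \VPE[\lambda]^{\max}_{\cM}$ and hence equality. Since $\sched$ and $\tsched$ are deterministic and agree on every finite path of accumulated weight strictly less than $K$, I would partition the sample space by a random variable $Z$ defined on maximal paths that equals $0$ on the event that $\goal$ is reached before the accumulated weight ever hits $K$, and otherwise equals the unique minimal prefix $\pi$ with $\wgt(\pi) \geq K$. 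Writing $\Pi_K$ for the countable set of nonzero values of $Z$, the random variable $Z$ has identical distribution under $\Pr^{\sched}_{\cM}$ and $\Pr^{\tsched}_{\cM}$, and the conditional distribution of $\rawdiaplus \goal$ given $\{Z=0\}$ is the same under the two schedulers.

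For each $\pi \in \Pi_K$, set $s \eqdef \last(\pi)$. On the event $\{Z = \pi\}$ the scheduler $\sched$ continues according to $\after{\sched}{\pi}$, whereas $\tsched$ continues according to the memoryless $\vsched$ from state $s$. Since $\sched$ is VPE-optimal, Theorem~\ref{thm:saturation} applies and gives $\mathbb{E}^{\after{\sched}{\pi}}_{\cM,s}(\rawdiaplus \goal) = e_s$, so
\[
    \mathbb{E}^{\sched}_{\cM}(\rawdiaplus\goal \mid Z = \pi) \;=\; \wgt(\pi) + e_s \;=\; \mathbb{E}^{\tsched}_{\cM}(\rawdiaplus\goal \mid Z = \pi),
\]
where the second equality uses $\mathbb{E}^{\vsched}_{\cM,s}(\rawdiaplus\goal) = e_s$ by definition of $\vsched$. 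Taking total expectations yields $\mathbb{E}^{\tsched}_{\cM}(\rawdiaplus \goal) = \mathbb{E}^{\sched}_{\cM}(\rawdiaplus \goal)$.

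For the variance, I would apply the law of total variance $\Var(X) = \mathbb{E}[\Var(X \mid Z)] + \Var(\mathbb{E}[X \mid Z])$ with $X = \rawdiaplus \goal$. The second summand depends only on the distribution of $Z$ and on the conditional expectations of $X$ given $Z$, all of which coincide under $\sched$ and $\tsched$ by the previous paragraph; hence it is the same. For the first summand, the contribution of $\{Z = 0\}$ is identical, while for each $\pi \in \Pi_K$,
\[
    \Var^{\tsched}_{\cM}(\rawdiaplus\goal \mid Z = \pi) \;=\; \mathbb{V}^{\vsched}_{\cM,s}(\rawdiaplus\goal) \;\leq\; \mathbb{V}^{\after{\sched}{\pi}}_{\cM,s}(\rawdiaplus\goal) \;=\; \Var^{\sched}_{\cM}(\rawdiaplus\goal \mid Z = \pi),
\]
where the inequality uses that $\vsched$ minimizes the variance among expectation-minimizers from $s$ and that $\after{\sched}{\pi}$ is itself such an expectation-minimizer by Theorem~\ref{thm:saturation}. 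This gives the desired $\Var^{\tsched}_{\cM}(\rawdiaplus\goal) \leq \Var^{\sched}_{\cM}(\rawdiaplus\goal)$ and closes the argument. I do not anticipate a substantial obstacle beyond setting up $Z$ and the law of total variance rigorously, which is routine once one uses that $\goal$ is reached almost surely and that $\Var^{\sched}_{\cM}(\rawdiaplus\goal)$ is finite under the standing assumption that $\cM$ has no end components; the only nontrivial inputs are the saturation point result and the variance-minimality of $\vsched$ established in Section~\ref{sec:minimal_variance}.
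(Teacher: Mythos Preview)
Your proof is correct and in fact cleaner than the paper's. You use the law of total variance with the first-passage partition~$Z$ to handle all paths crossing the threshold~$K$ simultaneously, which immediately gives $\mathbb{E}^{\tsched}_{\cM}(\rawdiaplus\goal)=\mathbb{E}^{\sched}_{\cM}(\rawdiaplus\goal)$ and $\Var^{\tsched}_{\cM}(\rawdiaplus\goal)\leq\Var^{\sched}_{\cM}(\rawdiaplus\goal)$ in one stroke. The paper instead proceeds path by path: for a single~$\pi$ with $\wgt(\pi)\geq K$ it replaces the residual $\after{\sched}{\pi}$ by~$\vsched$, reduces the comparison of VPEs to $\mathbb{E}^{\vsched}_{\cM,s}(\rawdiaplus\goal^2)\leq\mathbb{E}^{\after{\sched}{\pi}}_{\cM,s}(\rawdiaplus\goal^2)$ via a direct expansion, and then enumerates all such paths and invokes the continuity lemma (Lemma~\ref{lem:continuous}) to pass to the limit~$\tsched$. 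Both arguments rest on exactly the same two ingredients---Theorem~\ref{thm:saturation} for equality of the conditional expectations and the variance-minimality of~$\vsched$ from Section~\ref{sec:minimal_variance}---but your route avoids the iterative construction and the appeal to continuity altogether, at the modest cost of invoking the law of total variance (which is unproblematic here since the second moment of $\rawdiaplus\goal$ is finite in an MDP without end components).
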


\begin{proof}[Proof sketch]
Given a scheduler $\sched$ with $\VP[\lambda]_{\cM}^{\sched}=\VP[\lambda]_{\cM}^{\max}$, and a path $\pi$ with $\wgt(\pi)\geq K$, we compare the scheduler $\sched$ to the scheduler $\sched^\prime$ that behaves like $\sched$, but switches to the behavior of $\vsched$ after $\pi$. We obtain that 
 $\VP[\lambda]_{\cM}^{\sched^\prime}\geq \VP[\lambda]_{\cM}^{\sched}$ is equivalent to $\mathbb{E}^{\vsched}_{\cM}(\rawdiaplus \goal^2)\leq \mathbb{E}^{\after{\sched}{\pi}}_{\cM}(\rawdiaplus \goal^2)$. This holds because
 $\mathbb{V}^{\vsched}_{\cM}(\rawdiaplus \goal)\leq \mathbb{V}^{\after{\sched}{\pi}}_{\cM}(\rawdiaplus \goal)$ as $\vsched$ and $\after{\sched}{\pi}$ achieve the same expectation. Using a continuity argument as before, we show that  changing the behavior of $\sched$ to $\vsched$ after all paths with weight at least $K$ does not decrease the VPE.
\end{proof}

Put together, we have shown that the maximal VPE is obtained by a weight-based deterministic scheduler that switches to the memoryless behavior of $\vsched$ as soon as a weight of at least $K$ has been accumulated, which also means that it uses only finite memory.

\subsection{Computation of the optimal VPE} \label{sub:computation}

%
%
%
Given an MDP $\cM=(S,\Act,P,\sinit,\wgt, \goal)$ with non-negative weights and without end components and $\lambda>0$ as before, 
let $K$ be the saturation point given above. Note that $K$ is computable in polynomial time and that hence its numerical value is at most exponential in the size of $\cM$.
We construct the following MDP $\cM^\prime$ that encodes the weights that are accumulated until the saturation point is exceeded into the state space:
Let $W$ be the maximal weight occurring in $\cM$. The set of states is $S^\prime = S\times \{0,1,\dots, K+W-1\}$. The set of actions remains unchanged.
The new probability transition function is given by 
$
P^\prime ((s,w),(t,w+\wgt(s,\alpha))) \eqdef P(s,\alpha,t)
$
for all $s,t\in S$, all $w<K$, and all $\alpha\in \Act(s)$. All remaining transition probabilities are $0$. Note that this means that all states of the form $(\goal,w)$ with $w\in \{0,1,\dots, K+W-1\}$ and of the form $(s,w)$ with $s\in S$ and $w\in \{K,K+1,\dots, K+W-1\}$ are trap states in $\cM^\prime$.
The initial state is $\sinit^\prime\eqdef (\sinit,0)$. The weight function is not relevant in $\cM^\prime$.

 Let $\vsched$ be the memoryless deterministic scheduler for $\cM$ as in Theorem \ref{thm:optimal_scheduler_above_K} that specifies the optimal behavior in order to maximize the variance-penalized expectation as soon as a weight of at least $K$ has been accumulated. Let us call the set of weight-based deterministic schedulers for $\cM$ that behave like $\vsched$ after a weight of at least $K$ has been accumulated by $\mathrm{WD}_K (\cM)$.
Clearly, there is a natural one-to-one-correspondence between memoryless deterministic schedulers for $\cM^\prime$ and schedulers in $\mathrm{WD}_K (\cM)$.

By the results of Section \ref{sec:minimal_variance}, for each state $s\in S$, we can compute the values
\[
e_s \eqdef \mathbb{E}^{\vsched}_{\cM,s}(\rawdiaplus \goal) \qquad \text{ and }\qquad q_s \eqdef \mathbb{E}_{\cM,s}^{\vsched}(\rawdiaplus \goal^2) = \Var^{\vsched}_{\cM,s}(\rawdiaplus \goal) + e_s^2
\]
in polynomial time. The following lemma now allows us to express the VPE in $\cM$ in terms of reachability probabilities in $\cM^\prime$.

\begin{restatable}{lemma}{lemmaVPEviaReachability}\label{lem:VPE_via_reachability_probabilities}
Let $\cM$, $\cM^\prime$, $K$ and $\lambda$ be as above. Given a scheduler $\sched\in \mathrm{WD}_{K}(\cM)$  also viewed as a memoryless deterministic scheduler for $\cM^\prime$, let
\[
\mu \eqdef \sum_{w<K} \Pr^{\sched}_{\cM^\prime} (\Diamond (\goal,w)) \cdot w + \sum_{s\in S, w\geq K} \Pr^{\sched}_{\cM^\prime} (\Diamond (s,w)) \cdot (w+e_s).  \tag{$\dagger$}
\]
Then,
\begin{align*}
\VPE[\lambda]^{\sched}_{\cM} =  \mu - \lambda \cdot \Big( & \sum_{w<K} \Pr^{\sched}_{\cM^\prime} (\Diamond (\goal,w)) \cdot (w-\mu)^2) \\
&+ \sum_{s\in S, w\geq K} \Pr^{\sched}_{\cM^\prime} (\Diamond (s,w)) \cdot  ((w-\mu)^2  + 2(w-\mu)e_s + q_s ) )
   \Big). \tag{$\ddagger$}
\end{align*}
\end{restatable}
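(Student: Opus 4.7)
The plan is to condition on which trap state of $\cM^\prime$ the scheduler $\sched$ eventually reaches, and to analyze the two types of traps separately. Every maximal $\sched$-run in $\cM^\prime$ ends either in some $(\goal, w)$ with $w < K$, corresponding to the target being reached in $\cM$ with total accumulated weight exactly $w$ before saturation, or in some $(s, w)$ with $w \geq K$, corresponding to the accumulated weight first reaching or exceeding the saturation point at state $s$ with total weight $w$. Since $\sched \in \mathrm{WD}_K(\cM)$, in the latter case $\sched$ continues from $s$ according to the memoryless scheduler $\vsched$, independently of the prefix. The natural correspondence between $\sched$ as a weight-based scheduler of $\cM$ and as a memoryless scheduler of $\cM^\prime$ ensures that $\Pr^{\sched}_{\cM^\prime}(\Diamond (s,w))$ equals the probability under $\sched$ in $\cM$ of first crossing weight $w$ at state $s$ (respectively, of reaching $\goal$ with weight $w$ for traps of the form $(\goal, w)$).

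First I would verify that $\mu$ defined in $(\dagger)$ equals $\mathbb{E}^{\sched}_{\cM}(\rawdiaplus \goal)$ by applying the law of total expectation to this partition. For traps $(\goal, w)$ with $w < K$, the accumulated weight is exactly $w$, contributing $\Pr^{\sched}_{\cM^\prime}(\Diamond (\goal,w)) \cdot w$. For traps $(s, w)$ with $w \geq K$, the remaining expected accumulated weight under $\vsched$ from $s$ is $e_s$, so by linearity of expectation the contribution is $\Pr^{\sched}_{\cM^\prime}(\Diamond (s,w)) \cdot (w + e_s)$. Summing yields exactly $(\dagger)$.

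Next I would expand $\Var^{\sched}_{\cM}(\rawdiaplus \goal) = \mathbb{E}^{\sched}_{\cM}\bigl((\rawdiaplus \goal - \mu)^2\bigr)$ via the same conditioning. Traps $(\goal, w)$ with $w < K$ contribute $(w - \mu)^2$, weighted by their reachability probability. For a trap $(s, w)$ with $w \geq K$, let $X$ denote the weight accumulated after reaching $s$ under $\vsched$; on this event $\rawdiaplus \goal = w + X$, and expanding
\[
\mathbb{E}^{\vsched}_{\cM,s}\bigl(((w - \mu) + X)^2\bigr) = (w-\mu)^2 + 2(w-\mu)\,e_s + q_s
\]
using the definitions $e_s = \mathbb{E}^{\vsched}_{\cM,s}(\rawdiaplus \goal)$ and $q_s = \mathbb{E}^{\vsched}_{\cM,s}(\rawdiaplus \goal^2)$ gives the second summand in the variance expression. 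Substituting into $\VPE[\lambda]^{\sched}_{\cM} = \mu - \lambda \cdot \Var^{\sched}_{\cM}(\rawdiaplus \goal)$ yields $(\ddagger)$.

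The only subtle point is the conditional-independence statement implicit in both computations: given that the run first reaches $(s, w)$ with $w \geq K$, the distribution of the remainder in $\cM$ coincides with that of $\vsched$ started at $s$. This follows from the weight-based structure of $\sched$ and its membership in $\mathrm{WD}_K(\cM)$, together with a routine induction on path length using the definition of $P^\prime$. Beyond verifying this correspondence, the argument is pure bookkeeping via linearity of expectation, so I do not anticipate any substantive obstacle.
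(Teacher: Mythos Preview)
Your proposal is correct and follows essentially the same route as the paper's proof: both argue that $\mu = \mathbb{E}^{\sched}_{\cM}(\rawdiaplus\goal)$ by conditioning on the trap state reached in $\cM'$, and then compute the variance $\mathbb{E}^{\sched}_{\cM}((\rawdiaplus\goal-\mu)^2)$ by the same conditioning, expanding $\mathbb{E}^{\vsched}_{\cM,s}((w-\mu+\rawdiaplus\goal)^2)=(w-\mu)^2+2(w-\mu)e_s+q_s$ for the traps with $w\geq K$. The paper is slightly terser about the correspondence between the events $\Diamond(s,w)$ in $\cM$ and $\cM'$ and about why the residual behaviour is governed by $\vsched$, whereas you spell out the conditional-independence justification more explicitly; otherwise the arguments coincide.
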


\begin{proof}
It is clear that $\mu=\mathbb{E}^{\sched}_{\cM}(\rawdiaplus \goal)$. So,
we have to show that
\begin{align*}
&\Var^{\sched}_{\cM}(\rawdiaplus\goal ) = \mathbb{E}^{\sched}_{\cM} ((\rawdiaplus \goal - \mu)^2)\\
=& 
 \sum_{w<K} \Pr^{\sched}_{\cM^\prime} (\Diamond (\goal,w)) \cdot (w-\mu)^2) \\
&+ \sum_{s\in S, w\geq K} \Pr^{\sched}_{\cM^\prime} (\Diamond (s,w)) \cdot  ((w-\mu)^2  + 2(w-\mu)e_s + q_s ) ). \tag{$\circ$}
\end{align*}

The event $\Diamond (s,w)$ that $(s,w)$ is reached in $\cM^\prime$ corresponds to the event that a path in $\cM$ has a prefix of weight $w$ ending in $s$. We denote this event in $\cM$ also by $\Diamond (s,w)$.
If $\Pr^{\sched}_{\cM^\prime} (\Diamond (s,w))>0$ for $w\geq K$, then 
\begin{align*}
&\mathbb{E}^\sched_{\cM}((\rawdiaplus \goal -\mu)^2 | \Diamond (s,w) ) \\
=& \mathbb{E}^\vsched_{\cM,s}((\rawdiaplus \goal + w -\mu)^2 ) \\
=& (w-\mu)^2 + 2 (w-\mu)\cdot \mathbb{E}^{\vsched}_{\cM,s} (\rawdiaplus \goal) + \mathbb{E}^\vsched_{\cM,s}(\rawdiaplus \goal^2). 
\end{align*}
So, the sums in equation ($\circ$) sum up the conditional expectation of $(\rawdiaplus \goal- \mu)^2$ in $\cM$ under the conditions that  $(\goal,w)$ is reached for $w<K$ or that the state $s$ is the first one reached when the accumulated weight exceeds $K$ with weight $w\geq K$, multiplied by the respective probabilities of the conditions.
\end{proof}

Putting everything together, we arrive at the main result.

\begin{theorem}
Let $\cM$ and $\lambda$ be as above. 
Given a rational $\vartheta$, the threshold problem whether $\VPE[\lambda]^{\max}_{\cM}\geq \vartheta$ is in NEXPTIME.
The optimal value $\VPE[\lambda]^{\max}_{\cM}$ and an optimal scheduler can be computed in exponential space.
\end{theorem}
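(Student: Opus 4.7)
The plan is to combine the structural results from the previous subsections with the reformulation given by Lemma~\ref{lem:VPE_via_reachability_probabilities} on the unfolded MDP $\cM'$. First I would note that by Theorem~\ref{thm:determinist_scheduler}, Theorem~\ref{thm:saturation}, and Theorem~\ref{thm:optimal_scheduler_above_K}, the maximal VPE is attained by some scheduler in $\mathrm{WD}_K(\cM)$, and such schedulers correspond bijectively to memoryless deterministic schedulers on $\cM'$. The saturation point $K$ is computable in polynomial time and its numerical value is bounded by $2^{\poly(|\cM|)}$, so $\cM'$ has size exponential in $|\cM|$; its transition probabilities are those of $\cM$, so they remain representable in polynomial bit-length. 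The constants $e_s$ and $q_s$ for $s\in S$ are obtained in polynomial time in $|\cM|$ by applying the algorithm of Section~\ref{sec:minimal_variance} to compute $\vsched$ and evaluating the resulting Markov chain.

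For a fixed memoryless deterministic scheduler $\sched$ on $\cM'$, the induced Markov chain has size polynomial in $|\cM'|$, i.e., exponential in $|\cM|$. The reachability probabilities $\Pr^{\sched}_{\cM'}(\Diamond(s,w))$ to each trap state of $\cM'$ can then be computed by solving a linear system of exponential size, which takes exponential time. Plugging these probabilities, together with the precomputed $e_s$ and $q_s$, into the formula ($\dagger$)--($\ddagger$) of Lemma~\ref{lem:VPE_via_reachability_probabilities} yields $\VPE[\lambda]^{\sched}_{\cM}$ in exponential time.

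For the NEXPTIME upper bound on the threshold problem, I would guess a memoryless deterministic scheduler on $\cM'$ (a certificate of exponential size: one action per state of $\cM'$), evaluate its VPE as above in exponential time, and accept iff the value is at least $\vartheta$. Correctness follows from the fact that the optimal VPE is witnessed by such a scheduler.

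For the EXPSPACE bound on computing $\VPE[\lambda]^{\max}_{\cM}$ and an optimal scheduler, I would enumerate all memoryless deterministic schedulers on $\cM'$ in lexicographic order: each is described in exponential space, and we can cycle through them one at a time reusing the same workspace. For each enumerated scheduler, compute the VPE as above (all intermediate quantities fit in exponential space), and keep track of the running maximum together with the best scheduler seen so far. Upon exhausting the enumeration, output that pair. The main obstacle is that the VPE is a quadratic rather than linear functional of the reachability probabilities, so a direct LP-style optimization on $\cM'$ is not available; this is precisely what forces us to enumerate schedulers and yields the EXPSPACE (rather than EXPTIME) bound. The other technical point is to verify carefully that all arithmetic on the quantities appearing in ($\ddagger$) can be carried out with numerators and denominators whose bit-length remains polynomial in $|\cM'|$, so that comparisons with the rational threshold $\vartheta$ are exact.
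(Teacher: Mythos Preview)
Your proposal is correct and follows essentially the same approach as the paper: guess (respectively, enumerate) a memoryless deterministic scheduler on the exponentially-large unfolding $\cM'$, evaluate its VPE via the reachability formula of Lemma~\ref{lem:VPE_via_reachability_probabilities}, and compare/track the maximum. Your additional remarks on bit-length bounds and on why the quadratic dependence obstructs an LP approach are welcome elaborations, but the core argument matches the paper's proof.
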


\begin{proof}
The threshold problem can be decided in non-deterministic exponential time as follows:
Given $\cM$ and $\lambda$, compute $K$ in polynomial time and construct $\cM^\prime$ as above (of exponential size) in exponential time. Guess a memoryless deterministic scheduler $\sched$ for $\cM^\prime$ also viewed as a scheduler in $\mathrm{WD}_K(\cM)$. The reachability probabilities for all trap states in $\cM^\prime$ under $\sched$ can then be computed in time polynomial in the size of $\cM^\prime$. With the help of equations ($\dagger$) and ($\ddagger$) from Lemma \ref{lem:VPE_via_reachability_probabilities}, $\VPE[\lambda]^\sched_{\cM}$ can be computed from these reachability probabilities in time polynomial in the size of $\cM^\prime$. If $\VPE[\lambda]^\sched_{\cM}\geq \vartheta$, accept.
By Theorem \ref{thm:optimal_scheduler_above_K}, $\VPE[\lambda]^{\max}_{\cM}\geq \vartheta$ iff there is a scheduler  $\sched$ in $\mathrm{WD}_K(\cM)$ with $\VPE[\lambda]^\sched_{\cM}\geq \vartheta$. Due to the one-to-one correspondence between schedulers in $\mathrm{WD}_K(\cM)$
and memoryless deterministic schedulers for $\cM^\prime$, this establishes the correctness of the algorithm.

To compute the optimal value $\VPE[\lambda]^{\max}_{\cM}$, we compute $\VPE[\lambda]^\sched_{\cM}$ for all schedulers $\sched$ in $\mathrm{WD}_K(\cM)$ in the same fashion and always store the highest value found so far. As the memoryless schedulers for $\cM^\prime$ have an exponentially large representation, this can be done in exponential space and the optimal scheduler can be returned as well.
\end{proof}

\section{Conclusion}
In our results, there remains a complexity gap between the EXPTIME-lower bounds and the  exponential-space and NEXPTIME-upper bounds for the optimization of the VPE in MDPs with non-negative weights and the corresponding threshold problem, respectively.
Here, we want to shed some light on this complexity gap:
It is well-known that the possible vectors of expected frequencies of all states in an MDP can be characterized by a linear equation system (see, e.g., \cite{Kallenberg}). 
Using this linear equation system for the exponentially large MDP constructed in Section \ref{sub:computation}
and equations ($\dagger$) and ($\ddagger$) from that section, the threshold problem for the maximal VPE can be reformulated as the satisfiability  
problem of an exponentially sized system of quadratic inequalities. The optimization problem can likewise be formulated as an exponentially large
\emph{quadratically constrained quadratic program} (QCQP).
This satisfiability problem and QCQPs are NP-hard in general. 
The question whether the inequality system of exponential size we obtain here has a special structure which allows it to be solved in exponential time remains open here.

This observation stands in contrast to 
conceptually similar saturation point results straight-forwardly leading to
 exponential time algorithms   (see, e.g.,  \cite{icalp2020}).
For example, the threshold problem for conditional expectations: ``Given a set $T\subseteq S$, is there a scheduler $\sched$ with
$\mathbb{E}^{\sched}_{\cM}(\rawdiaplus \goal \mid \Diamond T)\geq \vartheta$?'' admits a saturation point result in MDPs with non-negative weights as well 
 \cite{tacas2017}.
Deriving a 
 system of inequalities as above, however, leads to a system of linear inequalities after straight-forward transformations.
Hence, this approach directly leads to an exponential time algorithm for the threshold problem 
for conditional expectations.
For the VPE, the system of inequalities seems to be inherently of a polynomial nature which can be seen as an indication that the situation here  is fundamentally more difficult.

Further, we restricted our attention to MDPs with non-negative weights.
When allowing positive and negative weights, the key result, the existence of a saturation point, does not hold anymore.
For conditional expectations and other problems relying on the existence of a saturation point, the switch to  integer weights makes the problems even at least as hard as the Positivity problem for linear recurrence sequences, a number theoretic problem whose decidability has been open for many decades (see \cite{icalp2020,piribauer2021thesis}).
The question whether such a hardness result for the threshold problem of the VPE, rendering decidability impossible without a  breakthrough in  number theory, can be established remains as future work.

Further possible directions of research include the investigation of the following 
multi-objective threshold problem: Given $\eta$ and $\nu$, is there a scheduler with expectation at least $\eta$ and variance at most $\nu$?
As the variance treats good and bad outcomes symmetrically, replacing the variance in the VPE by a one-sided deviation measure, 
such as the lower semi-variance that only takes the outcomes worse than the expected value into account, constitutes
another natural  extension of this work.


\clearpage

\bibliographystyle{plain}
\bibliography{references/lit}


\clearpage
\begin{appendix}

\section{Omitted proofs of Section \ref{sec:minimal_variance}}\label{app:minimal_variance}

\lemmapreprocessing*

\begin{proof}[Proof sketch]
    The so-called spider construction of~\cite[Lemma 3.7]{BaiBerDubGbuSan-LICS18}
    was shown to produce an MDP~$\cM'$ without $0$-ECs, and which provides the mappings $f$ and $g$,
    preserving, in particular, the probabilities of all properties $\phi_k$ defined by the set of paths that reach~$\goal$
    with cost at least~$k$.
    The construction actually preserves all~\emph{$\mathcal{E}$-invariant} properties
    where~$\mathcal{E}$ is the eliminated $0$-EC; and it was shown that~$\phi_k$ are such properties
    \footnote{Note that in item S3.2 of \cite[Lemma 3.7]{BaiBerDubGbuSan-LICS18},
        $p_s^\sched = 0$ if~$\sched$ is proper.}.
    The equalities of all moments, and in particular, the expectation and variance immediately follow.
\end{proof}

\lemmapruned*

\begin{proof}
    Towards a contradiction, assume that~$\cM'$ has an end-component~$\mathcal{E}$.
    Let~$s$ be a state in~$\mathcal{E}$, $\sched$ a memoryless deterministic scheduler that stays forever
    in~$\mathcal{E}$.
    Consider~$k>0$, let $B_k$ denote the set of $\sched$-paths of~$\mathcal{E}$ of length~$k$ that start
    at~$s$. Using that all actions in $\cE$ belong to $\Act^{\max}$, we can write
    \begin{align*}
        \mu_s & = \sum_{\rho \in B_k} \Pr^{\sched}_{\cE}(\rho)(\wgt(\rho) + \mu_{\last(\rho)})                         \\
              & = \sum_{\rho \in B_k} \Pr^{\sched}_{\cE}(\rho)\wgt(\rho) + \sum_{\rho \in B_k}\Pr^{\sched}_{\cE}(\rho)\mu_{\last(\rho)} \\
              & \leq \sum_{\rho \in B_k} \Pr^{\sched}_{\cE}(\rho)\wgt(\rho) + \max_{t \in S} \mu_{t}.                  \\
    \end{align*}
    More rigorously, these equations follow easily by induction on $k$ using the definition of $\Act^{\max}$.
    Notice that $\sum_{\rho \in B_k} P(\rho)\wgt(\rho)$ is the expected accumulated weight
    over~$k$ steps. However, we know that $\mathbb{E}^{\max}_{\mathcal{E}}(\MeanPayoff)<0$,
    so the expected mean payoff inside this end component is also negative.
    This implies that $\lim_{k \rightarrow \infty}\sum_{\rho \in B_k} \Pr^{\sched}_{\cE}(\rho)\wgt(\rho) = -\infty$
    which contradicts the above inequality. Thus, $\cM'$ has no end-components.
      It also follows that all schedulers are proper.

    We now show that all schedulers are expectation-optimal by 
    first observing that all memoryless deterministic schedulers are expectation-optimal.
    In fact, if~$\sched_{\MD}$ denotes such a scheduler in~$\cM'$,
    then $(\mathbb{E}^{\sched_{\MD}}_{\cM',s}(\rawdiaplus\goal))_{s \in S}$ is a solution of~\eqref{eqn:mu} and hence agrees with $(\mathbb{E}^{\max}_{\cM',s}(\rawdiaplus\goal))_{s \in S}$.
    As maximal and minimal expectation of $\rawdiaplus\goal$ are obtained by memoryless deterministic schedulers \cite{BerTsi91}, all schedulers have the same expected values from each state.
\end{proof}

\lemmavarianceequation*

\begin{proof}
    We consider the random variables $X_0,X_1,\ldots$ which are the $i$-th visited state;
    $R$ which is the sum of all weights until~$\goal$ is reached (i.e. this stands for~$\rawdiaplus\goal$),
    and~$R'$ the sum of all weights but the first one until~$\goal$ is reached.
    \begin{align*}
        \mathbb{V}_{\cM,s}^{\inf} & = \inf_{\sched} \mathbb{E}_{\cM,s}^{\sched}[(R-\mu_s)^2]                                                                                          \\
                                  & =\min_{a \in \Act(s)} \inf_{\sched : \sched(s) = a}\mathbb{E}_{\cM,s}^{\sched}[(R-\mu_s)^2]                                                       \\
                                  & =\min_{a \in \Act(s)} \inf_{\sched : \sched(s) = a}\mathbb{E}_{\cM,s}^{\sched}[(\wgt(s,a) + R' -\mu_s)^2]                                         \\
                                  & =\min_{a \in \Act(s)} \inf_{\sched : \sched(s) = a}\sum_{t \in S}P(s,a,t) \mathbb{E}_{\cM,s}^{\sched}[(\wgt(s,a) + R' -\mu_s)^2 \mid X_1=t]       \\
                                  & =\min_{a \in \Act(s)} \inf_{\sched : \sched(s) = a}\sum_{t \in S}P(s,a,t) \mathbb{E}_{\cM,t}^{\sched}[(\wgt(s,a) + R -\mu_s)^2]                   \\
                                  & =\min_{a \in \Act(s)} \sum_{t \in S}P(s,a,t) \left(\left(\wgt(s,a) -\mu_s + \mu_t\right)^2 + \inf_{\sched} \mathbb{V}_{\cM,t}^{\sched}[R]\right). \\
    \end{align*}

    Here, we used $\mathbb{E}[(c+X)^2]) = (c+\mathbb{E}[X])^2 + \mathbb{V}[X]$
    and the fact that $\mu_t = \mathbb{E}_{\cM,t}^{\sched}[R]$ is constant on the last line.
    This shows that $\mathbb{V}_{\cM,s}^{\inf}[R]$ satisfies the given equation.

    Notice that this has a form similar to \eqref{eqn:mu}, where the weight function
    not only depends on the state~$s$ and chosen action~$a$ but also on the next state~$t$,
    that is, $\wgt'(s,a,t) = (\wgt(s,a) - \mu_s +\mu_t)^2$. Alternatively,
    the dependence of the weight on the state~$t$ can also be modeled using intermediary states.
    Since $\cM$ has no end-components, this has a unique solution by~\cite{BerTsi91}.
\end{proof}

\section{Omitted proofs of Section \ref{sec:VPE}}\label{app:VPE}

\subsection{Existence of optimal deterministic schedulers}

\lemparabolic*

\begin{proof}
	As the expected values of $\rawdiaplus \goal$ and $\rawdiaplus \goal^2$ depend linearly on the expected frequencies of the state-weight pairs $(\goal,w)$ with $w\in\mathbb{N}$, we know that
	\[
		\mathbb{E}^{\rsched}_{\cM}(\rawdiaplus\goal) = p\cdot \mathbb{E}^{\sched}_{\cM}(\rawdiaplus\goal)+(1-p)\cdot \mathbb{E}^{\tsched}_{\cM}(\rawdiaplus\goal)
	\]
	and
	\[
		\mathbb{E}^{\rsched}_{\cM}(\rawdiaplus\goal^2) = p\cdot \mathbb{E}^{\sched}_{\cM}(\rawdiaplus\goal^2)+(1-p)\cdot \mathbb{E}^{\tsched}_{\cM}(\rawdiaplus\goal^2).
	\]
	The claim follows by basic arithmetic using
	$\Var^{\rsched}_{\cM}(\rawdiaplus\goal)=\mathbb{E}^{\rsched}_{\cM}(\rawdiaplus\goal^2)-\mathbb{E}^{\rsched}_{\cM}(\rawdiaplus\goal) ^2$:
	\begin{align*}
		       & \Var^{\rsched}_{\cM}(\rawdiaplus\goal)                                                                                                                                                                                             \\
		{}= {} & p\cdot \mathbb{E}^{\sched}_{\cM}(\rawdiaplus\goal^2)+(1-p)\cdot \mathbb{E}^{\tsched}_{\cM}(\rawdiaplus\goal^2) - (p\cdot \mathbb{E}^{\sched}_{\cM}(\rawdiaplus\goal)+(1-p)\cdot \mathbb{E}^{\tsched}_{\cM}(\rawdiaplus\goal)
		)^2                                                                                                                                                                                                                                         \\
		{}={}  & p\cdot \mathbb{E}^{\sched}_{\cM}(\rawdiaplus\goal^2)+(1-p)\cdot \mathbb{E}^{\tsched}_{\cM}(\rawdiaplus\goal^2)                                                                                                                     \\
		       & - \Big(p^2\mathbb{E}^{\sched}_{\cM}(\rawdiaplus\goal)^2 + 2p(1-p)\mathbb{E}^{\sched}_{\cM}(\rawdiaplus\goal)\cdot \mathbb{E}^{\tsched}_{\cM}(\rawdiaplus\goal) + (1-p)^2\mathbb{E}^{\tsched}_{\cM}(\rawdiaplus\goal)^2\Big)        \\
		       & \underbrace{+ (p^2-p)\mathbb{E}^{\sched}_{\cM}(\rawdiaplus\goal)^2 - (p^2-p)\mathbb{E}^{\sched}_{\cM}(\rawdiaplus\goal)^2}_{=0}                                                                                                    \\
		       & \underbrace{+ ((1-p)^2-(1-p))\mathbb{E}^{\tsched}_{\cM}(\rawdiaplus\goal)^2 - ((1-p)^2-(1-p))\mathbb{E}^{\tsched}_{\cM}(\rawdiaplus\goal)^2}_{=0}                                                                                  \\
		{}={}  & p\cdot \mathbb{E}^{\sched}_{\cM}(\rawdiaplus\goal^2) - p\cdot \mathbb{E}^{\sched}_{\cM}(\rawdiaplus\goal)^2 +(1-p)\cdot \mathbb{E}^{\tsched}_{\cM}(\rawdiaplus\goal^2) - (1-p)\cdot \mathbb{E}^{\tsched}_{\cM}(\rawdiaplus\goal)^2 \\
		       & - 2p(1-p)\mathbb{E}^{\sched}_{\cM}(\rawdiaplus\goal)\cdot \mathbb{E}^{\tsched}_{\cM}(\rawdiaplus\goal)                                                                                                                             \\
		       & - (p^2-p)\mathbb{E}^{\sched}_{\cM}(\rawdiaplus\goal)^2 - ((1-p)^2-(1-p))\mathbb{E}^{\tsched}_{\cM}(\rawdiaplus\goal)^2                                                                                                             \\
		{}={}  & p\cdot \Var^{\sched}_{\cM}(\rawdiaplus\goal)+(1-p)\cdot \Var^{\tsched}_{\cM}(\rawdiaplus\goal)                                                                                                                                     \\
		       & -2p(1-p)\mathbb{E}^{\sched}_{\cM}(\rawdiaplus\goal)\cdot \mathbb{E}^{\tsched}_{\cM}(\rawdiaplus\goal)                                                                                                                              \\
		       & - p\cdot (p-1) \cdot \mathbb{E}^{\sched}_{\cM}(\rawdiaplus\goal)^2 - ((1-p)-1)\cdot(1-p) \cdot \mathbb{E}^{\tsched}_{\cM}(\rawdiaplus\goal)^2                                                                                      \\
		{}={}  & p\cdot \Var^{\sched}_{\cM}(\rawdiaplus\goal)+(1-p)\cdot \Var^{\tsched}_{\cM}(\rawdiaplus\goal)                                                                                                                                     \\
		       & + p\cdot (1-p) \cdot \Big(\mathbb{E}^{\sched}_{\cM}(\rawdiaplus\goal)^2 -2 \mathbb{E}^{\sched}_{\cM}(\rawdiaplus\goal)\cdot \mathbb{E}^{\tsched}_{\cM}(\rawdiaplus\goal)  + \mathbb{E}^{\tsched}_{\cM}(\rawdiaplus\goal)^2 \Big)   \\
		{}={}  & p\cdot \Var^{\sched}_{\cM}(\rawdiaplus\goal)+(1-p)\cdot \Var^{\tsched}_{\cM}(\rawdiaplus\goal) + p\cdot (1-p)\cdot (\mathbb{E}^{\sched}_{\cM}(\rawdiaplus\goal) - \mathbb{E}^{\tsched}_{\cM}(\rawdiaplus\goal))^2. \qedhere
	\end{align*}
\end{proof}

\lemmacontinuity*

\begin{proof}
As $\cM$ has no end components, the values $E_1\eqdef \max_{s\in S} \mathbb{E}^{\max}_{\cM,s}(\rawdiaplus \goal)$ and $E_2\eqdef \max_{s\in S} \mathbb{E}^{\max}_{\cM,s}(\rawdiaplus \goal^2)$ are finite (an explicit bound on the latter is also given  in Section \ref{sub:saturation}).
Furthermore, let $W$ be the maximal weight occurring in $\cM$, let $n$ be the number of states of $\cM$, and let $\delta$ be the minimal non-zero probability occurring in $\cM$. From any state, $\goal$ is reached within $n$ steps with probability at least $\delta^n$. Let $p\eqdef (1-\delta^n)$.
So, for any $k\in \mathbb{N}$, the probability that a weight of more than $n\cdot W \cdot k$ is accumulated under any scheduler is bounded by $p^n$.

We now show that $\mathbb{E}^\cdot_{\cM} (\rawdiaplus \goal)$ and $\mathbb{E}^\cdot_{\cM} (\rawdiaplus \goal^2)$ as functions from schedulers to real numbers are continuous in the sense of the lemma. The claim then follows immediately.

For the former, let $\varepsilon>0$ be given. Let $k_\varepsilon$ be such that $p^{k_\varepsilon} \cdot E_1 < \epsilon$. 
Now, let $\sched$ and $\tsched$ be two weight-based schedulers that agree on all state-weight pairs with weight at most $N_{\varepsilon}\eqdef k_{\varepsilon}\cdot n \cdot W$.
If a weight of more than $N_{\varepsilon}$ cannot be accumulated under $\sched$ and $\tsched$, there is nothing to show.
Otherwise,
\begin{align*}
 & | \mathbb{E}^{\sched}_{\cM}(\rawdiaplus \goal) - \mathbb{E}^{\tsched}_{\cM}(\rawdiaplus \goal) | \\
 {}\leq {}&
 \Pr^{\sched}_{\cM}(\rawdiaplus \goal >N_{\varepsilon})  \cdot | \mathbb{E}^{\sched}_{\cM}(\rawdiaplus \goal | \rawdiaplus \goal >N_{\varepsilon}) - 
 \mathbb{E}^{\tsched}_{\cM}(\rawdiaplus \goal | \rawdiaplus \goal >N_{\varepsilon}) | \\
{} \leq  {} & p^{k_{\varepsilon}} \cdot E_1 < \varepsilon.
\end{align*}

For the function $\mathbb{E}^\cdot_{\cM} (\rawdiaplus \goal^2)$, the claim follows analogously using the bound $E_2$.
\end{proof}

\lemmaRandomizationConvex*

In the proof, we use the following notation: Let  $\sched$ be a weight-based scheduler, $(s,w)$  a state-weight pair, and $\pi$ a path with $\last(\pi)=s$ and $\wgt(\pi)=w$. Then, $\after{\sched}{(s,w)}$ denotes the scheduler $\after{\sched}{\pi}$. As $\sched$ is weight-based, this definition does not depend on the choice of the path $\pi$.

\begin{proof}
    To obtain a candidate, for the value $p$, we first consider
    the expected frequency $\vartheta^{\sched}_{s,w}$, $\vartheta^{\sched_{\alpha}}_{s,w}$, and $\vartheta^{\sched_\beta}_{s,w}$,
    of the state-weight pair $(s,w)$ under $\sched$, $\sched_\alpha$, and $\sched_\beta$, respectively.
    Under all three schedulers, the probability $t>0$ that the state-weight pair $(s,w)$ is reached is the same.
    The expected frequencies now depend on the probability that a run returns to $(s,w)$ after leaving $(s,w)$.
    Let $r_{\alpha}$ be the probability that $\after{\sched_{\alpha}}{(s,w)}$ returns to state $s$ without accumulating additional weight, i.e., the probability that a run under $\after{\sched_{\alpha}}{(s,w)}$ starting in $s$ has a prefix $\pi$ of length $>1$ with $\last(\pi)=s$ and $\wgt(\pi)=0$.
    Let $r_{\beta}$ be defined analogously.
    The probability that $\sched$ returns from $(s,w)$ to this same state-weight pair is then $q\cdot r_{\alpha}+(1-q)\cdot r_{\beta}$.
    Note that $r_{\alpha}$ and $r_{\beta}$ are less than $1$ as all end components have negative maximal expected mean-payoff in $\cM$.
    We obtain the following expected frequencies of $(s,w)$:
    \begin{align*}
        \vartheta^{\sched_\alpha}_{s,w} & = t\frac{1}{1-r_{\alpha}}, \qquad  \vartheta^{\sched_\beta}_{s,w}= t\frac{1}{1-r_{\beta}}, \\
        \vartheta^{\sched}_{s,w}        & = t\frac{1}{1-q r_{\alpha}- (1-q) r_{\beta}}.
    \end{align*}
    The value $p$ has to satisfy $p\cdot \vartheta^{\sched_\alpha}_{s,w} + (1-p)\cdot \vartheta^{\sched_\beta}_{s,w}=\vartheta^{\sched}_{s,w}$ which is equivalent to
    \[
        \frac{1}{1-q r_{\alpha}- (1-q) r_{\beta}} = p\cdot \frac{1}{1-r_{\alpha}} +(1-p)\cdot \frac{1}{1-r_{\beta}}. \tag{$\diamond$}
    \]
    The value $\frac{1}{1-q r_{\alpha}- (1-q) r_{\beta}}$ lies between $ \frac{1}{1-r_{\alpha}}$ and $\frac{1}{1-r_{\beta}}$.
    If $r_\alpha=r_\beta$, any value $p\in [0,1]$ satisfies the equation. In this case, we also have $\vartheta_{s,w}^{\sched}=\vartheta_{s,w}^{\sched_\alpha}=\vartheta_{s,w}^{\sched_\beta}$.
    Otherwise, there is a unique value $p\in (0,1)$ satsifying ($\diamond$).

    Next, we have to check sure that also actions $\alpha$ and $\beta$ are chosen with the correct expected frequency in
    $(s,w)$ under $p\cdot \sched_\alpha \oplus (1-p)\cdot \sched_{\beta}$.
    Note that $\vartheta_{s,w,\alpha}^{\sched}=q\cdot \vartheta_{s,w}^{\sched}$ and that
    $\vartheta_{s,w,\alpha}^{\sched_\alpha}=\vartheta_{s,w}^{\sched_\alpha}$
    and $\vartheta_{s,w,\beta}^{\sched_\alpha}=0$, and analogously for $\sched_\beta$.

    If $r_{\alpha}=r_{\beta}$, we can simply choose $p=q$. If $r_{\alpha}\not = r_{\beta}$, we have to check that
    \[
        q\cdot  t\cdot \frac{1}{1-q r_{\alpha}- (1-q) r_{\beta}} =  p\cdot t \cdot \frac{1}{1-r_{\alpha}}.
    \]
   This equation follows from ($\diamond$) by basic arithmetic for $r_{\alpha}\not = r_{\beta}$ by multiplying out equation ($\diamond$) and dividing by $(r_{\alpha}-r_{\beta})$.

    So, indeed there exists a unique value $p$ such that the expected  frequencies of $(s,w)$ and of the expected number of times $\alpha$ and $\beta$, respectively, are chosen at $(s,w)$ agree under $\sched$ and $p\cdot \sched_\alpha \oplus (1-p)\cdot \sched_{\beta}$.
    As schedulers $\sched$, $\sched_{\alpha}$, and $\sched_{\beta}$ behave identically at all  state-weight pairs different to $(s,w)$,
    the expected frequencies of all other state weight pairs under $\sched$ and $p\cdot \sched_\alpha \oplus (1-p)\cdot \sched_{\beta}$ are  the same as well.
\end{proof}

\theoremsupremumdeterministic*

\begin{proof}
	Let $\sched$ be a scheduler that is not deterministic. By Corollary \ref{cor:weight-based}, we can assume w.l.o.g. that $\sched$ is weight-based. So, we view $\sched$ as a function from $S\times\mathbb{N}$ to probability distributions over actions.
	There is now a reachable state-weight pair $(s,w)$ at which $\sched$ schedules an action $\alpha$ with probability $0<q<1$. In a first step, we show that we can modify the scheduler such that it chooses $\alpha$ with probability $0$ or $1$ without decreasing the variance-penalized expectation.

	First, we suppose that the scheduler only chooses one other action $\beta$ at $(s,w)$ with positive probability to keep notation simpler. We explain how to treat the case with multiple actions afterwards.
	So, the  scheduler $\sched$ chooses $\alpha$ at $(s,w)$ with probability $q$ and  $\beta$ with probability $1-q$.
	Let $\sched_{\alpha}$ be the scheduler that agrees with $\sched$ on all state-weight pairs except for $(s,w)$ where it chooses $\alpha$ with probability $1$. Let $\sched_\beta$ be defined analogously.
	By Lemma \ref{lem:ranomization_convex}, there is a $p\in (0,1)$ such that $\sched$ and $p\cdot \sched_{\alpha} + (1-p)\cdot \sched_\beta$ lead to the same expected frequency of all state-weight pairs.
	Using Lemma \ref{lem:parabolic}, we obtain:
	\begin{align*}
		\VPE[\lambda]^{\sched}_{\cM}  = {} & \mathbb{E}^{\sched}_{\cM}(\rawdiaplus \goal) - \lambda \Var^{\sched}_{\cM}(\rawdiaplus \goal)                                           \\
		{} = {}                            & p\cdot ( \mathbb{E}^{\sched_\alpha}_{\cM}(\rawdiaplus \goal) - \lambda \Var^{\sched_\alpha}_{\cM}(\rawdiaplus \goal) )
		+ (1-p)\cdot ( \mathbb{E}^{\sched_\beta}_{\cM}(\rawdiaplus \goal) - \lambda \Var^{\sched_\beta}_{\cM}(\rawdiaplus \goal) )                                                   \\
		                                   & - \lambda \cdot p\cdot (1-p) (\mathbb{E}^{\sched_\alpha}_{\cM}(\rawdiaplus \goal)-\mathbb{E}^{\sched_\beta}_{\cM}(\rawdiaplus \goal))^2 \\
		{} \leq {}                         & p\cdot \VPE[\lambda]^{\sched_\alpha}_{\cM} + (1-p)\cdot \VPE[\lambda]^{\sched_\beta}_{\cM}.
	\end{align*}
	We conclude that $ \VPE[\lambda]^{\sched_\alpha}_{\cM} \geq  \VPE[\lambda]^{\sched}_{\cM} $ or $ \VPE[\lambda]^{\sched_\beta}_{\cM} \geq  \VPE[\lambda]^{\sched}_{\cM} $. So, indeed a scheduler that behaves like $\sched$ except at $(s,w)$ where it chooses $\alpha$ with probability $0$ or $1$ achieves at least the same variance-penalized expectation.

	We assumed for simplicity that the scheduler $\sched$ chooses only two actions with positive probability.
	In general,
	the scheduler $\sched$ might choose action $\alpha$ with probability $0<q<1$ and further  actions $\beta_1, \dots ,\beta_{\ell}\in \Act(\last(\pi))\setminus\{\alpha\}$ for some $\ell$ with positive probabilities $(1-q)\cdot \Delta(\beta_1),\dots,(1-q)\cdot \Delta(\beta_\ell)$ where $\Delta$ is a probability distribution over $ \Act(\last(\pi))\setminus\{\alpha\}$.
	The argument above still works if we  use $\beta$ as an abbreviation for choosing actions $\beta_1, \dots ,\beta_{\ell}\in \Act(\last(\pi))\setminus\{\alpha\}$ according to the probability distribution $\Delta$. Our proof then shows that at least one of the schedulers choosing $\alpha$ with probability $1$ or the remaining actions according to $\Delta$ does not decrease the variance-penalized expectation.
	In the latter case if $\ell>1$, the argument can successively be repeated by letting action $\beta_1$ take the role of $\alpha$. After at most $\ell$ changes to the scheduler, we find a scheduler that chooses one of the actions $\alpha$ and $\beta_1,\dots,\beta_{\ell}$ with probability $1$ at $(s,w)$ and otherwise behaves like $\sched$.

	To obtain a deterministic scheduler, we enumerate all (countably many) state-weight pairs $(s_0,w_0), (s_1,w_1), \dots$
	Let $\sched_0\eqdef \sched$. Once scheduler $\sched_i$ is defined, we let $\sched_{i+1}$ be a scheduler that makes a deterministic choice at the state-weight pair  $(s_i,w_i)$ and otherwise behaves like $\sched_i$ and that satisfies
	\[
		\VPE[\lambda]^{\sched_{i+1}}_{\cM}\geq \VPE[\lambda]^{\sched_i}.
	\]
	In the limit, we obtain a well-defined deterministic weight-based scheduler $\tsched$: The choice of $\tsched$ at any state-weight pair $(s_i,w_i)$ is given by
	$\sched_{i+1}(s_i,w_i)$ which is equal to $\sched_{j}(s_i,w_i)$ for all $j>i$.

	We claim that
	\[
		\VPE[\lambda]^{\tsched}_{\cM}\geq \VPE[\lambda]^{\sched}.
	\]
	To see this, let $\varepsilon>0$ be arbitrary and let $N_\varepsilon$ be the natural number as in Lemma \ref{lem:continuous}.
	There are only finitely many state-weight pairs $(s,w)$ with $w\leq N_\varepsilon$. Let $k_\varepsilon$ be a natural number such that all state-weight pairs $(s,w)$ with $w\leq N_\varepsilon$ occur before $(s_{k_{\varepsilon}},w_{k_{\varepsilon}})$ in our enumeration.
	Then $\sched_{k_{\varepsilon}}$ and $\tsched$ agree on all all state-weight pairs $(s,w)$ with $w\leq N_\varepsilon$. By Lemma \ref{lem:continuous}, we conclude that
	\[
		\VPE[\lambda]^{\tsched}_{\cM}\geq \VPE[\lambda]^{\sched_{k_{\varepsilon}}}_{\cM}-\varepsilon \geq  \VPE[\lambda]^{\sched}_{\cM}-\varepsilon .
	\]
	As $\varepsilon$ was arbitrary, this implies that  $\VPE[\lambda]^{\tsched}_{\cM}\geq \VPE[\lambda]^{\sched}$.
\end{proof}

\subsection{Hardness of the threshold problem}\label{app:hardness}

\theoremhardnessVPE*

\begin{proof}
	We reduce from the following problem which is shown to be EXPTIME-hard in \cite{HaaseKiefer15}:
	Given an MDP $\cM$ and a natural number $T>0$ such that $\goal$ is reached in~$\cM$ almost surely under all schedulers, decide whether
	there is a scheduler $\sched$ such that $\Pr^{\sched}_{\cM}(\rawdiaplus \goal{=T})=1$.
	Observe that~$\cM$ does not contain any end-components.

	Given such an MDP $\cM$ and a value $T>0$, we
	define the MDP~$\cM'$ by adding a fresh initial state $\iota$ from which a unique action (with weight~$0$) leads, with probability $1/2$, to
	the initial state of~$\cM$;
	while with probability $1/2$ it leads to a fresh state $\iota'$. From~$\iota'$, a unique action with weight~$T$ leads to $\goal$.

	Further, we let $\vartheta\eqdef T$.
	Let~$W$ be the largest weight in the MDP, and~$n$ the number of states,
	and $p_{\min}$ the smallest probability in~$\cM$.
	Let~$\varepsilon<p_{\min}^n$.

	We establish an upper bound on the expectation of $\rawdiaplus \goal$ as follows.
	Since all schedulers are proper, under all schedulers, every~$n$ steps, the process reaches $\goal$ with probability at least~$\varepsilon$.
	We have, for all $\sched$,
	\begin{align}
		\mathbb{E}_{\cM'}^{\sched}(\rawdiaplus \goal ) & \leq W \cdot \mathbb{E}_{\cM'}^{\sched}(\text{steps until $\goal$})                    \nonumber     \\
		                                               & = W \sum_{k\geq 0} k \Pr_{\cM'}^{\sched}(\text{steps until $\goal$} =k) \nonumber        \\
		                                               & \leq W \sum_{k\geq 0} k (1-\varepsilon)^{\lfloor k/n\rfloor}            \nonumber  \\
		                                               & \leq W \sum_{l\geq 0} n(l+n) (1-\varepsilon)^l                           \nonumber \\
		                                               & \leq n W (\frac{n}{\varepsilon} + \frac{1-\varepsilon}{\varepsilon^2})
		\label{eqn:exptime-expect-bound}
	\end{align}
	where we used the fact that under~$\sched$, from any state, there is a path of length~$n$ to~$\goal$, with probability~$\geq \varepsilon$.
	Let us denote $f(n,W,\varepsilon) = n W (\frac{n}{\varepsilon} + \frac{1-\varepsilon}{\varepsilon^2})$ which we can assume to be larger than $1$.
	Define
	\[
		\lambda \eqdef 18 f(n,W,\varepsilon)
	\]
	Notice that this number can be computed in polynomial time.

	\medskip
	We claim that there exists a scheduler $\sched$ with $\Pr^{\sched}_{\cM}(\rawdiaplus\goal=T)=1$
	iff
	there exists $\sched'$ with $\VPE[\lambda]^{\sched'}_{\cM'} \geq \vartheta$.

	A scheduler $\sched$ with $\Pr^{\sched}_{\cM}(\rawdiaplus\goal=T)=1$ viewed as a scheduler for $\cM^\prime$ satisfies
	$\mathbb{E}^{\sched}_{\cM^\prime} (\rawdiaplus \goal )=T$ and $\Var^{\sched}_{\cM^\prime} (\rawdiaplus \goal )=0$. Hence,
	$\VPE[\lambda]^\sched_{\cM^\prime}=T=\vartheta$.

	Let $\sched$ be a deterministic scheduler for $\cM^\prime$ that maximizes $\VPE[\lambda]^{\sched}_{\cM^\prime}$,
	and assume that $\VPE[\lambda]^{\sched}_{\cM^\prime} \geq \vartheta$.
	Such a scheduler exists by the previous Theorem \ref{thm:determinist_scheduler} and can be viewed as a scheduler for $\cM$ as well.

	Let us write~$p = \Pr^{\sched}_{\cM}(\rawdiaplus \goal = T)$,
	and $\mu^\sched = \mathbb{E}_{\cM}^{\sched}(\rawdiaplus\goal)$.

	By assuming~$p < 1$, we will reach a contradiction.

	\medskip

	We first show that $|T - \mu^\sched| \leq \frac{1}{3}$.
	In fact, observe that
	\begin{equation}
		\label{eqn:exptime-split-variance}
		\mathbb{V}_{\cM'}^{\sched}(\rawdiaplus\goal) = \frac{1}{2}(T - \mu^\sched)^2 + \frac{1}{2}\mathbb{E}_{\cM}^{\sched}((\rawdiaplus\goal-\mu^\sched)^2),
	\end{equation}
	where the first term corresponds to the path $\iota,\iota',\goal$, and the second term to all other paths.

	As $\mathbb{E}^{\sched}_{\cM^\prime}(\rawdiaplus \goal) \leq f(n,W,\varepsilon)$, we obtain
	\(
	\VPE[\lambda]^{\sched}_{\cM^\prime} \leq f(n,W,\varepsilon) - \lambda \cdot \frac{1}{2}(T-\mu^\sched)^2.
	\)
	So if $|T-\mu^\sched| > \frac{1}{3}$, then $\VPE[\lambda]^{\sched}_{\cM^\prime} \leq f(n,W,\varepsilon) - \frac{1}{18}\lambda<0$,
	which is a contradiction.

	\medskip
	Now, let us write
	\[
		\mathbb{E}_{\cM^\prime}^{\sched}(\rawdiaplus \goal)=\frac{1}{2}T + \frac{1}{2}p T
		+ \frac{1}{2}\sum_{k \neq T} k \Pr_{\cM}^{\sched}(\rawdiaplus\goal = k),
	\]
	where the first term corresponds to the path via $\iota,\iota',\goal$;
	the second term is the set of paths that enter~$\cM$ and achieve~$T$, and the last term
	is the contribution of all other paths.

	For the variance, let us focus on the right term of~\eqref{eqn:exptime-split-variance}.

	\begin{align}
		\mathbb{E}_{\cM}^{\sched}((\rawdiaplus\goal - \mu^\sched)^2)
		 & = \Pr^{\sched}_{\cM}(\rawdiaplus\goal = T) \mathbb{E}^{\sched}_{\cM}((\rawdiaplus\goal -\mu^\sched)^2 \mid \rawdiaplus \goal = T) \nonumber             \\
		 & \qquad + \Pr^{\sched}_{\cM}(\rawdiaplus\goal \neq T)\mathbb{E}^{\sched}_{\cM}((\rawdiaplus\goal -\mu^\sched)^2 \mid \rawdiaplus \goal \neq T) \nonumber \\
		 & = p( T - \mu^\sched)^2 +
		(1-p)\mathbb{E}^{\sched}_{\cM}((\rawdiaplus -\mu^\sched)^2 \mid \rawdiaplus \goal \neq T) \nonumber                                                               \\
		 & = p(T-\mu^\sched)^2 + \sum_{l \neq T} (l-\mu^\sched)^2 \Pr^{\sched}_{\cM}(\rawdiaplus \goal = l).
		\label{eqn:exptime-var}
	\end{align}
	Here, the right hand side of the last line is obtained as follows.
	\[
		\begin{array}{ll}
			\mathbb{E}^{\sched}_{\cM}((\rawdiaplus\goal -\mu^\sched)^2 \mid \rawdiaplus \goal \neq T) & = \sum_{k \geq 0 } k \Pr_{\cM}^{\sched}((\rawdiaplus\goal - \mu^\sched)^2=k \mid \rawdiaplus \goal \neq T)  \\
			                                                                                          & = \sum_{l \geq 0} (l - \mu^\sched)^2 \Pr_{\cM}^{\sched}(\rawdiaplus\goal = l \mid \rawdiaplus \goal \neq T) \\
			                                                                                          & = \sum_{l \neq T} (l - \mu^\sched)^2 \Pr_{\cM}^{\sched}(\rawdiaplus\goal = l \mid \rawdiaplus \goal \neq T) \\
			                                                                                          & = \frac{1}{1-p} \sum_{l \neq T} (l - \mu^\sched)^2 \Pr_{\cM}^{\sched}(\rawdiaplus\goal = l).
		\end{array}
	\]
	So, we rewrite \eqref{eqn:exptime-split-variance} as follows.
	\begin{align*}
		\mathbb{V}_{\cM^\prime}^{\sched}(\rawdiaplus \goal) & = \frac{1+p}{2}(T - \mu^\sched)^2
		+ \frac{1}{2}\sum_{k \neq T} (k-\mu^\sched)^2 \Pr_{\cM}^{\sched}(\rawdiaplus\goal = k). \\
	\end{align*}

	Let us compute the VPE:
	\begin{align*}
		\VPE[\lambda]^{\sched}_{\cM^\prime} & =\mathbb{E}_{\cM^\prime}^{\sched}(\rawdiaplus \goal) - \lambda \mathbb{V}_{\cM^\prime}^{\sched}(\rawdiaplus \goal)                                                   \\
		                                    & = \frac{1+p}{2}T - \frac{\lambda(1+p)}{2}(T-\mu^\sched)^2 + \frac{1}{2}\sum_{k \neq T} (k - \lambda (k-\mu^\sched)^2) \Pr_{\cM}^{\sched}(\rawdiaplus\goal =k)
	\end{align*}
	To reach a contradiction, it suffices to show that the above is less than~$T$, which is equivalent to
	\begin{equation}
		\label{eqn:exptime-vpe-case2}
		- \lambda (1+p)(T-\mu^\sched)^2 + \sum_{k \neq T} (k - \lambda (k-\mu^\sched)^2) \Pr_{\cM}^{\sched}(\rawdiaplus\goal =k) < (1-p)T.
	\end{equation}
	The function $k \mapsto (k - \lambda (k-\mu^\sched)^2)$ is increasing until $k=\mu^\sched + \frac{1}{2\lambda}$ and decreasing afterwards;
	so its maximum at integers $k \neq T$ is reached either at $T-1$ or~$T+1$ since $|T- \mu^\sched| \leq \frac{1}{3}$.
	For~$k=T-1$, we have
	\begin{align*}
		 & - \lambda (1+p)(T-\mu^\sched)^2 +  (T-1 - \lambda (T-1-\mu^\sched)^2) \sum_{k \neq T} \Pr_{\cM}^{\sched}(\rawdiaplus\goal =k) <(1-p)T \\
		 & \Leftrightarrow -\lambda (1+p)(T-\mu^\sched)^2 + (1-p)T -(1 + \lambda(T-1-\mu^\sched)^2)(1-p) < (1-p)T                                       \\
		 & \Leftrightarrow -\lambda (1+p)(T-\mu^\sched)^2 -(1 + \lambda(T-1-\mu^\sched)^2)(1-p) < 0.                                                    \\
	\end{align*}
	For $k=T+1$, we have
	\begin{align*}
		 & - \lambda (1+p)(T-\mu^\sched)^2 +  (T+1 - \lambda (T+1-\mu^\sched)^2) \sum_{k \neq T} \Pr_{\cM}^{\sched}(\rawdiaplus\goal =k) <(1-p)T \\
		 & \Leftrightarrow -\lambda (1+p)(T-\mu^\sched)^2 + (1-p)(T+1-\lambda(T+1-\mu^\sched)^2) < (1-p)T                                               \\
		 & \Leftrightarrow -\lambda (1+p)(T-\mu^\sched)^2 + (1-p)(1 - \lambda(T+1-\mu^\sched)^2) < 0.                                                   \\
	\end{align*}
	Here, $(T+1-\mu^\sched)^2\geq \frac{4}{9}$ since $|T-\mu^\sched|\leq \frac{1}{3}$,
	so $1 - \lambda(T+1-\mu^\sched)^2 < 0$ since $\lambda > \frac{9}{4}$.
	This establishes \eqref{eqn:exptime-vpe-case2}, yielding a contradiction.

        Addressing the second claim:
        For acyclic MDPs, the problem we reduce from is shown to be PSPACE-hard in  \cite{HaaseKiefer15}. As our construction preserves acyclicity, 
        PSPACE-hardness for the threshold problem for VPE in acyclic MDPs follows as above.
\end{proof}

\subsection{Saturation point}

\theoremsaturation*

\begin{proof}

	Let $\sched$ be a scheduler with $\VP[\lambda]^{\sched}_{\cM} = \VP[\lambda]^{\max}_{\cM} $.
	Suppose there is a $\sched$-path $\pi^\prime$ with $\wgt(\pi^\prime)\geq K$ such that
	\[
		\mathbb{E}^{\after{\sched}{\pi^\prime}}_{\cM,\last(\pi^\prime)} (\rawdiaplus \goal) > \mathbb{E}^{\min}_{\cM,\last(\pi^\prime)} (\rawdiaplus \goal).
	\]
	Then, there must be an $\sched$-path $\pi$ that extends $\pi^{\prime}$ such that $\sched$ chooses an action $\alpha\not\in \Act^{\min}(\last(\pi))$ with positive probability. Let us write $s\eqdef \last(\pi)$ and define
	\[p\eqdef P(\pi)\cdot \sched(\pi)(\alpha).
	\]
	So, $p$ is the probability that $\pi$ is seen under $\sched$ and that $\sched$ chooses $\alpha$ afterwards.
	As $\wgt(\pi)\geq K \geq B_{1/2}$, we conclude that $p\leq 1/2$.

	We claim that we can construct a scheduler $\sched^\prime$ with $\VP[\lambda]^{\sched^\prime}_{\cM}>\VP[\lambda]^{\sched}_{\cM}$.
	Let $\Min$ be a memoryless deterministic scheduler with
	\[
		\mathbb{E}^{\Min}_{\cM,s}(\rawdiaplus\goal) = \mathbb{E}^{\min}_{\cM,s}(\rawdiaplus\goal).
	\]
	We define the scheduler $\sched^\prime$ to behave like $\sched$ unless $\pi$ is seen and $\sched$ chooses $\alpha$ after $\pi$. In this case, $\sched^\prime$ switches to the behavior of $\Min$ instead.

	To compare the schedulers $\sched$ and $\sched^\prime$, let us define $\tsched$ to be the residual scheduler of $\sched$ after $\pi$ when $\sched$ chooses $\alpha$.
	I.e., extending the notation for residual schedulers, we define $\tsched\eqdef \after{\sched}{(\pi\alpha)}$ where
	\[
		\after{\sched}{(\pi\alpha)}(s)(\alpha)=1,
	\]
	i.e., on the path only consisting of state $s$, the scheduler chooses $\alpha$ with probability $1$, and for all finite paths $\rho$ starting with $s$ followed by $\alpha$,
	\[
		\after{\sched}{(\pi\alpha)}(\rho)=\sched(\pi\circ \rho).
	\]

	So, the schedulers $\sched$ and $\sched^\prime$ agree on all paths except for the extensions of $\pi$ in case $\sched$ chooses $\alpha$ after $\pi$. In this case, $\sched$ behaves like $\tsched$ and $\sched^\prime$ behaves like $\Min$.

	Let now $X$ be the event that $\sched$ does not choose $\alpha$ after $\pi$. In particular, $X$ contains all paths that do not have $\pi$ as a prefix.
	Conditioned on the event $X$, $\sched$ and $\sched^\prime$ behave identically. Furthermore, the probability of $X$ is $1-p$ under both schedulers.
	This allows us to split the expected value of $\rawdiaplus\goal$ under both schedulers conditioning on $X$ and its complement $\bar{X}$. We get
	\begin{align*}
		\mathbb{E}^{\sched}_{\cM,\sinit}(\rawdiaplus\goal)
		 & =(1-p) \mathbb{E}^{\sched}_{\cM,\sinit}(\rawdiaplus\goal\mid X) + p\cdot \mathbb{E}^{\sched}_{\cM,\sinit}(\rawdiaplus\goal\mid \bar{X})    \\
		 & =(1-p) \mathbb{E}^{\sched}_{\cM,\sinit}(\rawdiaplus\goal\mid X) + p\cdot(\wgt(\pi)+\mathbb{E}^{\tsched}_{\cM,s}(\rawdiaplus\goal)) \tag{5}
	\end{align*}
	and
	\begin{align*}
		\mathbb{E}^{\sched^\prime}_{\cM,\sinit}(\rawdiaplus\goal)
		 & =(1-p) \mathbb{E}^{\sched^\prime}_{\cM,\sinit}(\rawdiaplus\goal\mid X) + p\cdot \mathbb{E}^{\sched^\prime}_{\cM,\sinit}(\rawdiaplus\goal\mid \bar{X}) \\
		 & =(1-p) \mathbb{E}^{\sched}_{\cM,\sinit}(\rawdiaplus\goal\mid X) + p\cdot(\wgt(\pi)+\mathbb{E}^{\Min}_{\cM,s}(\rawdiaplus\goal)) \tag{6}
	\end{align*}
	where we use that $\sched$ and $\sched^\prime$ behave identically on $X$ in the last equality. Let us denote the term that does not depend on $\tsched$ or $\Min$ and occurs in both equations (5) and (6) by
	\[
		C_1\eqdef (1-p) \mathbb{E}^{\sched}_{\cM,\sinit}(\rawdiaplus\goal\mid X).
	\]
	Note that $C_1\leq \mathbb{E}^{\sched}_{\cM,\sinit}(\rawdiaplus\goal) \leq U_1$.

	As a direct consequence of (5) and (6), we also get
	\begin{align*}
		      & (\mathbb{E}^{\sched}_{\cM,\sinit}(\rawdiaplus\goal))^2                            \\
		=\,\, & C_1^2 +2C_1\cdot p\cdot(\wgt(\pi)+\mathbb{E}^{\tsched}_{\cM,s}(\rawdiaplus\goal))
		+ p^2\cdot(\wgt(\pi)+\mathbb{E}^{\tsched}_{\cM,s}(\rawdiaplus\goal))^2 \tag{7}
	\end{align*}
	and
	\begin{align*}
		      & (\mathbb{E}^{\sched^\prime}_{\cM,\sinit}(\rawdiaplus\goal))^2                  \\
		=\,\, & C_1^2 +2C_1\cdot p\cdot(\wgt(\pi)+\mathbb{E}^{\Min}_{\cM,s}(\rawdiaplus\goal))
		+ p^2\cdot(\wgt(\pi)+\mathbb{E}^{\Min}_{\cM,s}(\rawdiaplus\goal))^2. \tag{8}
	\end{align*}

	Applying the same reasoning as above to the random variable $\rawdiaplus \goal^2$, we obtain
	\begin{align*}
		\mathbb{E}^{\sched}_{\cM,\sinit}(\rawdiaplus\goal^2)
		 & =(1-p) \mathbb{E}^{\sched}_{\cM,\sinit}(\rawdiaplus\goal^2\mid X) + p\cdot \mathbb{E}^{\sched}_{\cM,\sinit}(\rawdiaplus\goal^2\mid \bar{X})    \\
		 & =(1-p) \mathbb{E}^{\sched}_{\cM,\sinit}(\rawdiaplus\goal^2\mid X) + p\cdot\mathbb{E}^{\tsched}_{\cM,s}((\wgt(\pi)+\rawdiaplus\goal)^2) \tag{9}
	\end{align*}
	and
	\begin{align*}
		\mathbb{E}^{\sched^\prime}_{\cM,\sinit}(\rawdiaplus\goal^2)
		 & =(1-p) \mathbb{E}^{\sched^\prime}_{\cM,\sinit}(\rawdiaplus\goal^2\mid X) + p\cdot \mathbb{E}^{\sched^\prime}_{\cM,\sinit}(\rawdiaplus\goal^2\mid \bar{X}) \\
		 & =(1-p) \mathbb{E}^{\sched}_{\cM,\sinit}(\rawdiaplus\goal^2\mid X) +  p\cdot\mathbb{E}^{\Min}_{\cM,s}((\wgt(\pi)+\rawdiaplus\goal)^2). \tag{10}
	\end{align*}
	Again, we abbreviate the first term by
	\[
		C_2\eqdef (1-p) \mathbb{E}^{\sched}_{\cM,\sinit}(\rawdiaplus\goal^2\mid X)
	\]
	and note that $C_2\leq\mathbb{E}^{\sched}_{\cM,\sinit}(\rawdiaplus\goal^2) \leq U_2$.
	Further extending equations (9) and (10) and using the linearity of the expected value, we obtain
	\begin{align*}
		\mathbb{E}^{\sched}_{\cM,\sinit}(\rawdiaplus\goal^2)
		 & = C_2 + p \cdot(\wgt(\pi)^2 + 2\cdot \wgt(\pi) \cdot \mathbb{E}^{\tsched}_{\cM,s}(\rawdiaplus\goal) + \mathbb{E}^{\tsched}_{\cM,s}(\rawdiaplus\goal^2))
		\tag{11}
	\end{align*}
	and
	\begin{align*}
		\mathbb{E}^{\sched^\prime}_{\cM,\sinit}(\rawdiaplus\goal^2)
		 & = C_2 + p \cdot(\wgt(\pi)^2 + 2\cdot \wgt(\pi) \cdot \mathbb{E}^{\Min}_{\cM,s}(\rawdiaplus\goal) + \mathbb{E}^{\Min}_{\cM,s}(\rawdiaplus\goal^2)).
		\tag{12}
	\end{align*}

	The key to showing our claim now  lies in the fact that $\tsched$ starts in state $s$ by choosing actions $\alpha\not\in \Act^{\min}(s)$. So,
	\[
		\mathbb{E}^{\tsched}_{\cM,s}(\rawdiaplus\goal) \geq \mathbb{E}^{\Min}_{\cM,s}(\rawdiaplus\goal) + \delta. \tag{13}
	\]

	Putting everything together, we will now show that indeed $\VP[\lambda]^{\sched^\prime}_{\cM}>\VP[\lambda]^{\sched}_{\cM}$:

	\begin{align*}
		          & \VP[\lambda]^{\sched^\prime}_{\cM} - \VP[\lambda]^{\sched}_{\cM}                                                                                                                    \\
		= \,\,    & \mathbb{E}^{\sched^\prime}_{\cM,\sinit}(\rawdiaplus\goal) - \mathbb{E}^{\sched}_{\cM,\sinit}(\rawdiaplus\goal)                                                                      \\
		          & - \lambda \cdot (\mathbb{E}^{\sched^\prime}_{\cM,\sinit}(\rawdiaplus\goal^2) - \mathbb{E}^{\sched}_{\cM,\sinit}(\rawdiaplus\goal^2)  )                                              \\
		          & + \lambda \cdot ((\mathbb{E}^{\sched^\prime}_{\cM,\sinit}(\rawdiaplus\goal))^2 - (\mathbb{E}^{\sched}_{\cM,\sinit}(\rawdiaplus\goal))^2 )                                           \\
		= \,\,    & p\cdot (\mathbb{E}^{\Min}_{\cM,s}(\rawdiaplus\goal) - \mathbb{E}^{\tsched}_{\cM,s}(\rawdiaplus\goal))                                                                               \\
		          & -  \lambda \cdot p \cdot \big(2\cdot \wgt(\pi) \cdot \mathbb{E}^{\Min}_{\cM,s}(\rawdiaplus\goal) + \mathbb{E}^{\Min}_{\cM,s}(\rawdiaplus\goal^2)                                    \\
		          & \phantom{ -  \lambda \cdot p \cdot - - -}        -2\cdot \wgt(\pi) \cdot \mathbb{E}^{\tsched}_{\cM,s}(\rawdiaplus\goal) - \mathbb{E}^{\tsched}_{\cM,s}(\rawdiaplus\goal^2)\big)     \\
		          & + \lambda\cdot \big( 2C_1 \cdot p \cdot  (\mathbb{E}^{\Min}_{\cM,s}(\rawdiaplus\goal) - \mathbb{E}^{\tsched}_{\cM,s}(\rawdiaplus\goal))                                             \\
		          & \phantom{+ \lambda\cdot \big(} + p^2 \cdot (   (\wgt(\pi)+\mathbb{E}^{\Min}_{\cM,s}(\rawdiaplus\goal))^2 - (\wgt(\pi)+\mathbb{E}^{\tsched}_{\cM,s}(\rawdiaplus\goal))^2      )\big) \\
		=\,\,     & p\cdot (\mathbb{E}^{\Min}_{\cM,s}(\rawdiaplus\goal) - \mathbb{E}^{\tsched}_{\cM,s}(\rawdiaplus\goal))                                                                               \\
		          & -  \lambda \cdot p \cdot \big(\mathbb{E}^{\Min}_{\cM,s}(\rawdiaplus\goal^2)-\mathbb{E}^{\tsched}_{\cM,s}(\rawdiaplus\goal^2)\big)                                                   \\
		          & + 2 \lambda \cdot p \cdot \wgt(\pi) \cdot \big( \mathbb{E}^{\tsched}_{\cM,s}(\rawdiaplus\goal) -  \mathbb{E}^{\Min}_{\cM,s}(\rawdiaplus\goal) \big)                                 \\
		          & + 2 \lambda\cdot p \cdot    (\mathbb{E}^{\Min}_{\cM,s}(\rawdiaplus\goal) - \mathbb{E}^{\tsched}_{\cM,s}(\rawdiaplus\goal))                                                          \\
		          & + 2\lambda\cdot p^2  \cdot   \wgt(\pi) \cdot \big( \mathbb{E}^{\Min}_{\cM,s}(\rawdiaplus\goal) - \mathbb{E}^{\tsched}_{\cM,s}(\rawdiaplus\goal) \big)                               \\
		          & + \lambda\cdot p^2 \cdot \big( (\mathbb{E}^{\Min}_{\cM,s}(\rawdiaplus\goal))^2 -  (\mathbb{E}^{\tsched}_{\cM,s}(\rawdiaplus\goal))^2 \big)                                          \\
		\geq \,\, & -p\cdot U_1  - \lambda \cdot p \cdot U_2  - 2\lambda \cdot p \cdot U_1  - \lambda\cdot p^2 \cdot U_1^2                                                                              \\
		          & +2 \lambda \cdot p \cdot \wgt(\pi) \cdot \big( \mathbb{E}^{\tsched}_{\cM,s}(\rawdiaplus\goal) -  \mathbb{E}^{\Min}_{\cM,s}(\rawdiaplus\goal) \big)                                  \\
		          & - 2\lambda\cdot p^2  \cdot   \wgt(\pi) \cdot \big( \mathbb{E}^{\tsched}_{\cM,s}(\rawdiaplus\goal) - \mathbb{E}^{\Min}_{\cM,s}(\rawdiaplus\goal) \big)                               \\
		\geq\,\,  & -p\cdot U_1  - \lambda \cdot p \cdot U_2  - 2\lambda \cdot p \cdot U_1  - \lambda\cdot p^2 \cdot U_1^2                                                                              \\
		          & +  \lambda \cdot p \cdot \wgt(\pi) \cdot \big( \mathbb{E}^{\tsched}_{\cM,s}(\rawdiaplus\goal) -  \mathbb{E}^{\Min}_{\cM,s}(\rawdiaplus\goal) \big). \tag{14}
	\end{align*}
	In the last inequality, we use that $p\leq 1/2$. To show that the right hand side is greater than $0$, we use that $\mathbb{E}^{\tsched}_{\cM,s}(\rawdiaplus\goal) -  \mathbb{E}^{\Min}_{\cM,s}(\rawdiaplus\goal)\geq \delta$ and first obtain that
	\begin{align*}
		          & -p\cdot U_1  - \lambda \cdot p \cdot U_2  - 2\lambda \cdot p \cdot U_1  - \lambda\cdot p^2 \cdot U_1^2                                                   \\
		          & +  \lambda \cdot p \cdot \wgt(\pi) \cdot \big( \mathbb{E}^{\tsched}_{\cM,s}(\rawdiaplus\goal) -  \mathbb{E}^{\Min}_{\cM,s}(\rawdiaplus\goal) \big)       \\
		\geq \,\, & -p\cdot U_1  - \lambda \cdot p \cdot U_2  - 2\lambda \cdot p \cdot U_1  - \lambda\cdot p^2 \cdot U_1^2 +   \lambda \cdot p \cdot \wgt(\pi) \cdot \delta.
	\end{align*}
	Now,
	\[
		-p\cdot U_1  - \lambda \cdot p \cdot U_2  - 2\lambda \cdot p \cdot U_1  - \lambda\cdot p^2 \cdot U_1^2 +   \lambda \cdot p \cdot \wgt(\pi) \cdot \delta > 0
	\]
	is equivalent to
	\[
		\lambda  \cdot \wgt(\pi) \cdot \delta >   U_1 + \lambda U_2 + 2\lambda U_1 + \lambda \cdot p \cdot U_1^2.
	\]
	As $\wgt(\pi)\geq K$ and $p\leq 1/2$, it is sufficient to show that
	\[
		\lambda  \cdot K \cdot \delta >   U_1 + \lambda U_2 + 2\lambda U_1 + \lambda \cdot 1/2 \cdot U_1^2.
	\]
	This now, however follows directly from the definition of $K$ as
	\[
		\lambda  \cdot K \cdot \delta \geq U_1 + \lambda U_2 + 2\lambda U_1 + \lambda \cdot 1/2 \cdot U_1^2 + \lambda\cdot \delta.
	\]
	This shows that the scheduler $\sched$ does not maximize $\VP[\lambda]^{\sched}_{\cM}$.
	This finishes the proof that the defined value $K$, which is computable in polynomial time as argued before, is as desired.
	
	Note that the estimations (14) and hence the whole proof works analogously for the
	variant $\sup_\sched -\mathbb{E}^{\sched}_{\cM}(\rawdiaplus \goal) - \lambda\cdot \Var^{\sched}_{\cM}(\rawdiaplus \goal)$
	as claimed in Remark \ref{rem:minimal_expectation_VPE}.
\end{proof}

\theoremoptimalaboveK*

\begin{proof}
	We will show that after each $\sched$-path $\pi$ with $\wgt(\pi)\geq K$ behaving according to $\vsched$ instead of $\after{\sched}{\pi}$ does not decrease the variance-penalized expectation. Applying this reasoning successively to all  $\sched$-paths that reach a weight level of at least $K$ in their last step leads us to the desired scheduler $\tsched$.

	So, let $\pi$ be a $\sched$-path with $\wgt(\pi)\geq K$ ending in a state $s$.
	By Theorem \ref{thm:saturation}, we know that
	\[
		\mathbb{E}^{\after{\sched}{\pi}}_{\cM,s}(\rawdiaplus \goal) = \mathbb{E}^{\vsched}_{\cM,s}(\rawdiaplus \goal)
	\]
	and by the definition of $\vsched$,
	\[
		\Var^{\after{\sched}{\pi}}_{\cM,s}(\rawdiaplus \goal) \geq \Var^{\vsched}_{\cM,s}(\rawdiaplus \goal).
	\]
	As in previous proofs of this section, we express the variance-penalized expectation of $\sched$ by conditioning on the event $\Pi$ that $\pi$ is a prefix of a run and its complement $\neg \Pi$. Note that by the definition of $K$ and the fact that $\pi$ is a $\sched$-path, the probability $p\eqdef\Pr^{\sched}_{\cM}(\pi)$ lies strictly between $0$ and $1$. We obtain
	\begin{align*}
		\VPE[\lambda]^{\sched}_{\cM} = {} & \mathbb{E}^{\sched}_{\cM}(\rawdiaplus \goal) + \lambda\cdot (\mathbb{E}^{\sched}_{\cM}(\rawdiaplus \goal))^2 - \lambda\cdot \mathbb{E}^{\sched}_{\cM}(\rawdiaplus \goal^2)                                        \\
		{}={}                             &
		\mathbb{E}^{\sched}_{\cM}(\rawdiaplus \goal)
		+ \lambda\cdot (\mathbb{E}^{\sched}_{\cM}(\rawdiaplus \goal))^2                                                                                                                                                                                       \\
		                                  &
		- \lambda\cdot \big( (1-p)\mathbb{E}^{\sched}_{\cM}(\rawdiaplus \goal^2\mid \neg \Pi) + p \mathbb{E}^{\after{\sched}{\pi}}_{\cM,s} ((\wgt(\pi)+\rawdiaplus \goal)^2)\big)                                                                             \\
		{}={}                             &
		\mathbb{E}^{\sched}_{\cM}(\rawdiaplus \goal)
		+ \lambda\cdot (\mathbb{E}^{\sched}_{\cM}(\rawdiaplus \goal))^2                                                                                                                                                                                       \\
		                                  &
		- \lambda\cdot (1-p)\mathbb{E}^{\sched}_{\cM}(\rawdiaplus \goal^2\mid \neg \Pi)                                                                                                                                                                       \\
		                                  & - \lambda\cdot p \cdot  \big(  \wgt(\pi)^2 + 2\wgt(\pi)\cdot  \mathbb{E}^{\after{\sched}{\pi}}_{\cM,s} (\rawdiaplus \goal)+ \mathbb{E}^{\after{\sched}{\pi}}_{\cM,s} (\rawdiaplus \goal^2)\big). \tag{$\times$}
	\end{align*}
	Let now $\sched^\prime$ be the scheduler that behaves like $\sched$ after all paths that do not have $\pi$ as a prefix and that behaves like $\vsched$ as soon as $\pi$ has been seen. By the observations on $\after{\sched}{\pi}$ and $\vsched$ above, we know that
	$\mathbb{E}^{\sched}_{\cM}(\rawdiaplus\goal)=\mathbb{E}^{\sched^\prime}_{\cM}(\rawdiaplus\goal)$. Furthermore, conditioned on $\neg \Pi$, the two schedulers behave identically.
	With the same calculations for $\sched^\prime$ as in ($\times$) using that $\after{\sched^\prime}{\pi}=\vsched$, we get that
	\[
		\VPE[\lambda]^{\sched^\prime}_{\cM}\geq \VPE[\lambda]^{\sched}_{\cM}
	\]
	if and only if
	\[
		- \lambda\cdot p \cdot \mathbb{E}^{\vsched}_{\cM,s} (\rawdiaplus \goal^2) \geq - \lambda\cdot p \cdot \mathbb{E}^{\after{\sched}{\pi}}_{\cM,s} (\rawdiaplus \goal^2)
	\]
	This, however, follows directly from
	\begin{align*}
		\mathbb{E}^{\vsched}_{\cM,s} (\rawdiaplus \goal^2) = {} & \Var^{\vsched}_{\cM,s}(\rawdiaplus\goal) + (\mathbb{E}^{\vsched}_{\cM,s} (\rawdiaplus \goal))^2                         \\
		{}={}                                                   & \Var^{\vsched}_{\cM,s}(\rawdiaplus\goal) + (\mathbb{E}^{\after{\sched}{\pi}}_{\cM,s} (\rawdiaplus \goal))^2             \\
		{}\leq{}                                                & \Var^{\after{\sched}{\pi}}_{\cM,s}(\rawdiaplus\goal) + (\mathbb{E}^{\after{\sched}{\pi}}_{\cM,s} (\rawdiaplus \goal))^2 \\
		{}={}                                                   & \mathbb{E}^{\after{\sched}{\pi}}_{\cM,s} (\rawdiaplus \goal^2).
	\end{align*}
	Enumerating all $\sched$-paths $\pi_1,\pi_2,\dots$ that reach a weight of at least $K$ in their last step, we can now recursively define a sequence of schedulers $\sched_i$ starting from $\sched_1=\sched$ with non-decreasing variance-penalized expectation such that $\sched_i$ behaves like $\vsched$ after all paths $\pi_j$ with $j<i$. By continuity arguments as before, we obtain a scheduler $\tsched$ in the limit that behaves like $\vsched$ as soon as a weight of at least $K$ has been accumulated and that satisfies
	\[
		\VPE[\lambda]^{\tsched}_{\cM}\geq\VPE[\lambda]^{\sched}_{\cM}.
	\]
	By the optimality of $\sched$, the new scheduler $\tsched$ maximizes the variance-penalized expectation, too.
\end{proof}

\end{appendix}


\end{document}